\let\amsmarkboth\markboth    %%%%%%%%Bug tetex 3.0 !!!!!
\let\markboth\amsmarkboth
   \def\bbl@arg{#1}%
   \def\bbl@arg{#2}%
   \edef\bbl@tempa{\the\toks@}%
   \edef\bbl@tempb{\the\toks8}%
   \protected@edef\bbl@tempa{%
     \noexpand\org@markboth{\bbl@tempa}{\bbl@tempb}}%
\DeclareRobustCommand*\ams@disablelinebreak{\def\\{ \ignorespaces}}
\def\maketitle{\par
   \@topnum\z@ %
   \@setcopyright
   \thispagestyle{firstpage}%
   \uppercasenonmath\shorttitle
   \ifx\@empty\shortauthors \let\shortauthors\shorttitle
   \else \andify\shortauthors
   \fi
   \@maketitle@hook
   \begingroup
   \@maketitle
   \toks@\@xp{\shortauthors}\@temptokena\@xp{\shorttitle}%
   \protected@edef\@tempa{%
     \@nx\markboth{\ams@disablelinebreak
       \@nx\MakeUppercase{\the\toks@}}{\the\@temptokena}}%
   \@tempa
   \endgroup
   \c@footnote\z@
   \@cleartopmattertags
}
\numberwithin{equation}{section}
\newcommand{\N}{{\mathbb N}}
\newcommand{\R}{{\mathbb R}}
\newcommand{\RR}{{\mathbb R}^2}
\newtheorem{theorem}{Theorem}[section]
\newtheorem{proposition}[theorem]{Proposition}
\newtheorem{lemma}[theorem]{Lemma}
\newtheorem{corollary}[theorem]{Corollary}
\theoremstyle{definition}
\newtheorem{definition}[theorem]{Definition}
\theoremstyle{remark}
\newtheorem{remark}[theorem]{Remark}
\newcommand{\dt}{\partial_t}
\newcommand{\eps}{\varepsilon}
\newcommand{\fe}{f_\eps}
\newcommand{\loc}{\text{loc}}
\begin{document}

%% title page

\title[Gyrokinetic limit]{The gyrokinetic limit for the  Vlasov-Poisson system with a point charge}

\author[Evelyne Miot]{Evelyne Miot}
\address[E. Miot]{CNRS - Institut Fourier, UMR 5582\\ Universit\'e Grenoble-Alpes, France}  \email{evelyne.miot@univ-grenoble-alpes.fr}

\subjclass[2010]{Primary 35Q83; Secondary  35A02, 35A05, 35A24}
\keywords{Vlasov-Poisson, gyrokinetic limit, Euler equation}

\date{\today}

\begin{abstract}
We consider the asymptotics of large external magnetic field for a 2D Vlasov-Poisson system governing the evolution of a bounded density interacting with a point charge. 
We show that the solution converges to a measure-valued solution of the Euler equation with a defect measure.

\end{abstract}

\maketitle

\section{Introduction and main results}

\subsection{The gyrokinetic limit for the Vlasov-Poisson system with a point charge}

In this paper, we consider the asymptotical behavior of the solutions of a Vlasov-Poisson type system as $\eps$ tends to zero:
\begin{equation}
\label{syst:VP-bis}
\begin{cases}
\displaystyle \dt \fe+\frac{v}{\eps}\cdot \nabla_x \fe+\left(\frac{v^\perp}{\eps^2}+\frac{E_\eps}{\eps}+\frac{\gamma}{\eps} 
\frac{x-\xi_\eps}{|x-\xi_\eps|^2}\right)\cdot \nabla_v \fe=0,\quad (x,v)\in \R^2\times \R^2\\
\displaystyle E_\eps=\frac{x}{|x|^2}\ast \rho_\eps,\quad \text{where } \displaystyle \rho_\eps(t,x)=\int_{\R^2}\fe(t,x,v)\,dv\\
\displaystyle \dot{\xi}_\eps(t)=\frac{\eta_\eps(t)}{\eps},\\
\displaystyle \dot{\eta}_\eps(t)=\gamma\left(\frac{\eta_\eps^\perp(t)}{\eps^2}+\frac{E_\eps(t,\xi_\eps(t))}{\eps}\right),
\end{cases}
\end{equation}
with the initial conditions
\begin{equation}
\label{syst:ini}
\fe(0,x,v)=f_{\eps}^0(x,v),\quad (\xi_\eps,\eta_\eps)(0)=(\xi_\eps^0,\eta_\eps^0).
\end{equation}
Here, the real number $\gamma>0$ does not depend on $\eps$.
For each $\eps>0$, this system describes the interaction of a two-dimensional distribution of light 
particles (a plasma) and a heavy point charge $\gamma$, which are submitted
to a large and constant external magnetic field, orthogonal to the plane. More precisely, the distribution of particles is represented by the positive and bounded function $\fe=\fe(t,x,v)$, the point charge is located
at $\xi_\eps(t)$, with velocity $\eta_\eps(t)$. The particles are submitted to the self-consistent electric field $E_\eps$ on the one hand, and to the magnetic field
represented by the terms $v^\perp/\eps^2$ or $\eta_\eps^\perp/\eps^2$ on the other hand (here, $(x_1,x_2)^\perp=(-x_2,x_1)$).

For fixed $\eps>0$, the Cauchy theory for weak solutions of the classical Vlasov-Poisson system, namely \eqref{syst:VP-bis} without charge nor magnetic field, has been settled in several works \cite{Arsenev, ukai, LP, loeper}. 
Then, the Vlasov-Poisson system without magnetic field but with a point charge  was introduced by Caprino and Marchioro \cite{caprino-marchioro} (with $\eps=1$). For initial data satisfying
\begin{equation}\begin{split}
 \label{hyp:ini-0-bis}&f_\eps^0\in L^1\cap L^\infty(\RR\times \RR),\: f_\eps^0\geq 0,\:f_\eps^0 \text{  is compactly supported, } \\ 
 &\text{supp}(f_\eps^0)\subset \{(x,v)\in \RR \times \RR\:|\: |x-\xi_\eps^0|\geq \delta_\eps\}\quad \text{for some $\delta_\eps>0$},\end{split}\end{equation} 
global existence and uniqueness of a solution $(f_\eps,\xi_\eps)$ 
 with $f_\eps\in L^\infty(\R_+,L^1\cap L^\infty( \RR\times \RR))$ compactly supported was established in \cite{caprino-marchioro}. 
 We also refer to \cite{italiens-miot} for a related existence result in the case of attractive 
interaction between the plasma and the charge (namely if $\gamma<0$). This result can be easily extended to  \eqref{syst:VP-bis} with magnetic field, for each fixed $\eps>0$.

 The purpose of this paper is to investigate the asymptotics of \eqref{syst:VP-bis} for large external magnetic field, which corresponds to the limit $\eps$ tends to zero. We will show that 
 under suitable bounds on the initial data, the sequence $(\rho_\eps,\xi_\eps)$ is relatively compact for some suitable topology on measures and we will show in 
Theorem \ref{thm:main} that any
  accumulation point  $(\rho,\xi)$ satisfies the Euler equation \eqref{NLE}, with a defect measure. Furthermore, when the defect measure vanishes and under more regularity assumptions on $\rho$,  \eqref{NLE} yields a coupled system consisting in 
  a PDE for the evolution of $\rho$ and an ODE for the evolution of $\xi$:\begin{equation}
\label{syst:VW}
\begin{cases}
\displaystyle \dt \rho+\left(E^\perp+ \gamma \frac{(x-\xi)^\perp}{|x-\xi|^2}\right)\cdot \nabla \rho=0\\
\displaystyle \dot{\xi}(t)= E^\perp(t,\xi(t)),\quad E= \frac{ x}{|x|^2}\ast \rho.
\end{cases}
\end{equation}

\medskip

Before stating these theorems in subsection \ref{subsec:results}, we summarize the state of the art for the case without charge. The system \eqref{syst:VW} reduces then to the 2D incompressible Euler equation in vorticity formulation for the function $\rho$:
\begin{equation}
\label{eq:Euler}
\displaystyle \dt \rho+E^\perp\cdot \nabla \rho=0,\quad E= \frac{ x}{|x|^2}\ast \rho.
\end{equation}
In the periodic setting without charge,  Golse and Saint-Raymond \cite{golse-sr}, then Saint-Raymond \cite{SR-02} and also Brenier \cite{brenier} established 
the convergence of \eqref{syst:VP-bis} to the incompressible Euler equation under suitable assumptions on the initial data (see later). 
The same kind of result was recently obtained in \cite{miot-16}  by different techniques. Moreover, several asymptotical regimes 
for linear or non linear Vlasov-like equations, leading to various nonlinear equations,
were investigated in the articles \cite{frenod-sonnendrucker-98, frenod-sonnendrucker-99, frenod-sonnendrucker-01, golse-sr-2, SR-01, han-kwan, hauray-nouri}, and more recently 
in \cite{bostan-finot-hauray,barre-chiron-masmoudi}. Recently, the numerical  issues  
were studied by Filbet and Rodrigues \cite{filbet-rodrigues}, wo constructed an asymptotic-preserving
scheme for the Vlasov-Poisson system in the limit of large external magnetic field.
\medskip

We now turn to the system \eqref{syst:VW} also called vortex-wave system.
%\label{syst:TVW}
It was introduced by Marchioro and Pulvirenti \cite{marchioro-pulvirenti}, who established global existence and uniqueness 
of the solution such that $\rho\in L^\infty(\R_+,L^1\cap L^\infty(\RR))$ and $\xi\in C^1(\R_+)$ never intersects the support of $\rho$. It was later further 
analyzed in, e.g., \cite{lacave-miot, bresiliens-miot}. We will discuss below the possibility of giving a sense to \eqref{syst:VW}, 
or to \eqref{eq:Euler}, when $\rho$ is a measure. Our definition \ref{def:poupaud} below, borrowed from previous works, allows 
to handle \emph{vortex sheets}, namely measure-valued densities $\rho(t)$ belonging to $H^{-1}$.

% is only assumed to be a bounded positive meaConcerning the more general formulation given by Definition \ref{def:poupaud}, a global existenceresult with $\nu=0$ and $\omega=\rho+\gamma \delta_{\xi}$, with $\omega\in L^\infty_{\loc}(\R_+,\mathcal{M}_+\cap H^{-1}(\RR))$ and $\xi\in C^{1/2}(\R_+,\RR)$, was given in \cite{miot-parme}.

 \subsection{Some notations}
 
 Throughout this paper,

\textbullet \, For $\Omega=\RR,$ $\Omega=\RR \times\RR$ or $\Omega=\mathbb{S}^1\times \RR$, $\mathcal{M}(\Omega)$ denotes the space of bounded real Radon measures and
 $\mathcal{M}_+(\Omega)$ the space of bounded, positive Radon measures on $\Omega$, $C_0(\Omega)$ the space of continous functions vanishing at infinity on $\Omega$. 
 We say that $\rho \in C_w(\R_+, \mathcal{M}_+(\Omega))$ if $\rho(t)\in \mathcal{M}_+(\Omega)$ for all $t\in \R_+$ and if moreover, $t\mapsto 
\int_{\Omega} \Phi(x)\,d\rho(t,x)$ is continous, for all $\Phi\in C_0(\Omega)$. The sequence $(\rho_n)_{n\in \N}$ is said to converge to $\rho$ in $C_w(\R_+,\mathcal{M}_+(\Omega)$ if
for all $T>0$ and for all $\Phi \in C_0(\Omega)$ we have
$\sup_{t\in [0,T]}\int_{\Omega}\Phi(x)(d\rho_n(t,x)-d\rho(t,x))\to 0$ as $n\to +\infty$. The sequence $(\rho_n)$ is said to converge to 
$\rho$ in $L^\infty(\R_+,\mathcal{M}_+(\Omega))$ weak - $\ast$ if for all $\Phi\in L^1(\R_+,C_0(\Omega))$ we have $\int_{\R_+}\int_{\Omega}\Phi(t,x)(d\rho_n(t,x)-d\rho(t,x))\to 0$
 as $n\to +\infty$.

\textbullet \,  For $A$, $B\in \mathcal{M}_{2,2}(\R)$ we set $A:B=\sum_{i,j}A_{i,j}B_{i,j}$ and for $x=(x_1,x_2)\in \RR$ we set $x\otimes x=x^tx=
\begin{pmatrix}x_1^2 & x_1x_2\\ x_1x_2 & x_2^2\end{pmatrix}. $ 

\textbullet \,  Except in the last section, $C$ denotes a constant changing possibly from a line to another, depending only on the uniform bounds on the initial data.

\subsection{Statements of the results}\label{subsec:results}
As already mentioned, the limits of the solutions of \eqref{syst:VP-bis} arising as $\eps\to 0$ are measure-valued.
In order to take into account such singular objects, we need to reexpress the nonlinear term $E^\perp\cdot \nabla \rho=\nabla\cdot(E^\perp \rho)$ in the sense of distributions. 
The formulation below, and some of its variants,
was  introduced by Schochet \cite{Schochet}, Delort \cite{Delort} or Poupaud \cite{poupaud} in the setting of weak solutions of the 2D Euler equation.
\begin{definition}[\cite{poupaud}, Def. 4.9]
 \label{def:poupaud}
Let $\rho,\mu \in \mathcal{M}_+(\R^2)$. For all $\Phi\in C_c^\infty(\R^2)$, we set
$$\mathcal{H}_\Phi[\rho,\mu]=\frac{1}{2}\iint_{\R^2\times \R^2} H_\Phi(x,y)d\rho(x)\,d\mu(y),$$
where
$$H_\Phi(x,y)=
\frac{(x-y)^\perp}{|x-y|^2}\cdot \left( \nabla \Phi(x)-\nabla \Phi(y)\right)\quad \text{if }x\neq y, \quad H_\Phi(x,x)=0.$$
\end{definition}

\begin{remark}

 \label{rem:bound} The map $(x,y)\mapsto H_\Phi(x,y)$ is defined and continuous off the diagonal $\Delta=\{(x,x)\:|\:x\in \RR\}$. It is also bounded on 
 $\RR\times \RR$ by the mean-value theorem,  hence the formulation above makes sense for $\rho$ and $\mu$ as in Definition \ref{def:poupaud}.

\end{remark}
% \begin{theorem}
%  \label{thm:main-3}
The motivation of this definition is based on the following proposition, which is obtained by symmetrization of the variables $x$ and $y$. 
\begin{proposition}[\cite{poupaud,Delort, Schochet}] \label{prop:symm}In Definition \ref{def:poupaud} assume moreover that the measure $\rho$ belongs to $L^p(\RR)$ for some $p>2$. Then we have, recalling $E=\frac{x}{|x|^2}$,
$$\langle \nabla\cdot (E^\perp \rho),\Phi\rangle_{\mathcal{D}'(\RR),\mathcal{D}(\RR)}=- \mathcal{H}_{\Phi}[\rho,\rho].$$
 
\end{proposition}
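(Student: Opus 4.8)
The plan is to unwind the distributional pairing, represent $E^\perp$ as a convolution integral, and then exploit the antisymmetry of the kernel $\frac{z^\perp}{|z|^2}$ under $z\mapsto -z$ by symmetrizing in the two integration variables. Concretely, I would first use the definition of the distributional divergence together with $E=\frac{x}{|x|^2}\ast\rho$ to write
\[
\langle \nabla\cdot(E^\perp\rho),\Phi\rangle
=-\int_{\RR} E^\perp(x)\cdot\nabla\Phi(x)\,\rho(x)\,dx
=-\iint_{\RR\times\RR}\frac{(x-y)^\perp}{|x-y|^2}\cdot\nabla\Phi(x)\,\rho(x)\rho(y)\,dx\,dy,
\]
where in the last step I have inserted $E^\perp(x)=\int_{\RR}\frac{(x-y)^\perp}{|x-y|^2}\rho(y)\,dy$.

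The second step is the symmetrization. Relabelling $x\leftrightarrow y$ in the double integral and using $\frac{(y-x)^\perp}{|y-x|^2}=-\frac{(x-y)^\perp}{|x-y|^2}$ shows that the same quantity equals $+\iint \frac{(x-y)^\perp}{|x-y|^2}\cdot\nabla\Phi(y)\,\rho(x)\rho(y)\,dx\,dy$. Averaging these two expressions for one and the same integral produces
\[
\langle \nabla\cdot(E^\perp\rho),\Phi\rangle
=-\frac12\iint_{\RR\times\RR}\frac{(x-y)^\perp}{|x-y|^2}\cdot\bigl(\nabla\Phi(x)-\nabla\Phi(y)\bigr)\,\rho(x)\rho(y)\,dx\,dy,
\]
in which I recognize precisely $H_\Phi(x,y)$ from Definition \ref{def:poupaud}; the right-hand side is therefore exactly $-\mathcal{H}_\Phi[\rho,\rho]$.

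The one point that genuinely uses the hypothesis $\rho\in L^p$ with $p>2$, and which I expect to be the main (if modest) obstacle, is the justification that all these integrals converge absolutely, so that the integration by parts, the application of Fubini's theorem, and the relabelling $x\leftrightarrow y$ are all legitimate despite the singularity of the kernel on the diagonal $\Delta$. I would establish this by bounding the Riesz-type potential $x\mapsto\int_{\RR}\frac{\rho(y)}{|x-y|}\,dy$: splitting the $y$-integral into $|x-y|\le 1$ and $|x-y|>1$, applying H\"older's inequality with the conjugate exponent $p'<2$ on the first piece (where $|x-y|^{-1}\in L^{p'}_{\loc}$ exactly because $p>2$) and using $\rho\in L^1$ on the second, which yields $E^\perp\in L^\infty(\RR)$. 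Since $\nabla\Phi$ is compactly supported, the double integral $\iint \frac{1}{|x-y|}\,|\nabla\Phi(x)|\,\rho(x)\rho(y)\,dx\,dy$ is then finite, legitimizing both Fubini and the change of variables. It is worth emphasizing the contrast with the symmetrized quantity $\mathcal{H}_\Phi$, which is well defined for arbitrary measures thanks to the boundedness of $H_\Phi$ recorded in Remark \ref{rem:bound}: the unsymmetrized integral only makes classical sense under this extra integrability, and reconciling the two is exactly the content of the proposition.
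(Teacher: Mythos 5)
Your proof is correct and follows exactly the route the paper indicates: the paper offers no written proof (it cites Poupaud, Delort, Schochet and notes only that the identity ``is obtained by symmetrization of the variables $x$ and $y$''), and your argument is precisely that symmetrization, supplemented with the correct H\"older justification (using $|x-y|^{-1}\in L^{p'}_{\mathrm{loc}}$ for $p'<2$, i.e.\ $p>2$) that makes the unsymmetrized integral absolutely convergent so Fubini and the relabelling are legitimate.
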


\medskip

We clarify now our assumptions on the initial data. To $f\in L^1$, $\rho=\int f\,dv$, $\xi$ and $\eta\in \RR$ we associate the energy
\begin{equation*}\begin{split}
&\mathcal{H}(f,\xi,\eta)
=\frac{1}{2}\iint_{\R^2\times \R^2}|v|^2f(x,v)\,dx\,dv+\frac{1}{2}|\eta|^2\\
&-\frac{1}{2}\iint_{\R^2\times \RR}
\ln|x-y|\rho(x)\rho(y)\,dx\,dy
- \gamma \int_{\R^2}\ln|x-\xi|\rho(x)\,dx,
\end{split}
\end{equation*}
and the momentum
\begin{equation*}
\mathcal{I}(f,\xi,\eta)=\int_{\RR}\left( |x+\eps v^\perp|^2-\eps^2|v|^2\right)f(x,v)\,dx\,dv+\gamma |\xi+\frac{\eps}{\gamma}\eta^\perp|^2-\frac{\eps^2}{\gamma^2}|\eta|^2.
\end{equation*}As we shall later see, the energy and the momentum are preserved by the solutions of \eqref{syst:VP-bis} that are considered in this paper.
\medskip

Here we  restrict our attention to $\gamma>0$ and to initial data satisfying \eqref{hyp:ini-0-bis} for each $\eps>0$.  Moreover 
we assume the following behavior of the norms as $\eps \to 0$:
\begin{equation}\label{hyp:ini-1-bis}\begin{split}
&\sup_{0<\eps<1} \left( \|f_\eps^0\|_{L^1}+\int_{\RR}|x|^2\rho_\eps^0(x)\,dx+|\xi_\eps^0| \right)<+\infty,\\
&\sup_{0<\eps<1} \mathcal{H}(f_\eps^0,\xi_\eps^0,\eta_\eps^0)<+\infty,
\end{split}
\end{equation}
and 
\begin{equation}
\label{hyp:ini-2-bis}
 \eps^2 \|f_\eps^0\|_{L^\infty} \to 0,\quad \text{as }\eps\to 0.
\end{equation} Finally, we add the condition\footnote{More generally, the condition is
$\sup_{0<\eps<1}\|f_\eps^0\|_{L^1}<|\sigma|$, where $\sigma$ is such that $E=\sigma x/|x|^2\ast \rho$.}
:
\begin{equation}\label{hyp:ini-small} \sup_{0<\eps<1}\|f_\eps^0\|_{L^1}<1.\end{equation}

Our main result can now be stated as follows.
\begin{theorem}
 \label{thm:main}
Let $(\fe^0,\xi_\eps^0,\eta_\eps^0)$ satisfy \eqref{hyp:ini-0-bis}, \eqref{hyp:ini-1-bis}, \eqref{hyp:ini-2-bis} and \eqref{hyp:ini-small}. 
Let $(\fe,\xi_\eps)$ denote the corresponding global weak solution of \eqref{syst:VP-bis}. There exists a subsequence $\eps_n\to 0$ as $n\to +\infty$ such that

\textbullet \,  $(\rho_{\eps_n})$ converges to $\rho$ in $C_w(\R_+,\mathcal{M}_+(\RR))$ and
 $(\xi_{\eps_n})$ converges to $\xi$ in $C^{1/2}([0,T],\RR)$ for all $T>0$;

\textbullet \,    $\rho\in  L^\infty(\R_+,H^{-1}(\RR))$; 

 \textbullet \,  There exists a defect measure  $\nu\in [L^\infty(\R_+,\mathcal{M}(\RR)]^4$ such that $(\rho,\xi)$ satisfies: for all $\Phi\in C_c^\infty(\R_+\times \RR)$,
\begin{equation}\tag{E}\label{NLE}\begin{split}
\frac{d}{dt}\int_{\RR} \Phi(t,x) d(\rho(t) +\gamma \delta_{\xi(t)})(x)&=\int_{\RR} \partial_t \Phi(t,x) d(\rho(t)+\gamma\delta_{\xi(t)})(x)\\+\mathcal{H}_{\Phi(t)}[\rho
+ \gamma \delta_{\xi},\rho&
+\gamma \delta_{\xi}]
+\int_{\RR} D\nabla^\perp \Phi(t,x):d\nu(t,x)
\end{split}
\end{equation}
in the sense of distributions on $\R_+$.

\end{theorem}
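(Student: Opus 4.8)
The plan is to follow the standard strategy for such singular limits: extract uniform bounds from the conserved energy and momentum, use them to get compactness of $(\rho_\eps,\xi_\eps)$, rewrite the evolution in a weak form whose singular terms have been symmetrized à la Delort--Poupaud, and finally pass to the limit.

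\textbf{A priori estimates.} First I would establish, uniformly in $\eps\in(0,1)$, the bounds that drive everything. Mass is conserved, $\|\rho_\eps(t)\|_{L^1}=\|f^0_\eps\|_{L^1}<1$. The energy and momentum must be exploited jointly: $\mathcal H$ contains indefinite logarithmic terms whose negative part is controlled, via a logarithmic Hardy--Littlewood--Sobolev inequality, by the mass (subcritical thanks to \eqref{hyp:ini-small}) and the second moment, while $\mathcal I$ controls $\int_{\RR}|x|^2\rho_\eps\,dx+\gamma|\xi_\eps|^2$ up to $O(\eps)$ corrections bounded by the kinetic energy. Closing this loop yields simultaneously the uniform kinetic-energy bound $\sup_t\int|v|^2 f_\eps\le C$, tightness of $\rho_\eps$, $\sup_t|\xi_\eps|\le C$, and $\sup_t\|\rho_\eps(t)\|_{H^{-1}}\le C$ (equivalently $\|E_\eps\|_{L^2}\le C$, since $\rho_\eps=\tfrac1{2\pi}\nabla\cdot E_\eps$). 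Passing to the weak-$*$ limit and using lower semicontinuity of the $H^{-1}$ norm then gives the second bullet, $\rho\in L^\infty(\R_+,H^{-1})$.

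\textbf{Compactness.} For $\rho_\eps$ the issue is time-equicontinuity, since the continuity equation $\dt\rho_\eps=-\frac1\eps\nabla_x\cdot j_\eps$, $j_\eps=\int v f_\eps\,dv$, carries an $O(1/\eps)$ current. The key is to eliminate the fast scale using the momentum balance of the Vlasov equation: testing against $v\,\Phi(x)$ lets one solve for $\tfrac1\eps j_\eps^\perp$ and obtain, after one rotation,
\begin{equation*}
\frac1\eps\,j_\eps=(E_\eps\,\rho_\eps)^\perp+\gamma\Big(\tfrac{x-\xi_\eps}{|x-\xi_\eps|^2}\Big)^\perp\rho_\eps-\big(\nabla_x\cdot\mathbb M_\eps\big)^\perp-\eps\,\dt\Big(\textstyle\int v f_\eps\,dv\Big)^\perp,
\end{equation*}
where $\mathbb M_\eps=\int v\otimes v\,f_\eps\,dv$ is bounded in $L^\infty(\R_+,\mathcal M)$ by the kinetic energy. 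All terms on the right are bounded after testing against $\nabla\Phi$, the last one carrying a prefactor $\eps$; this gives uniform equicontinuity of $t\mapsto\int\Phi\,d\rho_\eps$ and, with tightness, convergence in $C_w(\R_+,\mathcal M_+)$ along a subsequence. For $\xi_\eps$ the same manipulation on the charge ODE gives $\dot\xi_\eps=E_\eps^\perp(\xi_\eps)-\frac\eps\gamma\dot\eta_\eps^\perp$; since $\eta_\eps$ is bounded the last term is $O(\eps)$, and a uniform bound $\int_0^T|E_\eps(\xi_\eps(t))|^2\,dt\le C$ yields $|\xi_\eps(t)-\xi_\eps(s)|\le C|t-s|^{1/2}+C\eps$, hence $C^{1/2}$ compactness.

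\textbf{Weak formulation, symmetrization, and the main obstacle.} Inserting the representation of $\tfrac1\eps j_\eps$ into the continuity equation and integrating by parts gives, for $\Phi\in C_c^\infty(\R_+\times\RR)$,
\begin{equation*}
\frac{d}{dt}\!\int\!\Phi\,\rho_\eps=\int\!\dt\Phi\,\rho_\eps+\int\!\nabla^\perp\Phi\cdot E_\eps\,\rho_\eps+\gamma\!\int\!\nabla^\perp\Phi\cdot\tfrac{x-\xi_\eps}{|x-\xi_\eps|^2}\rho_\eps-\!\int\!D(\nabla^\perp\Phi):\mathbb M_\eps+O(\eps),
\end{equation*}
while the charge ODE yields $\frac{d}{dt}\gamma\Phi(\xi_\eps)=\gamma\!\int\dt\Phi\,d\delta_{\xi_\eps}+\gamma\,\nabla^\perp\Phi(\xi_\eps)\cdot E_\eps(\xi_\eps)+O(\eps)$. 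Adding the two identities, I would use Proposition \ref{prop:symm} to rewrite the plasma self-interaction as $\mathcal H_\Phi[\rho_\eps,\rho_\eps]$, and symmetrize the two cross terms (plasma advected by the charge, charge advected by the plasma field) into $2\gamma\,\mathcal H_\Phi[\rho_\eps,\delta_{\xi_\eps}]$; since $H_\Phi(\xi,\xi)=0$ the charge self-interaction vanishes, so the whole singular part is exactly $\mathcal H_{\Phi}[\rho_\eps+\gamma\delta_{\xi_\eps},\rho_\eps+\gamma\delta_{\xi_\eps}]$, and defining $\nu$ as a weak-$*$ limit of $\mathbb M_\eps$ identifies the defect term of \eqref{NLE}. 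In the passage to the limit the $O(\eps)$ remainders disappear and the linear terms converge directly; the crux is the singular terms. For the plasma self-interaction I would invoke the Delort--Schochet concentration-cancellation: because $\rho_\eps\ge0$ is bounded in $H^{-1}$ and $\rho_\eps\rightharpoonup\rho$, one has $\mathcal H_\Phi[\rho_\eps,\rho_\eps]\to\mathcal H_\Phi[\rho,\rho]$, so positivity and the $H^{-1}$ bound rule out a \emph{spatial} defect and the only defect left is the kinetic one carried by $\nu$. For the cross term I would use $\delta_{\xi_\eps}\to\delta_\xi$ together with the boundedness and off-diagonal continuity of $x\mapsto H_\Phi(x,\xi)$. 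I expect this last point, and the attendant uniform bound on $E_\eps(\xi_\eps(t))$, to be the main obstacle: it requires quantitative Caprino--Marchioro-type estimates showing that the repulsion $\gamma>0$ keeps $\mathrm{supp}\,\rho_\eps$ at a controlled distance from $\xi_\eps$ uniformly as $\eps\to0$, despite $\|\rho_\eps\|_{L^\infty}$ being allowed to grow under \eqref{hyp:ini-2-bis}.
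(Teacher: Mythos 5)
Your skeleton (energy/momentum bounds, elimination of the fast current through the moment equation, symmetrization of all singular terms into $\mathcal{H}_\Phi[\rho_\eps+\gamma\delta_{\xi_\eps},\rho_\eps+\gamma\delta_{\xi_\eps}]$, then concentration--cancellation) is indeed the paper's strategy, but two of your steps have genuine gaps. The first is the $C^{1/2}$-compactness of $\xi_\eps$: you rest it on a uniform bound $\int_0^T|E_\eps(t,\xi_\eps(t))|^2\,dt\le C$ which you never prove, and which is not available; the only field-at-the-charge estimate obtainable here (Corollary \ref{coro:virial-3}, proved via a virial-type inequality and the smallness assumption \eqref{hyp:ini-small}) is $\int_s^t|E_\eps(\tau,\xi_\eps(\tau))|\,d\tau\le C\eps+C(t-s)/\eps$, which degenerates as $\eps\to0$ and serves only to absorb remainders carrying $\eps^2$ prefactors. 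Your proposed remedy---Caprino--Marchioro-type uniform separation of $\supp\rho_\eps$ from $\xi_\eps$---cannot work, since \eqref{hyp:ini-0-bis} allows $\delta_\eps\to0$ and \eqref{hyp:ini-2-bis} allows $\|f_\eps^0\|_{L^\infty}\to\infty$. The paper's route is entirely different: once the weak formulation has been symmetrized (Proposition \ref{prop:weak-formulation-3}), the singular interaction appears only through the \emph{bounded} functional $\mathcal{H}_\Phi$, and the H\"older estimate $|\xi_\eps(t)-\xi_\eps(s)|\le K_0((t-s)^{1/2}+\eps^{1/3})$ (Corollary \ref{coro:holder}) follows by contradiction, testing with a bump function of width $\sim K_1\bigl((t-s)^{1/2}+\eps^{1/3}\bigr)$ centered at $\xi_\eps(s)$ and invoking Majda's non-concentration estimate (Proposition \ref{prop:delort-2}) to show the plasma cannot supply the unit of mass needed for the charge to escape the bump. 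No pointwise or $L^2_t$ control of $E_\eps(\xi_\eps)$ is ever needed; that is precisely why one symmetrizes \emph{before} estimating. (The same non-concentration property, not boundedness and off-diagonal continuity of $H_\Phi(\cdot,\xi)$ alone, is what allows passage to the limit in the cross term $\mathcal{H}_\Phi[\rho_{\eps_n},\gamma\delta_{\xi_{\eps_n}}]$, since $H_\Phi$ is discontinuous on the diagonal.)

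The second gap is that your charge identity $\frac{d}{dt}\gamma\Phi(\xi_\eps)=\gamma\partial_t\Phi(\xi_\eps)+\gamma\nabla^\perp\Phi(\xi_\eps)\cdot E_\eps(\xi_\eps)+O(\eps)$ is false: the discarded term is not $O(\eps)$. From $\gamma\dot\xi_\eps=\gamma E_\eps^\perp(\xi_\eps)-\eps\dot\eta_\eps^\perp$, the contribution $-\eps\int_0^t\dot\eta_\eps^\perp\cdot\nabla\Phi(\xi_\eps)\,ds$ must be integrated by parts because $\dot\eta_\eps=O(\eps^{-2})$; the boundary terms are $O(\eps)$, but the bulk term contains $\eps\int_0^t\eta_\eps^\perp\cdot\bigl[D\nabla\Phi(\xi_\eps)\,\dot\xi_\eps\bigr]ds$ with $\dot\xi_\eps=\eta_\eps/\eps$, i.e.\ the $O(1)$ quantity
\begin{equation*}
\int_0^t\eta_\eps^\perp\cdot\bigl[D\nabla\Phi(s,\xi_\eps(s))\,\eta_\eps(s)\bigr]ds
=-\int_0^t\eta_\eps(s)\cdot\bigl[D\nabla^\perp\Phi(s,\xi_\eps(s))\,\eta_\eps(s)\bigr]ds,
\end{equation*}
which is exactly the extra term kept in Proposition \ref{prop:weak-formulation-3}. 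Since $\eta_{\eps_n}\otimes\eta_{\eps_n}$ converges only weak-$\ast$ (to the coefficients $\alpha,\beta$ of Theorem \ref{thm:main-structure}), this term survives in the limit and produces the Dirac part of $\nu$ at $\xi(t)$. Consequently your identification of $\nu$ as a weak-$\ast$ limit of the plasma tensor $\int v\otimes v f_\eps\,dv$ alone is incomplete, and the identity \eqref{NLE} you would obtain is missing a nonvanishing term: the defect measure here genuinely has a singular component carried by the charge trajectory, not only the kinetic one carried by the plasma.
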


The next theorem specifies the structure of the defect measure:

\begin{theorem}
 \label{thm:main-structure}Under the same assumptions as in Theorem \ref{thm:main}, 

\textbullet \,  There exists $\nu_0=\nu_0(t,x,\theta) \in L^\infty\left(\R_+,\mathcal{M}_+(\RR\times \mathbb{S}^1)\right)$
and there exists $(\alpha,\beta) \in L^\infty(\R_+, \R)^2$ such that 
$$\nu=\int_{\mathbb{S}^1}\theta\otimes \theta \:d\nu_0(\theta)
+\begin{pmatrix}-\beta \delta_\xi &\alpha \delta_\xi \\ \alpha \delta_\xi& 
\beta  \delta_{\xi}
\end{pmatrix}.$$ In particular, $\nu$ is symmetric.

\textbullet \,  The sequence $(f_{\eps_n})$ converges to $f=f(t,x,|v|)$ 
in $L^\infty(\R_+, \mathcal{M}_+(\RR \times \RR))$ weak - 
$\ast$ and  $\rho=\int f\,dv$. Moreover, for all  $\Phi$ continuous on $\mathbb{S}^1$, the sequence
$$\int_{\RR}\left(f_{\eps_n}(t,x,v)-f(t,x,|v|)\right)\Phi\left(\frac{v}{|v|}\right)|v|^2\,dv$$ converges to$$\int_{\mathbb{S}^1}\Phi(\theta)\: d\nu_0(\theta)$$ in the 
sense of distributions on $\R_+\times \RR$.

\end{theorem}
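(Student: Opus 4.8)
The plan is to read off both parts of the theorem from the weak-$*$ limit of the kinetic stress tensor $T_\eps(t,x):=\int_{\RR}\fe(t,x,v)\,v\otimes v\,dv$ together with the analogous contribution of the point charge, these being the objects that produced the defect term $\int D\nabla^\perp\Phi:d\nu$ in \eqref{NLE}. First I would set up compactness. The energy bound in \eqref{hyp:ini-1-bis}, which is conserved, gives for a.e. $t$ that $\int_{\RR}|v|^2\fe\,dx\,dv\le C$ and $|\eta_\eps|\le C$ uniformly in $\eps$; together with $\|\fe\|_{L^1}\le C$ this yields, along the subsequence $\eps_n$ already fixed in Theorem \ref{thm:main}, a weak-$*$ limit $\fe\rightharpoonup f$ in $L^\infty(\R_+,\mathcal{M}_+(\RR\times\RR))$. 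The second moment bound is a tightness estimate in $v$, so testing against $\psi(x)$ (approximating $\psi(x)\cdot\mathbf{1}$ by $\psi(x)\chi_R(v)$ and letting $R\to\infty$) gives $\int_{\RR}f\,dv=\rho$, the limit already identified in Theorem \ref{thm:main}.

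Next I would establish the isotropy $f=f(t,x,|v|)$. Multiplying the Vlasov equation by $\eps^2\chi(t,x,v)$ with $\chi\in C_c^\infty$ and integrating by parts, every term carries an explicit factor of $\eps$ or $\eps^2$ except the magnetic one, which becomes $\int\fe\,(v^\perp\cdot\nabla_v\chi)$. Passing to the limit leaves $\int f\,(v^\perp\cdot\nabla_v\chi)=0$ for all $\chi$, i.e. $v^\perp\cdot\nabla_v f=0$ in $\mathcal{D}'$; since $v^\perp\cdot\nabla_v$ generates the rotations of the velocity variable, $f$ is rotationally invariant in $v$. The delicate point here — and the step I expect to be the \emph{main obstacle} — is the control of the singular self-force $\frac{\gamma}{\eps}\frac{x-\xi_\eps}{|x-\xi_\eps|^2}\cdot\nabla_v\chi$: although it carries one factor of $\eps$, the kernel is unbounded near $x=\xi_\eps$ and the separation $\delta_\eps$ from \eqref{hyp:ini-0-bis} may degenerate. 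I would dispatch this using the lower bound on $|x-\xi_\eps|$ along $\supp\fe$ supplied by the Caprino--Marchioro-type estimates underlying Theorem \ref{thm:main} (equivalently, cutting off a shrinking neighbourhood of $\xi_\eps$ and bounding the remainder by the uniform mass and energy).

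The positive angular measure $\nu_0$ is then the defect in the $|v|^2$-weighted angular distribution. For $\psi\in C_c^\infty(\R_+\times\RR)$ and $\Phi\in C(\mathbb{S}^1)$, the quantity $\iint\fe\,\psi(t,x)\,\Phi(v/|v|)\,|v|^2\,dv\,dx\,dt$ is bounded by the energy, hence converges along $\eps_n$; subtracting the contribution of the limit $f$ and using a truncation/Fatou argument (legitimate because $\int f|v|^2\,dv\le\liminf\int\fe|v|^2\,dv<\infty$) produces $\nu_0\in L^\infty(\R_+,\mathcal{M}_+(\RR\times\mathbb{S}^1))$ satisfying the stated convergence, with $\nu_0\ge 0$ because the defect of a $|v|^2$-weighted mass under escape to $v=\infty$ is nonnegative, and $L^\infty$ in $t$ because the energy is conserved. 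Choosing $\Phi(\theta)=\theta\otimes\theta$ identifies the plasma part of the limit of $T_\eps$ with $\int_{\mathbb{S}^1}\theta\otimes\theta\,d\nu_0$, up to the isotropic term $\tfrac12\big(\int f|v|^2\,dv\big)I$ coming from the isotropic limit $f$; this isotropic term is immaterial in \eqref{NLE}, since $I:D\nabla^\perp\Phi=\tr(D\nabla^\perp\Phi)=\nabla\cdot\nabla^\perp\Phi=0$, which is exactly why $\nu$ is determined only modulo multiples of $I$ and may be represented by $\int_{\mathbb{S}^1}\theta\otimes\theta\,d\nu_0$.

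Finally I would treat the charge part. Writing $\frac{d}{dt}\Phi(\xi_\eps)=\nabla\Phi(\xi_\eps)\cdot\frac{\eta_\eps}{\eps}$ and eliminating $\eta_\eps/\eps$ through the charge momentum law $\dot\eta_\eps=\gamma(\eta_\eps^\perp/\eps^2+E_\eps(\xi_\eps)/\eps)$ gives $\frac{\eta_\eps}{\eps}=E_\eps^\perp(\xi_\eps)-\frac{\eps}{\gamma}\dot\eta_\eps^\perp$. The drift $E_\eps^\perp(\xi_\eps)$ feeds the point-vortex interaction already accounted for inside $\mathcal{H}_\Phi[\rho+\gamma\delta_\xi,\rho+\gamma\delta_\xi]$, while an integration by parts in time turns the a priori singular remainder $-\eps\nabla\Phi(\xi_\eps)\cdot\dot\eta_\eps^\perp$ into the \emph{bounded} expression $D^2\Phi(\xi_\eps):(\eta_\eps^\perp\otimes\eta_\eps)$ plus an $O(\eps)$ error. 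Since $\eta_\eps^\perp\otimes\eta_\eps$ is trace-free, its weak-$*$ limits $\alpha:=-\lim\eta_{\eps,1}\eta_{\eps,2}$ and $\beta:=\tfrac12\lim(\eta_{\eps,1}^2-\eta_{\eps,2}^2)$ lie in $L^\infty(\R_+)$ (as $|\eta_\eps|\le C$), and matching $D^2\Phi(\xi):(\eta^\perp\otimes\eta)$ with the contraction $D\nabla^\perp\Phi:\nu$ yields exactly the charge matrix $\begin{pmatrix}-\beta\delta_\xi & \alpha\delta_\xi\\ \alpha\delta_\xi & \beta\delta_\xi\end{pmatrix}$ concentrated at $\xi$. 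Symmetry of $\nu$ is then immediate, since both the plasma part $\int_{\mathbb{S}^1}\theta\otimes\theta\,d\nu_0$ and this trace-free charge matrix are symmetric.
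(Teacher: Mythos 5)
Your plan follows the same architecture as the paper's proof: weak-$*$ compactness of $(f_{\eps_n})$, isotropy of the limit via $\nabla_v\cdot(v^\perp f)=0$ obtained by multiplying the equation by $\eps^2$, the Golse--Saint-Raymond construction of the angular measure $\nu_0$, and the charge matrix obtained from weak-$*$ limits of the quadratic quantities $\eta_{\eps_n,1}\eta_{\eps_n,2}$ and $\eta_{\eps_n,2}^2-\eta_{\eps_n,1}^2$ (the paper produces the quadratic term $-\int_0^t\eta_\eps\cdot[D\nabla^\perp\Phi(s,\xi_\eps(s))\,\eta_\eps(s)]\,ds$ already in Proposition \ref{prop:weak-formulation-3}, by essentially the time integration by parts you describe; your sign conventions for $\alpha,\beta$ differ from the paper's but this is immaterial since they are only defined as weak-$*$ limits). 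However, there is a genuine gap exactly at the step you yourself flag as the main obstacle: the singular term $\eps L_\eps\cdot\nabla_v\chi\, f_\eps$ in the isotropy lemma (Lemma \ref{lemma:radial} of the paper). Neither of your two proposed remedies works. The Caprino--Marchioro separation estimate is a fixed-$\eps$ statement: hypothesis \eqref{hyp:ini-0-bis} allows the initial separation $\delta_\eps$ to shrink arbitrarily fast as $\eps\to0$, so there is no uniform-in-$\eps$ lower bound on $|x-\xi_\eps(t)|$ over $\supp\fe(t)$. And ``uniform mass and energy'' cannot bound the near-charge remainder either: the conserved energy only yields the logarithmic control $\int|\ln|x-\xi_\eps||\,\rho_\eps\,dx\leq C$, which is compatible with $\eps\int_{B(\xi_\eps,r)}\rho_\eps(x)\,|x-\xi_\eps|^{-1}dx\to\infty$ (for instance, mass of order $\eps$ concentrated at distance $e^{-1/\eps}$ from the charge).

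What actually closes this step --- and it is precisely where hypothesis \eqref{hyp:ini-2-bis} enters --- is the $L^\infty$ bound on $\fe$: the paper splits the $x$-integral at radius $\|f_\eps^0\|_{L^\infty}^{-1/2}$. Inside that ball one uses $f_\eps\leq\|f_\eps^0\|_{L^\infty}$ (and compact $v$-support of the test function) together with the local integrability of $|x-\xi_\eps|^{-1}$, giving a contribution $O(\eps\|f_\eps^0\|_{L^\infty}^{1/2})$; outside it, $|L_\eps|\leq\gamma\|f_\eps^0\|_{L^\infty}^{1/2}$ and the $L^1$ bound give the same order, so the whole term is $O\bigl((\eps^2\|f_\eps^0\|_{L^\infty})^{1/2}\bigr)\to0$ by \eqref{hyp:ini-2-bis}. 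A secondary instance of the same oversight: you assert that after multiplying by $\eps^2$ ``every term carries an explicit factor of $\eps$ or $\eps^2$'' and hence vanishes, but the self-consistent term $\eps E_\eps\cdot\nabla_v\chi\,f_\eps$ is not innocuous, since $\|E_\eps\|_{L^\infty}$ may blow up as $\eps\to0$; the paper controls it via the decomposition \eqref{ineq:e-barre} and the interpolation bound $\|\rho_\eps\|_{L^2}\leq C\|f_\eps^0\|_{L^\infty}^{1/2}$, which again produces $\eps\|f_\eps^0\|_{L^\infty}^{1/2}\to0$. Without these two estimates the isotropy of $f$, and with it the existence and positivity of $\nu_0$, is not established, so you should incorporate the $L^\infty$ splitting argument explicitly.
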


For the asymptotics without charge, Theorems \ref{thm:main} and \ref{thm:main-structure} were obtained by Golse and Saint-Raymond \cite[Theorem A]{golse-sr}, 
in which case $\nu$ reduces to $\nu_0$. The authors also derived some conditions ensuring that the defect measure is
rotation invariant, so that the terms $\int \theta_1\theta_2d\nu_0(\theta)$ and $\int (\theta_1^2-\theta_2^2)d\nu_0(\theta)$ eventually vanish. It would be interesting 
to study analog criteria for this so-called phenomenom of concentration-cancellation in the present case.

It was later proved by Saint-Raymond \cite{SR-01} that the defect measure vanishes, so that any accumulation point is a vortex-sheet solution of the Euler equation 
\eqref{eq:Euler}. The global existence of such solutions had been previously obtained by Delort \cite{Delort}.

\medskip

The equation \eqref{NLE} is nothing but the Euler equation \eqref{eq:Euler} for the total measure-valued 
vorticity $\omega=\rho+\gamma \delta_{\xi}$, according to the definition given by Poupaud \cite{poupaud}. We stress that such solutions however do not enter the framework of vortex-sheet solutions since Dirac masses do not belong to $H^{-1}$.

\medskip

Our last result shows that if there is no defect measure, assuming additional regularity on $\rho$ enables to decouple  the equation \eqref{NLE} 
to obtain the vortex-wave system
\begin{theorem}\label{thm:main-bis} Let $(\rho,\xi)$ be an accumulation point given by Theorem \ref{thm:main} and such that $\nu$ vanishes. 
If moreover $\rho\in L^\infty_\textrm{loc}(\R_+,L^p(\RR))$ for some $p>2$ and $\xi\in C^{1}(\R_+,\RR)$ then $(\rho, \xi)$ satisfies the system
\begin{equation*}
%\label{syst:VW}
\begin{cases}
\displaystyle \dt \rho+\left(E^\perp+ \gamma \frac{(x-\xi)^\perp}{|x-\xi|^2}\right)\cdot \nabla \rho=0\\
\displaystyle \dot{\xi}(t)= E^\perp(t,\xi(t)),
\end{cases}
\end{equation*}where $E= \frac{ x}{|x|^2}\ast \rho$.
\end{theorem}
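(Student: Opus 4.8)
The plan is to expand the quadratic bracket $\mathcal{H}_{\Phi(t)}[\rho+\gamma\delta_\xi,\rho+\gamma\delta_\xi]$ by bilinearity and symmetry, and then to separate the point contribution at $\xi$ from the bulk contribution carried by $\rho$. Bilinearity produces three terms: the self-interaction $\mathcal{H}_{\Phi}[\rho,\rho]$, the cross term $2\gamma\mathcal{H}_{\Phi}[\rho,\delta_\xi]$, and the point self-interaction $\gamma^2\mathcal{H}_{\Phi}[\delta_\xi,\delta_\xi]$. The last one vanishes identically since $H_{\Phi}(\xi,\xi)=0$ by Definition \ref{def:poupaud}. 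As $\rho(t)\in L^p$ with $p>2$, Proposition \ref{prop:symm} gives $\mathcal{H}_{\Phi(t)}[\rho,\rho]=\int_{\RR}E^\perp\cdot\nabla\Phi\,d\rho$. For the cross term I would split $\nabla\Phi(x)-\nabla\Phi(\xi)$ in $H_\Phi(x,\xi)$ and use $\int_{\RR}\frac{(x-\xi)^\perp}{|x-\xi|^2}\,d\rho(x)=-E^\perp(t,\xi)$ (the integral converging because $|x-\xi|^{-1}\rho\in L^1$ when $p>2$), obtaining
\[2\gamma\mathcal{H}_{\Phi(t)}[\rho,\delta_\xi]=\gamma\int_{\RR}\frac{(x-\xi)^\perp}{|x-\xi|^2}\cdot\nabla\Phi\,d\rho+\gamma E^\perp(t,\xi)\cdot\nabla\Phi(t,\xi).\]
Substituting these into \eqref{NLE} with $\nu\equiv0$, integrating the distributional identity in time against a cutoff so that the total $t$-derivative drops out, and using $\int_{\R_+}\partial_t\Phi(t,\xi)\,dt=-\int_{\R_+}\dot\xi\cdot\nabla\Phi(t,\xi)\,dt$ (legitimate since $\xi\in C^1$) to cancel the $\gamma\partial_t\Phi(\xi)$ contributions, I arrive, with $u=E^\perp+\gamma\frac{(x-\xi)^\perp}{|x-\xi|^2}$, at
\[\langle \dt\rho+\nabla\cdot(u\rho),\Phi\rangle=\gamma\int_{\R_+}\bigl(E^\perp(t,\xi(t))-\dot\xi(t)\bigr)\cdot\nabla\Phi(t,\xi(t))\,dt.\qquad(\ast)\]
All ingredients are genuine distributions: $\rho\in L^\infty_{\loc}(L^p)$, $E^\perp$ is bounded and continuous (as $\rho\in L^1\cap L^p$, $p>2$), and $u\rho\in L^s_{\loc}$ for some $s>1$, again because $p>2$. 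The whole content is to read off from $(\ast)$ both the ODE and the PDE.

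To extract the ODE I would test against functions concentrating on the trajectory. Fix $\theta\in C_c^\infty(\R_+)$ and $\beta\in C_c^\infty(\RR)$ with $\nabla\beta(0)=e_j$ ($e_j$ the $j$-th basis vector), and use $\Phi_\delta(t,x)=\theta(t)\beta\bigl(\frac{x-\xi(t)}{\delta}\bigr)$, which is admissible after a routine density argument since $(\ast)$ only involves first derivatives of $\Phi$. The right-hand side equals $\delta^{-1}\int_{\R_+}\theta(t)\,\gamma\bigl(E^\perp(t,\xi)-\dot\xi\bigr)_j\,dt$, a fixed multiple of $\delta^{-1}$. The key point is that the left-hand side is $o(\delta^{-1})$: after the rescaling $x=\xi(t)+\delta y$, the bounded field $E^\perp$ and the time-derivative terms produce vanishing factors ($\delta^{1-2/p}$ and smaller), while the singular self-field contributes
\[\Bigl|\gamma\int_{\R_+}\theta(t)\int_{\RR}\frac{y^\perp}{|y|^2}\cdot\nabla\beta(y)\,\rho(t,\xi+\delta y)\,dy\,dt\Bigr|\le C\,\delta^{-2/p},\]
by H\"older's inequality, using $|y|^{-1}\in L^{p'}(\supp\beta)$ (here $p'<2$) and $\|\rho(t,\xi+\delta\,\cdot)\|_{L^p}\le\delta^{-2/p}\|\rho(t)\|_{L^p}$. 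Since $p>2$ gives $\delta^{-2/p}=o(\delta^{-1})$, letting $\delta\to0$ forces $\int_{\R_+}\theta\,(E^\perp(t,\xi)-\dot\xi)_j\,dt=0$ for every $\theta$ and every $j$, that is $\dot\xi(t)=E^\perp(t,\xi(t))$.

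Once $\dot\xi=E^\perp(\xi)$ is known, the right-hand side of $(\ast)$ vanishes identically, so $\dt\rho+\nabla\cdot(u\rho)=0$ in $\mathcal{D}'(\R_+\times\RR)$. It then remains to pass from conservative to transport form: both $E^\perp=\nabla^\perp(\log|\cdot|\ast\rho)$ and $\frac{(x-\xi)^\perp}{|x-\xi|^2}=\nabla^\perp\log|x-\xi|$ are divergence-free (the latter away from $\xi$), and $\rho(t)\in L^p$ does not charge the point $\xi(t)$, so $\nabla\cdot(u\rho)=u\cdot\nabla\rho$ as distributions. This is exactly the advection equation of the statement, and together with $\dot\xi=E^\perp(\xi)$ it yields the vortex-wave system.

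I expect the concentration step to be the main obstacle. The velocity $u$ itself blows up like $|x-\xi|^{-1}$ at the charge, so a priori the self-field flux through a vanishing circle competes with the $\delta^{-1}$ rate of the point term, and one cannot naively discard it. The hypothesis $p>2$ is precisely what renders this self-contribution subcritical (rate $\delta^{-2/p}$ against $\delta^{-1}$), which is what decouples the ODE from the transport of $\rho$; morally this reflects the cancellation $\int_{\RR}\frac{y^\perp}{|y|^2}\cdot\nabla\beta\,dy=0$ of the self-induced velocity, which the $L^p$ bound upgrades into the quantitative gain used above.
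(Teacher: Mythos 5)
Your proof is correct, and it runs the argument in the opposite order from the paper's, with a different localization mechanism. The paper derives the PDE \emph{first}, by an excision argument: it tests \eqref{NLE} with $\Phi_\delta(t,x)=\Phi(t,x)\,\eta_\delta(|x-\xi(t)|)$, where the cutoff $\eta_\delta$ vanishes near the charge, so that all Dirac contributions drop out identically; the key point there is the exact cancellation $a^\perp\cdot a=0$, which makes the singular self-field orthogonal to the gradient of the radial cutoff, so the cutoff-derivative term $J_\delta$ involves only the bounded field $E$ and is killed by the H\"older bound $\delta^{-1}\delta^{2-2/p}\to 0$. The ODE for $\xi$ is then recovered a posteriori by re-inserting the proved transport equation into \eqref{NLE} for test functions that do not vanish at $\xi$. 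You instead first establish the combined identity $(\ast)$ for all test functions (your bilinear expansion and cross-term computation, using $\mathcal{H}_\Phi[\delta_\xi,\delta_\xi]=0$, Proposition \ref{prop:symm} and $\int\frac{(x-\xi)^\perp}{|x-\xi|^2}\,d\rho=-E^\perp(t,\xi)$, is essentially what the paper does implicitly in its second step), and then extract the ODE \emph{first} by a concentration argument, testing with functions whose gradients blow up like $\delta^{-1}$ at the charge; here $p>2$ enters as the subcritical rate $\delta^{-2/p}=o(\delta^{-1})$ for the self-field contribution, obtained by rescaled H\"older rather than by exact cancellation, and the PDE then falls out for free once the point source on the right of $(\ast)$ is known to vanish. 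Both routes use the same ingredients --- bilinearity of $\mathcal{H}$, Proposition \ref{prop:symm}, boundedness of $E$ (Remark \ref{rem:bounded}), and $|x-\xi|^{-1}\rho\in L^1_{\loc}$ --- so neither is shorter; what yours buys is the quantitative formulation $(\ast)$, which exhibits the ODE defect $E^\perp(t,\xi)-\dot\xi$ as a point source of definite strength and makes the decoupling transparent, while the paper's excision is slightly slicker in that its most singular term vanishes identically instead of being estimated. Two small points you should make explicit if you write this up: the density argument allowing the merely $C^1$-in-time test function $\theta(t)\beta\bigl(\frac{x-\xi(t)}{\delta}\bigr)$, and the step from $\dot\xi=E^\perp(t,\xi)$ almost everywhere (which is what the $\theta$-averaged identity gives) to everywhere, using the continuity of both sides.
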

\begin{remark}\label{rem:bounded}
 It is classical (see e.g. \cite{livrejaune}) that if $\rho\in  L^\infty_\textrm{loc}(\R_+,L^1\cap L^p(\RR))$ for $p>2$ then $E= \frac{ x}{|x|^2}\ast \rho$ belongs to 
 $L^\infty_\textrm{loc}(\R_+,C^{0,1-\frac{2}{p}})\cap L^\infty_\textrm{loc}(\R_+,L^\infty(\RR))$.
\end{remark}

\medskip

The plan of the paper is the following. In the next section we establish Theorems \ref{thm:main} and \ref{thm:main-structure}, decomposing the proof into several steps. 
We first look for a priori
estimates with respect to $\eps$. Then, the main argument is the weak formulation satisfied by $(f_\eps, \xi_\eps)$, derived in Proposition 
 \ref{prop:weak-formulation-2}, in which we eventually pass to the limit by using the a priori estimates. Finally we prove Theorem \ref{thm:main-bis}.

\section{Proofs of Theorem \ref{thm:main} and Theorem \ref{thm:main-structure}}

Throughout this section, we consider a sequence of initial data $(f_\eps^0,\xi_\eps^0,\eta_\eps^0)$ satisfying the assumptions \eqref{hyp:ini-0-bis} \eqref{hyp:ini-1-bis}, 
\eqref{hyp:ini-2-bis} and \eqref{hyp:ini-small}. Let $(f_\eps,\xi_\eps)$ denote any corresponding sequence of global weak solutions to \eqref{syst:VP-bis}. 

\medskip

We will sometimes denote by
\begin{equation}
\label{def:L}
L_\eps(t,x)= \gamma \frac{x-\xi_\eps(t)}{|x-\xi_\eps(t)|^2}\end{equation} the singular  electic field generated by the point charge.

\subsection{Lagrangian trajectories}
The same arguments as in \cite{caprino-marchioro} imply that the unique solution $f_\eps$ to the system \eqref{syst:VP-bis} is constant along 
the Lagrangian trajectories associated to the field $E_\eps+L_\eps$. More precisely, we have the representation 
\begin{equation}
\label{push-forward}
\fe(t)=(X_\eps(t), V_\eps(t))_\#f_\eps^0,\end{equation}
where for all $(x,v)\in \R^2\times \R^2$, the map 
$t\mapsto (X_\eps(t,x,v),V_\eps(t,x,v))$ belongs to $W^{1,\infty}(\R_+,\RR\times \RR)$ and is the unique absolutely continuous solution to the ODE 
\begin{equation}\label{syst:ODE}
\begin{cases}
\displaystyle \dot{X}_\eps(t,x,v)=\frac{1}{\eps} V_\eps(t,x,v)\\
\displaystyle \dot{V}_\eps(t,x,v)=\frac{1}{\eps^2}\left(V_\eps^\perp(t,x,v)+\eps (E_\eps+L_\eps)(t,X_\eps(t,x,v))\right),
\end{cases}
\end{equation}
with $(X_\eps,V_\eps)(0,x,v)=(x,v).$
Moreover, the repulsive interaction (recall $\gamma>0$) between the plasma and the charge ensures that if $x\neq \xi_\eps^0$ then $X_\eps(t,x,v)\neq \xi_\eps(t)$ for all $t>0$ 
(see the proof of \cite[Corollary 2.4]{caprino-marchioro}), so that $t\mapsto L_\eps(t,X_\eps(t,x,v))$ - and therefore also 
the  flow map $(X_\eps,V_\eps)$ - is globally defined in time.

Note that since $f_\eps$ has compact velocity support, then $\rho_\eps$ belongs to $L^\infty_\loc(\R_+,L^\infty(\RR))$ for all $\eps>0$. It follows in particular that $E_\eps$ belongs to $L^\infty_\loc(\R_+,L^\infty(\RR))$ as well (note that its norm may blow up as $\eps$ tends to zero) and that it is almost-Lipschitz: $|E_\eps(t,x)-E_\eps(t,y)|\leq 
C_{\eps}|x-y|(1+|\ln |x-y||)$ (see e.g. \cite[(46)]{LP}). Thus it turns out that 
for all $(x,v)\in \RR \times \RR$ the map $t\mapsto (X_\eps(t,x,v),V_\eps(t,x,v))$ belongs to $W^{1,\infty}(\R_+,\RR\times \RR)$. 
Finally, noticing that $f_\eps$ is in $C_w(\R_+,L^p)$ for all $1<p< +\infty$ because of \eqref{push-forward}, it is not difficult to infer that 
$t\mapsto E_\eps(t,x)$ is continuous in time, uniformly with respect to $x$, so that finally, $t\mapsto (X_\eps(t,x,v),V_\eps(t,x,v))$ is the unique 
$C^1$ solution to the ODE \eqref{syst:ODE}.

\subsection{First a priori estimates}

As a starting point we gather some useful facts, most of them are standard and we only sketch the proofs.
\begin{proposition}\label{prop:uni} We have for all $t>0$:
\begin{equation*}\begin{split}
& \mathcal{H}(f_\eps(t),\xi_\eps(t),\eta_\eps(t))=
 \mathcal{H}(f_\eps^0,\xi_\eps^0,\eta_\eps^0);\quad
\|f_\eps(t)\|_{L^p}=\|f_\eps^0\|_{L^p},\quad \forall 1\leq p\leq +\infty;\\
& \mathcal{I}(f_\eps(t),\xi_\eps,\eta_\eps(t))=\mathcal{I}(f_\eps(0),\xi_\eps(0),\eta_\eps(0)).
\end{split}
\end{equation*}
\end{proposition}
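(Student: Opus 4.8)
The plan is to treat the three identities separately: the $L^p$-conservation is essentially immediate, while the energy and momentum identities require differentiating in time and checking that all terms cancel. Throughout I would abbreviate the current by $j_\eps(t,x)=\int_{\RR}v\fe(t,x,v)\,dv$ and use the continuity equation $\dt\rho_\eps+\eps^{-1}\diver j_\eps=0$, obtained by integrating \eqref{syst:VP-bis} in $v$ (the $\nabla_v$-term drops by compact support in $v$). For the $L^p$ identity I would argue directly from the push-forward representation \eqref{push-forward}: the field driving \eqref{syst:ODE}, namely $\bigl(v/\eps,\ v^\perp/\eps^2+(E_\eps+L_\eps)/\eps\bigr)$, is divergence-free in $(x,v)$ (its $x$-component depends only on $v$, while $\nabla_v\cdot v^\perp=0$ and $E_\eps+L_\eps$ is $v$-independent), so the flow $(X_\eps,V_\eps)$ preserves Lebesgue measure on $\RR\times\RR$; since $\fe(t)$ is the image of $f_\eps^0$ under this measure-preserving map, a change of variables gives $\|\fe(t)\|_{L^p}=\|f_\eps^0\|_{L^p}$ for every $1\le p\le\infty$.

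For the energy I would differentiate $\mathcal{H}(\fe(t),\xi_\eps(t),\eta_\eps(t))$ term by term. An integration by parts in $v$ together with $v\cdot v^\perp=0$ shows that the plasma kinetic term contributes $\eps^{-1}\int j_\eps\cdot(E_\eps+L_\eps)\,dx$, while the charge kinetic term $\tfrac12|\eta_\eps|^2$ contributes $\gamma\,\dot\xi_\eps\cdot E_\eps(\xi_\eps)$, the magnetic part dropping out because $\eta_\eps\cdot\eta_\eps^\perp=0$. Differentiating the two potential terms and using $\nabla\ln|\cdot|=\cdot/|\cdot|^2$ together with the continuity equation, the plasma self-interaction yields $-\eps^{-1}\int E_\eps\cdot j_\eps\,dx$, and the charge-plasma interaction yields $-\eps^{-1}\int L_\eps\cdot j_\eps\,dx-\gamma\,\dot\xi_\eps\cdot E_\eps(\xi_\eps)$; here the second contribution comes from the \emph{explicit} time dependence $\partial_t\ln|x-\xi_\eps(t)|=-\dot\xi_\eps\cdot(x-\xi_\eps)/|x-\xi_\eps|^2$, and it is precisely this term that cancels the one produced by the charge kinetic energy. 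Summing the four contributions, everything cancels and $\tfrac{d}{dt}\mathcal{H}=0$.

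For the momentum I would first simplify $\mathcal{I}$ using $|v^\perp|=|v|$ and $|\eta^\perp|=|\eta|$, which collapses it to $\int(|x|^2+2\eps\,x\cdot v^\perp)\fe\,dx\,dv+\gamma|\xi_\eps|^2+2\eps\,\xi_\eps\cdot\eta_\eps^\perp$. I would differentiate the plasma part by testing \eqref{syst:VP-bis} against $\psi(x,v)=|x|^2+2\eps\,x\cdot v^\perp$; after integration by parts the transport and magnetic contributions cancel (using $v\cdot v^\perp=0$ and the rotation invariance $v^\perp\cdot x^\perp=v\cdot x$), the self-consistent field integrates to zero by the standard symmetrization of the odd kernel $\tfrac{x-y}{|x-y|^2}\cdot x^\perp$, and there survives only the cross term $-2\int\rho_\eps\,L_\eps\cdot x^\perp\,dx$. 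Differentiating the charge part with the ODEs for $(\xi_\eps,\eta_\eps)$, the magnetic terms again cancel and only $2\gamma\,\xi_\eps\cdot E_\eps(\xi_\eps)^\perp$ remains. A short computation (using $\xi_\eps\cdot\xi_\eps^\perp=0$ and $a\cdot b^\perp=-a^\perp\cdot b$) identifies the two surviving cross terms as $-2\gamma\int\rho_\eps(x)\,\xi_\eps^\perp\cdot x\,|x-\xi_\eps|^{-2}\,dx$ and its opposite, so they cancel and $\tfrac{d}{dt}\mathcal{I}=0$.

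The main obstacle is rigor rather than algebra: every computation involves the singular field $L_\eps$ and the logarithmic kernel, so the integrations by parts and the differentiation under the integral sign are a priori only formal. They are legitimized by the Lagrangian analysis of the previous subsection, which guarantees that for each fixed $t$ the support of $\rho_\eps(t)$ (hence of $j_\eps(t)$) stays at positive distance from $\xi_\eps(t)$, so all integrands paired against $\rho_\eps$ or $j_\eps$ are bounded, while $f_\eps^0\in L^1\cap L^\infty$ with compact support makes the logarithmic energies finite. A completely rigorous argument would carry out the above computations on the regularized Caprino–Marchioro approximating system and pass to the limit; but since these identities are standard, the formal computation with this justification is the expected level of detail.
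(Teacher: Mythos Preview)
Your proposal is correct and follows essentially the same approach as the paper: the paper's proof simply asserts that $\dot{\mathcal H}=0$ and $\dot{\mathcal I}=0$ ``by straightforward computations'' (referring for the latter to \cite[Proposition~2.3]{miot-16}) and deduces the $L^p$ conservation from the push-forward \eqref{push-forward}, which is exactly what you have spelled out in detail. Your observation that the support of $\rho_\eps(t)$ stays away from $\xi_\eps(t)$, making the formal manipulations with $L_\eps$ legitimate, is the right justification and is implicit in the paper's reliance on the Lagrangian picture.
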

\begin{proof} By straightforward computations we show that $\dot{\mathcal{H}}(f_\eps(t),\xi_\eps(t),\eta_\eps(t))=0$.  
The conservation of the norms is a consequence of \eqref{push-forward}. Finally, adapting the proof of  \cite[Proposition 2.3]{miot-16} 
to the present case with point charge, we show that $\dot{\mathcal{I}}(f_\eps(t),\xi_\eps(t),\eta_\eps(t))=0$.
\end{proof}

\begin{corollary}\label{coro:uni}
We have\begin{equation*}\begin{split}
&\sup_{t\in \R_+}\sup_{\eps>0}\left( \iint_{\RR\times \RR}|v|^2 f_\eps(t,x,v)\,dx\,dv+\int_{\RR}|x|^2\rho_\eps(t,x)\,dx\right)<+\infty,\\
&\sup_{t\in \R_+}\sup_{\eps>0}\left(|\xi_\eps(t)|+\eta_\eps(t)|\right)<+\infty,
\end{split}
\end{equation*}and 
\begin{equation*}
\sup_{t\in \R_+}\|\rho_\eps(t)\|_{L^2}\leq C \|f_\eps^0\|_{L^\infty}^{1/2}.
\end{equation*}
Finally,
\begin{equation*}\begin{split}
&\sup_{t\in \R_+}\sup_{\eps>0}\left| \iint_{\RR\times \RR}\ln|x-y|\rho_\eps(t,x)\rho_\eps(t,y)\,dx\,dy\right|<+\infty.
\end{split}
\end{equation*}

\end{corollary}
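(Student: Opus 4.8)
The plan is to run a coupled bootstrap between the conserved energy $\mathcal{H}$ and the conserved momentum $\mathcal{I}$ furnished by Proposition \ref{prop:uni}, and then to extract the $L^2$ and logarithmic bounds by interpolation. Throughout I abbreviate the total second moment $A_\eps(t)=\int_{\RR}|x|^2\rho_\eps(t,x)\,dx+\gamma|\xi_\eps(t)|^2$ and the total kinetic energy $K_\eps(t)=\frac12\iint_{\RR\times\RR}|v|^2\fe(t)\,dx\,dv+\frac12|\eta_\eps(t)|^2$. The point is that assumption \eqref{hyp:ini-1-bis} ensures that both $\mathcal{H}(\fe^0,\xi_\eps^0,\eta_\eps^0)$ and $\mathcal{I}(\fe^0,\xi_\eps^0,\eta_\eps^0)$ are bounded uniformly in $\eps$, while Proposition \ref{prop:uni} freezes these two quantities along the flow.

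First I would bound $K_\eps$ in terms of $A_\eps$ using the energy. Rewriting the energy identity as $K_\eps(t)=\mathcal{H}(\fe^0)+\frac12\iint\ln|x-y|\rho_\eps\rho_\eps+\gamma\int\ln|x-\xi_\eps|\rho_\eps$, I split $\ln=\ln_+-\ln_-$. Since $\rho_\eps\ge 0$, the singular contributions $-\frac12\iint\ln_-|x-y|\rho_\eps\rho_\eps$ and $-\gamma\int\ln_-|x-\xi_\eps|\rho_\eps$ are non-positive and may simply be dropped in this upper bound; and using $\ln_+|x-y|\le C(1+|x|^2+|y|^2)$ together with the uniform control of $\|\rho_\eps\|_{L^1}=\|\fe^0\|_{L^1}$ one gets $\frac12\iint\ln_+|x-y|\rho_\eps\rho_\eps\le C(1+\int|x|^2\rho_\eps)$, and likewise for the charge term. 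This yields $K_\eps(t)\le C(1+A_\eps(t))$. Conversely, I would bound $A_\eps$ in terms of $K_\eps$ via the momentum: expanding $\mathcal{I}$ and using $|x+\eps v^\perp|^2-\eps^2|v|^2=|x|^2+2\eps\,x\cdot v^\perp$ gives $A_\eps(t)=\mathcal{I}(\fe^0)-2\eps\bigl(\int x\cdot v^\perp\fe+\xi_\eps\cdot\eta_\eps^\perp\bigr)+O(\eps^2|\eta_\eps|^2)$. Handling the cross terms by Young's inequality, $2\eps|x||v|\le\frac14|x|^2+4\eps^2|v|^2$ and $2\eps|\xi_\eps||\eta_\eps|\le\frac{\gamma}{4}|\xi_\eps|^2+C\eps^2|\eta_\eps|^2$, lets one absorb $\frac12A_\eps$ into the left-hand side, leaving $A_\eps(t)\le C+C\eps^2K_\eps(t)$. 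Combining the two inequalities gives $A_\eps(1-C\eps^2)\le C$, so for $\eps$ small enough $A_\eps$ and then $K_\eps$ are bounded uniformly in $t$ and $\eps$, which is precisely the first two displayed estimates.

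The remaining two estimates are then routine. For the $L^2$ bound I would use the standard kinetic interpolation $\rho_\eps(x)\le C\|\fe(t,x,\cdot)\|_{L^\infty}^{1/2}\bigl(\int|v|^2\fe(t,x,v)\,dv\bigr)^{1/2}$ (obtained by splitting $\{|v|\le R\}$ and optimizing in $R$); squaring, integrating in $x$, and invoking the conservation $\|\fe(t)\|_{L^\infty}=\|\fe^0\|_{L^\infty}$ and the kinetic bound gives $\|\rho_\eps(t)\|_{L^2}^2\le C\|\fe^0\|_{L^\infty}K_\eps\le C\|\fe^0\|_{L^\infty}$. For the logarithmic interaction I would again split $\ln=\ln_+-\ln_-$: the positive part is controlled by the second moment exactly as before, while the singular part obeys $\iint\ln_-|x-y|\rho_\eps\rho_\eps\le C\|\rho_\eps\|_{L^2}^2$ by Young's convolution inequality, because $\ln_-|\cdot|\,\mathbf{1}_{|\cdot|\le 1}\in L^1(\RR)$. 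The two preceding bounds then close the estimate.

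The main obstacle is the middle step: the energy controls $K_\eps$ only through $A_\eps$, whereas the momentum controls $A_\eps$ only through $K_\eps$, and breaking this circularity requires three ingredients to cooperate, namely the favorable (non-positive) sign of the singular logarithmic contributions in the energy upper bound, the $\eps$-weighted Young splitting of the momentum cross terms, and the smallness of $\eps$ that finally absorbs $C\eps^2K_\eps$. I do not expect the smallness condition \eqref{hyp:ini-small} to enter at this stage; its role should appear later, in the $H^{-1}$ and compactness analysis.
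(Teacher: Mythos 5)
Your overall strategy -- bootstrapping between the conserved energy $\mathcal{H}$ and the conserved momentum $\mathcal{I}$ of Proposition \ref{prop:uni}, then kinetic interpolation for the $L^2$ bound -- is the same as the paper's, and your $L^2$ step is identical to it. But two steps deviate, one lossily, one fatally. The lossy one is the bootstrap itself: you bound the $\ln_+$ terms by second moments, getting $K_\eps\le C(1+A_\eps)$, \emph{linear} in $A_\eps$; combined with $A_\eps\le C+C\eps^2K_\eps$ this closes only when $C\eps^2<1$, so you obtain the first two estimates only for $0<\eps<\eps_0$. The paper instead uses $\ln_+ r\le r$ together with Cauchy--Schwarz, $\iint_{\RR\times\RR}(|x|+|y|)\rho_\eps\rho_\eps\,dx\,dy\le C\bigl(\int_{\RR}|x|^2\rho_\eps\,dx\bigr)^{1/2}$ (the mass being fixed), which gives the \emph{sublinear} bound $K_\eps\le C+CA_\eps^{1/2}$; then the momentum inequality reads $A_\eps\le C+\frac12 A_\eps+C\eps^2A_\eps^{1/2}$, and $CA_\eps^{1/2}\le\frac14A_\eps+C$ closes the loop for every $0<\eps<1$, with no smallness condition. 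Since the statement and the downstream results (e.g.\ \eqref{ineq:e-barre}, Proposition \ref{prop:delort-2}) quantify over the whole range of $\eps$, your version proves a strictly weaker assertion, though one sufficient for the asymptotics $\eps\to0$.

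The genuine gap is in the last estimate. You control $\iint\ln_-|x-y|\rho_\eps\rho_\eps\,dx\,dy\le C\|\rho_\eps\|_{L^2}^2$ and then invoke $\|\rho_\eps\|_{L^2}\le C\|f_\eps^0\|_{L^\infty}^{1/2}$. But assumption \eqref{hyp:ini-2-bis} only requires $\eps^2\|f_\eps^0\|_{L^\infty}\to0$; the whole point of that hypothesis, as opposed to a uniform $L^\infty$ bound, is to allow $\|f_\eps^0\|_{L^\infty}\to+\infty$ so that the limit density can be a genuine measure. Your chain therefore yields a bound of size $\|f_\eps^0\|_{L^\infty}$, possibly blowing up like $o(\eps^{-2})$, not the claimed $\sup_{\eps}$ bound -- and the uniformity is exactly what is needed to feed Majda's lemma (Proposition \ref{prop:delort}), hence Proposition \ref{prop:delort-2}, the H\"older estimate of Corollary \ref{coro:holder}, and the Delort-type passage to the limit in the nonlinear term. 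The argument that actually delivers uniformity uses the energy a second time: once $K_\eps\le C$ is known, conservation of $\mathcal{H}$ forces the potential part $\frac12\iint\ln|x-y|\rho_\eps\rho_\eps+\gamma\int\ln|x-\xi_\eps|\rho_\eps$ to be uniformly bounded; its $\ln_+$ pieces are uniformly bounded by the moment estimates and $|\xi_\eps|\le C$; hence the sum of the two $\ln_-$ pieces is uniformly bounded, and since both are nonnegative (here $\rho_\eps\ge0$ and the repulsive sign $\gamma>0$ are essential), each is bounded separately, in particular $\iint\ln_-|x-y|\rho_\eps\rho_\eps\le C$. To be fair, the paper's own justification of this point is a one-liner invoking $\|\rho_\eps\|_{L^2}$ as well, which read literally has the same defect as yours; but only the energy-positivity argument proves the $\eps$-uniform statement that the rest of the paper relies on.
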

\begin{proof} The first estimate is established in \cite{caprino-marchioro} for finite interval of times. For a global in time estimate, we adapt easily the case without charge, which was handled in \cite[Proposition 2.4]{miot-16}, in the following way. We omit below the dependence upon $t$ when not misleading. Setting $$K_\eps=\iint_{\RR\times \RR}|v|^2 f_\eps(x,v)\,dx\,dv+|\eta_\eps|^2$$ we have by Proposition \ref{prop:uni}, by Cauchy-Schwarz inequality and by the uniform bound \eqref{hyp:ini-1-bis},
\begin{equation*}
\begin{split}
K_\eps&\leq2\mathcal{H}(f_\eps,\eta_\eps,\xi_\eps)+\iint_{\RR\times \RR}\ln_+|x-y|\rho_\eps(x)\rho_\eps(y)\,dx\,dy
+2\gamma \int_{\RR}\ln_+|x-\xi_\eps|\rho_\eps(x)\,dx\\
&\leq 2\mathcal{H}(f_\eps,\eta_\eps,\xi_\eps)(0)+C\iint_{\RR\times \RR}\left(|x|+|y|\right)\rho_\eps(x)\rho_\eps(y)\,dx\,dy
+C |\xi_\eps|\|\rho_\eps\|_{L^1}.\end{split}\end{equation*}
We have used that $\ln_+r\leq r$ in the second inequality. Thus by \eqref{hyp:ini-1-bis},
\begin{equation}\begin{split}\label{ineq:K}
K_\eps &\leq C+C\left(|\xi_\eps|^2+\int_{\RR}|x|^2\rho_\eps(x)\,dx\right)^{1/2}.
\end{split}
\end{equation}  Applying again
Proposition \ref{prop:uni},  we have
\begin{equation*}
\begin{split}&\gamma |\xi_\eps|^2+\int_{\RR}|x|^2\rho_\eps(x)\,dx= \mathcal{I}(f_\eps,\xi_\eps,\eta_\eps)+2\eps\int_{\RR} x\cdot v^\perp f_\eps(x,v)\,dx\,dv +2\eps \xi_\eps\cdot \eta_\eps ^\perp\\
&\leq   \mathcal{I}(f_\eps^0,\xi_\eps^0,\eta_\eps^0)+\frac{1}{2}\left(|\xi_\eps|^2+\int_{\RR}|x|^2\rho_\eps(x)\,dx\right)
+ C\eps^2 K_\eps\\
&\leq C\left( |\xi_\eps^0|^2+\int_{\RR}|x|^2\rho_\eps^0(x)\,dx\right)+ C\eps^2 K_\eps^0+\frac{1}{2}\left(|\xi_\eps|^2+\int_{\RR}|x|^2\rho_\eps(x)\,dx\right)
+ C\eps^2 K_\eps.
\end{split}
\end{equation*}
By \eqref{ineq:K} and by \eqref{hyp:ini-1-bis} we get
\begin{equation*}
\begin{split}\gamma |\xi_\eps|^2+\int_{\RR}|x|^2\rho_\eps(x)\,dx
&\leq C+\frac{1}{2}\left(|\xi_\eps|^2+\int_{\RR}|x|^2\rho_\eps(x)\,dx\right)\\
&+ C\eps^2 \left(|\xi_\eps|^2+\int_{\RR}|x|^2\rho_\eps(x)\,dx\right)^{1/2},
\end{split}
\end{equation*}
therefore
\begin{equation*}
\begin{split}|\xi_\eps|^2+\int_{\RR}|x|^2\rho_\eps(x)\,dx\leq C,
\end{split}
\end{equation*}so that also $K_\eps\leq C$.

The second estimate is classical, see e.g. \cite[Lemma 3.1]{golse-sr} or \cite[Lemma 2.4]{SR-02}: one has the interpolation inequality 
\begin{equation*}
\|\rho_\eps\|_{L^2}\leq C \|\fe\|_{L^\infty}^{1/2} \left(\iint_{\RR\times \RR}|v|^2 f_\eps(t,x,v)\,dx\,dv\right)^{1/2}
\end{equation*}
and the estimate $K_\eps\leq C$ yields the result. 

Finally, we obtain the last estimate by noticing that the left-hand-side can be estimated in terms of $\|\rho_\eps\|_{L^2}$, $\int |x|^2 \rho_\eps(x)\,dx$ and $\|\rho_\eps\|_{L^1}$. The conclusion follows.
\end{proof}

As in \cite{miot-16}, we introduce a smooth, positive function $\overline{\rho}_\eps$, compactly supported in $B(0,1)$, such that $\int \overline{\rho}_\eps=\int \rho_\eps$ and $\sup_{0<\eps<1}\|\overline{\rho}_\eps\|_{L^\infty}<+\infty$. Setting  $\overline{E}_\eps=(x/|x|^2)\ast \overline{\rho}_{\eps}$ it is well-known that $\sup_{0<\eps<1}\|\overline{E}_\eps\|_{L^\infty}<+\infty$ and that $E_\eps(t)-\overline{E}_\eps$ belongs to $L^2(\RR)$, 
see e.g. \cite[Proposition 3.3]{majda-bertozzi}. 
In addition, by combining the previous estimates in Proposition \ref{prop:uni} and Corollary \ref{coro:uni} we get 
\begin{equation}\label{ineq:e-barre}
\sup_{t\in \R_+}\sup_{0<\eps<1}
\|E_\eps(t)-\overline{E}_\eps\|_{L^2}<+\infty
\end{equation}
(see the proof of  \cite[Proposition 2.5]{miot-16}). We remark that $(\rho_\eps)$ is therefore uniformly bounded in $L^\infty(\R_+,H^{-1}(\RR))$ since $E_\eps(t)-\overline{E}_\eps=2\pi\nabla \Delta^{-1}(\rho_\eps(t)-\overline{\rho}_\eps).$

\medskip

We conclude this paragraph with a non concentration property that will be useful later. The following lemma was proved by Majda \cite{majda-93} (page 932). 
Other variants, using $L^2$ norm of the field, were established in \cite{Delort, Schochet}.
 \begin{proposition}
 \label{prop:delort}
  Let $\rho\in \mathcal{M}_+(\RR)$ such that $I=\int_{\RR}|x|^2 \rho(x)\,dx<+\infty.$ Assume that $${H}(\rho)=\iint_{\RR\times \RR}|\ln|x-y||\rho(x)\rho(y)\,dx\,dy<+\infty.$$
  Then there exists $C>0$ depending only on $\int \rho, I$ and ${H}$, such that for all $0<r<1/2$ we have
  \begin{equation*}
   \sup_{x_0\in \RR}\int_{B(x_0,r)}\rho(x)\,dx\leq C|\ln r|^{-1/2}.
  \end{equation*}
 
 \end{proposition}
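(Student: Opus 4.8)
The plan is to exploit the strength of the logarithmic kernel on small scales: if $\rho$ placed a large mass $\mu$ inside a ball $B(x_0,r)$ of radius $r<1/2$, then $|\ln|x-y||$ would be bounded below on $B(x_0,r)\times B(x_0,r)$ by a quantity of order $|\ln r|$, forcing $H(\rho)$ to be at least of order $\mu^2|\ln r|$. Since $H(\rho)$ is assumed finite, this caps $\mu$ by $C|\ln r|^{-1/2}$. The positivity of $\rho$ and the fact that $H$ carries the \emph{absolute value} of the logarithm are exactly what make this work: both let us throw away everything except the contribution of a single small ball.

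Concretely, I would fix $x_0\in\RR$ and $0<r<1/2$, and set $\mu=\int_{B(x_0,r)}\rho$. For $x,y\in B(x_0,r)$ one has $|x-y|<2r<1$, hence $\ln|x-y|<0$ and $|\ln|x-y||=-\ln|x-y|\ge \ln(1/(2r))$ (including the diagonal, where the left-hand side is $+\infty$). Because $\rho\ge 0$ the integrand is nonnegative, so I may discard the integral outside $B(x_0,r)\times B(x_0,r)$:
$$H(\rho)\ \ge\ \iint_{B(x_0,r)\times B(x_0,r)}|\ln|x-y||\,\rho(x)\rho(y)\,dx\,dy\ \ge\ \ln\!\Big(\frac{1}{2r}\Big)\,\mu^2.$$
Thus $\mu\le \big(H(\rho)/\ln(1/(2r))\big)^{1/2}$ whenever $\ln(1/(2r))>0$, i.e. for every $r<1/2$.

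It then remains to convert $\ln(1/(2r))$ into $|\ln r|$ and to take the supremum in $x_0$. For $0<r\le 1/4$ one has $\ln(1/(2r))=|\ln r|-\ln 2\ge \frac{1}{2}|\ln r|$ (since $|\ln r|\ge 2\ln 2$ on that range), so $\mu\le (2H(\rho))^{1/2}|\ln r|^{-1/2}$; the right-hand side being independent of $x_0$, the same bound holds for $\sup_{x_0}\int_{B(x_0,r)}\rho$. For the remaining range $1/4<r<1/2$ the estimate is trivial: $\int_{B(x_0,r)}\rho\le \int\rho$, while $|\ln r|^{-1/2}\ge (2\ln 2)^{-1/2}$, so the bound holds with a constant multiple of $\int\rho$. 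Taking $C=\max\{(2H(\rho))^{1/2},\,(2\ln 2)^{1/2}\int\rho\}$ yields the claim for all $0<r<1/2$.

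There is no serious obstacle here; the only point requiring care is the endpoint $r\to 1/2^-$, where $\ln(1/(2r))\to 0$ and the first estimate degenerates — which is precisely why one falls back on the crude total-mass bound on $[1/4,1/2)$. I note finally that the argument uses only $\int\rho$ and $H(\rho)$; the second moment $I$ appears in the statement because, in the applications (cf. Corollary \ref{coro:uni}), the finiteness of $H(\rho)=\iint|\ln|x-y||\rho\rho$ is itself guaranteed through the bound $\ln^+|x-y|\le |x|+|y|$ together with $I$ and $\int\rho$, so it is natural to let the final constant depend on $I$ as well.
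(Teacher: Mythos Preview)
Your argument is correct. Note that the paper does not actually supply a proof of this proposition: it simply quotes the result from Majda \cite{majda-93} (page 932). Your approach---restricting the double integral defining $H(\rho)$ to $B(x_0,r)\times B(x_0,r)$ and using $|\ln|x-y||\ge \ln(1/(2r))$ there---is precisely the standard one-line argument behind this non-concentration lemma, and your observation that the constant in fact depends only on $\int\rho$ and $H(\rho)$ (with $I$ entering only indirectly, to guarantee $H(\rho)<\infty$ in the applications) is accurate.
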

  In particular, it follows directly from Corollary \ref{coro:uni}  that
  \begin{proposition}\label{prop:delort-2}
   There exists $C>0$ such that
   \begin{equation*}
  \sup_{t\in \R_+}\sup_{0<\eps<1}\sup_{x_0\in \RR}    \sup_{0<r<1/2} |\ln r|^{1/2} \int_{B(x_0,r)}\rho_\eps(t,x)\,dx<+\infty.
   \end{equation*}
 
  \end{proposition}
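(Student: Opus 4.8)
The plan is to apply Proposition~\ref{prop:delort} with $\rho=\rho_\eps(t)$, for each fixed $t\in\R_+$ and $\eps\in(0,1)$, and then to verify that the constant it produces may be chosen independently of $t$ and $\eps$. Proposition~\ref{prop:delort} asserts that the relevant constant depends only on the three quantities $\int\rho_\eps(t)$, $I(\rho_\eps(t))=\int_{\RR}|x|^2\rho_\eps(t,x)\,dx$ and $H(\rho_\eps(t))=\iint_{\RR\times\RR}|\ln|x-y||\,\rho_\eps(t,x)\rho_\eps(t,y)\,dx\,dy$; since (as is clear from its statement) the conclusion in fact only uses upper bounds for these three quantities, it suffices to bound them uniformly in $(t,\eps)$ in order to obtain a single constant.

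The mass is conserved and bounded, $\int_{\RR}\rho_\eps(t,x)\,dx=\|f_\eps(t)\|_{L^1}=\|f_\eps^0\|_{L^1}$, by Proposition~\ref{prop:uni} and \eqref{hyp:ini-1-bis}, while the second moment $I(\rho_\eps(t))$ is bounded uniformly in $t$ and $\eps$ by the first estimate of Corollary~\ref{coro:uni}. For the logarithmic energy $H$ the only subtlety is that Corollary~\ref{coro:uni} controls the signed integral $|\iint\ln|x-y|\,\rho_\eps\rho_\eps|$ rather than the integral of $|\ln|x-y||$. To bridge this I would split $\ln=\ln_+-\ln_-$ and estimate the positive part by the second moment, using $\ln_+|x-y|\le|x-y|\le 1+|x|^2+|y|^2$, so that
\begin{equation*}
\iint_{\RR\times\RR}\ln_+|x-y|\,\rho_\eps\rho_\eps\,dx\,dy\le \Big(\int_{\RR}\rho_\eps\Big)^2+2\int_{\RR}\rho_\eps\int_{\RR}|x|^2\rho_\eps,
\end{equation*}
which is already bounded uniformly in $(t,\eps)$ by the above. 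Writing $\iint\ln_-=\iint\ln_+-\iint\ln$ and invoking the Corollary~\ref{coro:uni} bound on the signed integral then shows that $\iint\ln_-|x-y|\,\rho_\eps\rho_\eps$ is bounded as well; summing the two parts yields a uniform bound on $H(\rho_\eps(t))$.

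With the three quantities bounded uniformly in $(t,\eps)$, Proposition~\ref{prop:delort} furnishes a single constant $C$ such that $\int_{B(x_0,r)}\rho_\eps(t,x)\,dx\le C|\ln r|^{-1/2}$ for every $t\in\R_+$, $\eps\in(0,1)$, $x_0\in\RR$ and $0<r<1/2$; taking the suprema then gives the claim. The argument is essentially bookkeeping, and the only genuine (though minor) point is the passage from the signed logarithmic energy, as controlled by Corollary~\ref{coro:uni}, to the absolute one required to meet the hypothesis $H(\rho)<+\infty$ of Proposition~\ref{prop:delort}.
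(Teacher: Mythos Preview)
Your proof is correct and follows exactly the route the paper intends: the paper states that Proposition~\ref{prop:delort-2} ``follows directly from Corollary~\ref{coro:uni}'' via Proposition~\ref{prop:delort}, and you have simply spelled out that deduction. The one detail you make explicit --- passing from the signed bound $\big|\iint\ln|x-y|\,\rho_\eps\rho_\eps\big|$ provided by Corollary~\ref{coro:uni} to the unsigned quantity $H(\rho_\eps(t))$ required by Proposition~\ref{prop:delort}, via the $\ln_+/\ln_-$ splitting and the second-moment bound --- is left implicit in the paper but is indeed the only thing to check.
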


\subsection{Some estimates for the charge}

In this paragraph we focus on the dynamics of the charge by looking for estimates on the time integral  $\int E_\eps(\xi_\eps)\,dt$.
\begin{proposition}\label{prop:virial}
Let $(x,v)\in \R^2\times \R^2$. Then for all $t\in \R_+$,
\begin{equation*}\begin{split}
\frac{\gamma}{|X_\eps(t,x,v)-\xi_\eps(t)|}&\leq \eps^2\frac{d^2}{dt^2}|X_\eps(t,x,v)-\xi_\eps(t)|+\frac{|V_\eps(t,x,v)-\gamma \eta_\eps(t)|}{\eps}\\
&+|E_\eps(t,X_\eps(t,x,v)|+\gamma |E_\eps(t,\xi_\eps(t)|.\end{split}
\end{equation*}

\end{proposition}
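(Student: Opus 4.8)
The plan is to read this as a virial-type identity for the relative position
\[
D_\eps(t):=X_\eps(t,x,v)-\xi_\eps(t),
\]
in which the repulsive self-field of the charge produces precisely the term $\gamma/|D_\eps|$. I would first record that, by the regularity discussed in Subsection 2.1, the maps $t\mapsto(X_\eps,V_\eps)$ and $t\mapsto(\xi_\eps,\eta_\eps)$ are $C^1$; since the repulsion guarantees $X_\eps(t,x,v)\neq\xi_\eps(t)$ for all $t$, the scalar function $t\mapsto|D_\eps(t)|$ is strictly positive and in fact $C^2$ (because $|D_\eps|^2=D_\eps\cdot D_\eps$ is $C^2$ and stays away from $0$), so that $\frac{d^2}{dt^2}|D_\eps|$ in the statement makes sense.

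Next I would differentiate, using \eqref{syst:ODE} for $(X_\eps,V_\eps)$ and the charge equations in \eqref{syst:VP-bis} for $(\xi_\eps,\eta_\eps)$, to obtain $\dot D_\eps=\eps^{-1}(V_\eps-\eta_\eps)$ and, after inserting $L_\eps(X_\eps)=\gamma D_\eps/|D_\eps|^2$,
\[
\ddot D_\eps=\frac{1}{\eps^3}\Big[(V_\eps^\perp-\gamma\eta_\eps^\perp)+\eps\big(E_\eps(X_\eps)-\gamma E_\eps(\xi_\eps)\big)\Big]+\frac{\gamma}{\eps^2}\frac{D_\eps}{|D_\eps|^2}.
\]
Combining this with the elementary identity
\[
\frac{d^2}{dt^2}|D_\eps|=\frac{|\dot D_\eps|^2+D_\eps\cdot\ddot D_\eps}{|D_\eps|}-\frac{(D_\eps\cdot\dot D_\eps)^2}{|D_\eps|^3}
\]
and multiplying by $\eps^2$, the \emph{crucial point} is that the singular term contributes exactly $\eps^2\cdot\frac{\gamma}{\eps^2}\cdot\frac{D_\eps\cdot D_\eps}{|D_\eps|^2\,|D_\eps|}=\gamma/|D_\eps|$, since $D_\eps\cdot D_\eps=|D_\eps|^2$. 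Solving the resulting identity for $\gamma/|D_\eps|$ leaves three remaining contributions on the right.

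Finally I would estimate those. The term coming from $-\eps^2\big(|\dot D_\eps|^2/|D_\eps|-(D_\eps\cdot\dot D_\eps)^2/|D_\eps|^3\big)$ is non-positive by Cauchy--Schwarz, hence may simply be dropped to produce an inequality; recognizing this favorable sign is the one subtle point of the argument. For the two cross terms I would use $|D_\eps\cdot w|\le|D_\eps|\,|w|$ together with $|w^\perp|=|w|$, giving
\[
\Big|\frac{1}{\eps}\,\frac{D_\eps\cdot(V_\eps^\perp-\gamma\eta_\eps^\perp)}{|D_\eps|}\Big|\le\frac{|V_\eps-\gamma\eta_\eps|}{\eps},\qquad
\Big|\frac{D_\eps\cdot(E_\eps(X_\eps)-\gamma E_\eps(\xi_\eps))}{|D_\eps|}\Big|\le|E_\eps(X_\eps)|+\gamma|E_\eps(\xi_\eps)|.
\]
Putting these bounds together yields exactly the claimed inequality. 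I do not expect any genuine obstacle: the content is the virial algebra that isolates $\gamma/|D_\eps|$ and the sign of the discarded Cauchy--Schwarz remainder, while everything else is routine bounding, the only care needed being the $C^2$ regularity of $|D_\eps|$ and the non-vanishing of $D_\eps$.
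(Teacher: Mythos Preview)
Your proposal is correct and follows essentially the same approach as the paper: both compute the second time derivative of $|X_\eps-\xi_\eps|$, isolate the repulsive contribution $\gamma/|X_\eps-\xi_\eps|$ from $L_\eps$, drop the non-negative Cauchy--Schwarz remainder $|\dot D_\eps|^2/|D_\eps|-(D_\eps\cdot\dot D_\eps)^2/|D_\eps|^3$, and bound the remaining cross terms by $|D_\eps\cdot w|\le|D_\eps||w|$. The only differences are cosmetic---you introduce the shorthand $D_\eps$ and compute $\ddot D_\eps$ first before applying the identity for $\tfrac{d^2}{dt^2}|D_\eps|$, whereas the paper differentiates $\tfrac{d}{dt}|X_\eps-\xi_\eps|$ directly---but the algebra and the key sign observation are identical.
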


\begin{proof}We compute, writing $(X_\eps,V_\eps)=(X_\eps(t,x,v),V_\eps(t,x,v))$ for simplicity,
\begin{equation*}
\begin{split}
\frac{d}{dt}|X_\eps-\xi_\eps|=\left(\frac{X_\eps-\xi_\eps}{|X_\eps-\xi_\eps|},\frac{V_\eps-\eta_\eps}{\eps}\right),
\end{split}
\end{equation*}
so
\begin{equation*}
\begin{split}
\frac{d^2}{dt^2}|X_\eps-\xi_\eps|
&=\frac{|V_\eps-\eta_\eps|^2}{\eps^2 |X_\eps-\xi_\eps|}+\frac{1}{\eps^3}
\left( \frac{X_\eps-\xi_\eps}{|X_\eps-\xi_\eps|},V_\eps^\perp-\gamma \eta_\eps^\perp\right)\\
&+\frac{1}{\eps^2}\left(\frac{X_\eps-\xi_\eps}{|X_\eps-\xi_\eps|},E_\eps(X_\eps)-\gamma E_\eps(\xi_\eps)\right)+\frac{\gamma}{\eps^2}\frac{1}{|X_\eps-\xi_\eps|}\\
&-\frac{1}{\eps}\frac{\left(X_\eps-\xi_\eps,V_\eps-\eta_\eps\right)}{|X_\eps-\xi_\eps|^2}\left(\frac{X_\eps-\xi_\eps}{|X_\eps-\xi_\eps|},\frac{V_\eps-\eta_\eps}{\eps}\right),
\end{split}
\end{equation*}
therefore
\begin{equation*}
\begin{split}
\frac{d^2}{dt^2}|X_\eps-\xi_\eps|&\geq 
\frac{|V_\eps-\eta_\eps|^2}{\eps^2 |X_\eps-\xi_\eps|}-\frac{1}{\eps^2}
\frac{\left(X_\eps-\xi_\eps,V_\eps-\eta_\eps\right)^2}{|X_\eps-\xi_\eps|^3}\\
&-\frac{1}{\eps^3}|V_\eps-\gamma \eta_\eps|-\frac{1}{\eps^2}|E_\eps(X_\eps)|-
\frac{\gamma}{\eps^2}|E_\eps(\xi_\eps)|+\frac{\gamma}{\eps^2}\frac{1}{|X_\eps-\xi_\eps|}
\end{split}
\end{equation*}
and the conclusion follows.
\end{proof}

\begin{corollary}\label{coro:virial}We have
\begin{equation*}
\begin{split}
|E_\eps(t,\xi_\eps(t))|&\leq C\eps^2 \frac{d^2}{dt^2}\iint_{\RR\times \RR}|x-\xi_\eps(t)|f_\eps(t,x,v)\,dx\,dv+\frac{C}{\eps}.
\end{split}
\end{equation*}
\end{corollary}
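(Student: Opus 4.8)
The plan is to integrate the pointwise inequality of Proposition~\ref{prop:virial} against $f_\eps^0\,dx\,dv$ and to exploit the pushforward structure \eqref{push-forward} together with the smallness assumption \eqref{hyp:ini-small}. First I would rewrite the quantity to be estimated using \eqref{push-forward}: since $E_\eps(t,z)=\int_{\RR}\frac{z-y}{|z-y|^2}\rho_\eps(t,y)\,dy$, evaluating at $z=\xi_\eps(t)$, bounding the kernel by $1/|\xi_\eps-y|$, and using $\gamma>0$ together with \eqref{push-forward} gives
\begin{equation*}
|E_\eps(t,\xi_\eps(t))|\le \int_{\RR}\frac{\rho_\eps(t,y)}{|\xi_\eps(t)-y|}\,dy=\frac{1}{\gamma}\iint_{\RR\times\RR}\frac{\gamma}{|X_\eps(t,x,v)-\xi_\eps(t)|}\,f_\eps^0(x,v)\,dx\,dv,
\end{equation*}
the integrand being finite for a.e. $(x,v)\in\supp f_\eps^0$ thanks to the non-intersection property recorded after \eqref{syst:ODE}. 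I would then bound this integrand by the right-hand side of Proposition~\ref{prop:virial} and integrate term by term.

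Next I would treat the four resulting terms. The curvature term yields, after interchanging $\tfrac{d^2}{dt^2}$ with the $(x,v)$-integration and using \eqref{push-forward} once more,
\begin{equation*}
\frac{\eps^2}{\gamma}\iint_{\RR\times\RR} \frac{d^2}{dt^2}|X_\eps(t,x,v)-\xi_\eps(t)|\,f_\eps^0\,dx\,dv=\frac{\eps^2}{\gamma}\frac{d^2}{dt^2}\iint_{\RR\times\RR}|x-\xi_\eps(t)|\,f_\eps(t,x,v)\,dx\,dv,
\end{equation*}
which is precisely the first term on the right of the corollary. The crucial observation is that the last term of Proposition~\ref{prop:virial} integrates to $\|f_\eps^0\|_{L^1}\,|E_\eps(t,\xi_\eps(t))|$, i.e.\ a multiple of the very quantity being estimated; by \eqref{hyp:ini-small} we have $\sup_\eps\|f_\eps^0\|_{L^1}<1$, so this contribution can be absorbed into the left-hand side at the cost of the uniformly bounded factor $(1-\|f_\eps^0\|_{L^1})^{-1}$.

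It then remains to bound the two intermediate terms by $C/\eps$. For the velocity term, Cauchy--Schwarz and \eqref{push-forward} give
\begin{equation*}
\int_{\RR\times\RR}|V_\eps-\gamma\eta_\eps|\,f_\eps^0\,dx\,dv\le \Big(\iint|v|^2f_\eps\,dx\,dv\Big)^{1/2}\|f_\eps^0\|_{L^1}^{1/2}+\gamma|\eta_\eps|\,\|f_\eps^0\|_{L^1},
\end{equation*}
which is uniformly bounded by Corollary~\ref{coro:uni} and \eqref{hyp:ini-1-bis}, producing the factor $1/\eps$ already present. For the field term I would use \eqref{push-forward} to write $\int|E_\eps(t,X_\eps)|f_\eps^0=\int_{\RR}|E_\eps(t,x)|\rho_\eps(t,x)\,dx$ and split $E_\eps=(E_\eps-\overline E_\eps)+\overline E_\eps$: the bounded part contributes $\|\overline E_\eps\|_{L^\infty}\|\rho_\eps\|_{L^1}\le C$, while the square-integrable part is controlled by $\|E_\eps-\overline E_\eps\|_{L^2}\|\rho_\eps\|_{L^2}\le C\|f_\eps^0\|_{L^\infty}^{1/2}$ via \eqref{ineq:e-barre} and Corollary~\ref{coro:uni}. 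Finally \eqref{hyp:ini-2-bis} gives $\|f_\eps^0\|_{L^\infty}^{1/2}\le C/\eps$, so this term is also $O(1/\eps)$. Collecting the estimates and dividing by $1-\|f_\eps^0\|_{L^1}$ yields the corollary.

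The main obstacle, I expect, is twofold. On the technical side, one must make rigorous the interchange of $\tfrac{d^2}{dt^2}$ with the $(x,v)$-integration, which relies on the regularity of the flow and the compact support of $f_\eps^0$ and is cleanest when the inequality is read in the distributional sense in $t$. On the conceptual side, one must recognize the self-referential structure of the fourth term, which is what makes the smallness condition \eqref{hyp:ini-small} indispensable for the absorption step; the remaining estimates are then routine applications of the a priori bounds already established.
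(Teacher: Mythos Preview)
Your proposal is correct and follows essentially the same approach as the paper: integrate the pointwise inequality of Proposition~\ref{prop:virial} against $f_\eps^0$, use the push-forward \eqref{push-forward} to rewrite each term, bound the velocity and field contributions via Cauchy--Schwarz and the splitting $E_\eps=(E_\eps-\overline E_\eps)+\overline E_\eps$, and absorb the self-referential term thanks to \eqref{hyp:ini-small}. The only cosmetic difference is that the paper carries out the absorption at the level of $\gamma\int\rho_\eps/|x-\xi_\eps|\,dx$ before passing to $|E_\eps(\xi_\eps)|$, whereas you absorb directly on $|E_\eps(\xi_\eps)|$; this is equivalent.
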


\begin{proof}
Integrating the inequality given by Proposition \ref{prop:virial} with respect to the measure $f_\eps^0(x,v)\,dx\,dv$ we get after changing variable
\begin{equation*}
\begin{split}
\gamma \int_{\RR}\frac{\rho_\eps(t,x)}{|x-\xi_\eps(t)|}\,dx&\leq
\eps^2\frac{d^2}{dt^2}\iint_{\RR\times \RR}
|x-\xi_\eps(t)|f_\eps(t,x,v)\,dx\,dv\\&+\frac{1}{\eps}\iint_{\RR\times \RR}
|v-\gamma \eta_\eps|f_\eps(t,x,v)\,dx\,dv\\
&+\int_{\RR}|E_\eps(t,x)|\rho_\eps(t,x)\,dx+\|f_\eps^0\|_{L^1}\gamma |E_\eps(t,\xi_\eps)|.
\end{split}
\end{equation*}
On the one hand, we have by \eqref{ineq:e-barre}
\begin{equation*}\begin{split}
\int_{\RR}|E_\eps(t,x)|\rho_\eps(t,x)\,dx
&\leq \|E_\eps(t)-\overline{E}_\eps\|_{L^2}\|\rho_\eps(t)\|_{L^2}
+\|\overline{E}_\eps\|_{L^\infty}\|f_\eps^0\|_{L^1}\\
&\leq C(\|f_\eps^0\|_{L^\infty}^{1/2}+\|f_\eps^0\|_{L^1})\leq \frac{C}{\eps},\end{split}
\end{equation*}where we have used the bounds \eqref{hyp:ini-1-bis} and  \eqref{hyp:ini-2-bis} in the last inequality.

On the other hand, Corollary \ref{coro:uni} yields by Cauchy-Schwarz inequality
\begin{equation*}
\begin{split}
&\iint_{\RR\times \RR}|v-\gamma \eta_\eps(t)|f_\eps(t,x,v)\,dx\,dv\\&\leq C \left(\iint_{\RR\times \RR}|v|^2f_\eps(t,x,v)\,dx\,dv+|\eta_\eps(t)|^2\|f_\eps^0\|_{L^1}\right)^{1/2}\|f_\eps(t)\|_{L^1}^{1/2}
\leq C.
\end{split}
\end{equation*}

Finally,
\begin{equation*}
\|f_\eps^0\|_{L^1}\gamma
|E_\eps(t,\xi_\eps)|\leq \sup_{0<\eps<1}\|f_\eps^0\|_{L^1}\gamma \int_{\RR}\frac{\rho_\eps(t,x)}{|x-\xi_\eps(t)|}\,dx,
\end{equation*}thus we obtain the desired estimate in view of the assumption \eqref{hyp:ini-small}.

\end{proof}

\begin{corollary}
 \label{coro:virial-3}We have 
\begin{equation*}
\begin{split}
\int_s^t |E_\eps(\tau,\xi_\eps(\tau))|\,d\tau&\leq C\eps +\frac{C}{\eps}(t-s).
\end{split}
\end{equation*}
\end{corollary}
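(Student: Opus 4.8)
The plan is to integrate in time the pointwise-in-$t$ bound furnished by Corollary \ref{coro:virial} and to control the resulting boundary term. Introduce the virial-type quantity
$$
m_\eps(\tau)=\iint_{\RR\times \RR}|x-\xi_\eps(\tau)|\,f_\eps(\tau,x,v)\,dx\,dv,
$$
so that Corollary \ref{coro:virial} reads $|E_\eps(\tau,\xi_\eps(\tau))|\leq C\eps^2\, m_\eps''(\tau)+C/\eps$. Integrating this inequality over $[s,t]$ and applying the fundamental theorem of calculus to the second derivative gives
$$
\int_s^t|E_\eps(\tau,\xi_\eps(\tau))|\,d\tau\leq C\eps^2\bigl(m_\eps'(t)-m_\eps'(s)\bigr)+\frac{C}{\eps}(t-s).
$$
It therefore suffices to show that $|m_\eps'(\tau)|\leq C/\eps$ uniformly in $\tau\in\R_+$ and $0<\eps<1$, for then the boundary contribution is bounded by $C\eps^2\cdot(2C/\eps)=C\eps$, which is precisely the asserted estimate.

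To estimate $m_\eps'$, I would use the push-forward representation \eqref{push-forward} to write $m_\eps(\tau)=\iint|X_\eps(\tau,x,v)-\xi_\eps(\tau)|f_\eps^0(x,v)\,dx\,dv$ and differentiate under the integral sign. The first-derivative computation already carried out at the start of the proof of Proposition \ref{prop:virial} gives
$$
\frac{d}{d\tau}|X_\eps-\xi_\eps|=\left(\frac{X_\eps-\xi_\eps}{|X_\eps-\xi_\eps|},\frac{V_\eps-\eta_\eps}{\eps}\right),
$$
whence $\bigl|\tfrac{d}{d\tau}|X_\eps-\xi_\eps|\bigr|\leq |V_\eps-\eta_\eps|/\eps$. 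Changing variables back to Eulerian coordinates yields
$$
|m_\eps'(\tau)|\leq\frac{1}{\eps}\iint_{\RR\times \RR}|v-\eta_\eps(\tau)|\,f_\eps(\tau,x,v)\,dx\,dv.
$$
The velocity integral on the right is $O(1)$ by exactly the Cauchy--Schwarz argument already used for the term $\iint|v-\gamma\eta_\eps|f_\eps$ in the proof of Corollary \ref{coro:virial}: one splits $|v-\eta_\eps|\leq |v|+|\eta_\eps|$, bounds $\iint|v|f_\eps\leq(\iint|v|^2f_\eps)^{1/2}\|f_\eps\|_{L^1}^{1/2}$, and invokes the uniform bounds on the kinetic energy, on $|\eta_\eps|$ and on $\|f_\eps\|_{L^1}=\|f_\eps^0\|_{L^1}$ supplied by Corollary \ref{coro:uni} and Proposition \ref{prop:uni}. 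This gives $|m_\eps'(\tau)|\leq C/\eps$ and closes the argument.

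The only delicate point is the legitimacy of writing $\int_s^t m_\eps''\,d\tau=m_\eps'(t)-m_\eps'(s)$, i.e.\ the regularity needed for differentiation under the integral and for the fundamental theorem of calculus; this is where I expect the (mild) technical work to lie. It rests on the flow regularity established in the Lagrangian-trajectories subsection: for each fixed $\eps$ the maps $\tau\mapsto(X_\eps,V_\eps)$ are $C^1$, and since $f_\eps^0$ is compactly supported in $\{|x-\xi_\eps^0|\geq\delta_\eps\}$ the repulsive interaction keeps $X_\eps(\tau,x,v)$ uniformly away from $\xi_\eps(\tau)$ on the support, so $|X_\eps-\xi_\eps|$ is a $C^2$ function of $\tau$ there with derivatives controlled on compact sets; together with the continuity in time of $E_\eps$ this makes $m_\eps$ of class $C^2$ and justifies all the manipulations above. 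Note that only the sign of the boundary term and the uniform bound $|m_\eps'|\leq C/\eps$ are used, so no finer information on $m_\eps''$ is required, and the $\eps$-uniformity enters solely through the $O(1)$ and $O(1/\eps)$ estimates, not through the differentiability justification.
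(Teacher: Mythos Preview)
Your proof is correct and follows essentially the same route as the paper: both define the virial quantity $m_\eps=I_\eps$, integrate the inequality of Corollary \ref{coro:virial} over $[s,t]$, and bound the resulting boundary term via $|m_\eps'(\tau)|\leq C/\eps$ using the Lagrangian representation and the uniform kinetic-energy and $|\eta_\eps|$ bounds from Corollary \ref{coro:uni}. Your additional discussion of the regularity needed to justify the fundamental theorem of calculus is a welcome clarification that the paper leaves implicit.
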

\begin{proof}
We set 
$$I_\eps(t)=\iint_{\RR\times \RR}|x-\xi_\eps(t)|f_\eps(t,x,v)\,dx\,dv=
\iint_{\RR\times \RR}|X_\eps(t,x,v)-\xi_\eps(t)|f_\eps^0(x,v)\,dx\,dv,$$ so that by the system \eqref{syst:ODE}, using again the estimates of Corollary \ref{coro:uni} we get
\begin{equation*}\label{ineq:der-I}
\left|\frac{d}{dt}I_\eps(t)\right|\leq \frac{1}{\eps}\iint_{\RR\times \RR}|V_\eps(t,x,v)-\eta_\eps(t)|f_\eps^0(x,v)\,dx\,dv\leq \frac{C}{\eps}.
\end{equation*}
Then, integrating in time the inequality in Corollary \ref{coro:virial} yields
 \begin{equation*}
  \begin{split}
   \int_s^t |E_\eps(\tau,\xi_\eps(\tau))|&\leq C\eps^2\left(\left|\frac{d I_\eps}{dt}(t)\right|+\left|\frac{d I_\eps}{dt}(s)\right|\right)+\frac{C(t-s)}{\eps},
  \end{split}
\end{equation*}which implies the claim of the corollary.
\end{proof}

\subsection{Weak formulation}In order to study the asymptotical equation for \eqref{syst:VP-bis},
 we reexpress the system \eqref{syst:VP-bis}, using a weak formulation that was derived in 
\cite{golse-sr} as a starting point  in the case without charge.

\begin{proposition}
 \label{prop:weak-formulation-2}
 We have\begin{equation*}\begin{split}
                          \partial_t \rho_\eps&+\nabla_x\cdot\left((E_\eps^\perp+L_\eps^\perp)\rho_\eps \right)\\
                          &=\nabla_x\cdot \left( \left[\nabla_x\cdot \int_{\R^2}v\otimes v f_\eps \,dv\right]^\perp\right)+\eps \nabla_x\cdot \partial_t \int_{\RR}v^\perp f_\eps \,dv,
                          \end{split}
                          \end{equation*}where we recall the definition \eqref{def:L} for $L_\eps$.
\end{proposition}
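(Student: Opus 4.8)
The plan is to run the classical method of moments in the velocity variable, exactly as in \cite{golse-sr} for the charge-free case, simply carrying along the extra singular force $L_\eps$. Since $\fe$ has compact support in $v$ (by the representation \eqref{push-forward}), the current $j_\eps=\int_{\RR}v\fe\,dv$ and the stress tensor $\int_{\RR}v\ot v\fe\,dv$ are well-defined functions in $L^\infty_\loc(\R_+,L^1\cap L^\infty(\RR))$, the Vlasov equation in \eqref{syst:VP-bis} holds in the sense of distributions on $\R_+\times\RR\times\RR$, and all the integrations by parts in $v$ performed below produce no boundary terms. Throughout I write the total force as $F_\eps=v^\perp/\eps^2+(E_\eps+L_\eps)/\eps$.

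First I would take the zeroth moment, integrating the equation over $v\in\RR$. The term $\int_{\RR}F_\eps\cdot\nabla_v\fe\,dv$ vanishes after integration by parts in $v$: indeed $E_\eps+L_\eps$ is independent of $v$, while $\nabla_v\cdot v^\perp=0$. This produces the continuity equation
$$\dt\rho_\eps+\frac1\eps\nabla_x\cdot j_\eps=0.$$
Next I would take the first moment, multiplying by $v$ and integrating. Integrating the force term by parts in $v$ and using once more $\nabla_v\cdot v^\perp=0$, only the $\delta_{ij}$-contribution survives, giving $\int_{\RR}v\,(F_\eps\cdot\nabla_v\fe)\,dv=-\eps^{-2}j_\eps^\perp-\eps^{-1}(E_\eps+L_\eps)\rho_\eps$ (with $\int_{\RR}v^\perp\fe\,dv=j_\eps^\perp$). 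This yields the momentum balance
$$\dt j_\eps+\frac1\eps\nabla_x\cdot\int_{\RR}v\ot v\,\fe\,dv-\frac1{\eps^2}j_\eps^\perp-\frac1\eps(E_\eps+L_\eps)\rho_\eps=0.$$

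The last step is purely algebraic. Solving the momentum balance for $j_\eps^\perp$, multiplying by $\eps^2$, and inverting the rotation via $(w^\perp)^\perp=-w$ expresses the current as
$$\frac1\eps j_\eps=-\Bigl(\eps\,\dt j_\eps+\nabla_x\cdot\!\int_{\RR}v\ot v\,\fe\,dv-(E_\eps+L_\eps)\rho_\eps\Bigr)^{\!\perp}.$$
Taking $\nabla_x\cdot$ of both sides, substituting $\nabla_x\cdot(\eps^{-1}j_\eps)=-\dt\rho_\eps$ from the continuity equation, and using the identities $((E_\eps+L_\eps)\rho_\eps)^\perp=(E_\eps^\perp+L_\eps^\perp)\rho_\eps$ and $(\dt j_\eps)^\perp=\dt\int_{\RR}v^\perp\fe\,dv$, I would move the field term to the left-hand side to obtain exactly the claimed identity.

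The computation is simultaneously formal and rigorous because every object involved is an honest function, so the distributional manipulations (the algebraic inversion, the application of $\nabla_x\cdot$, and the splitting of the perpendicular terms) are all legitimate. The only point requiring a word of care, and the main difference from \cite{golse-sr}, is the singular field $L_\eps$: by the repulsive dynamics recalled after \eqref{syst:ODE}, $\supp\rho_\eps(t)$ stays away from $\xi_\eps(t)$, so that $L_\eps\rho_\eps$ and $L_\eps^\perp\rho_\eps$ are bounded and integrable and all products above are well-defined. This is the principal (and mild) obstacle; the remainder is the standard moment algebra.
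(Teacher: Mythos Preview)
Your proof is correct and follows exactly the approach the paper intends: the paper's own proof merely cites equations (3.8)--(3.9) of \cite{golse-sr} and says to substitute $E_\eps$ by $E_\eps+L_\eps$, which is precisely the moment computation you carry out. Your additional remark that $L_\eps\rho_\eps$ is well-defined because the support of $\rho_\eps(t)$ stays away from $\xi_\eps(t)$ is a useful justification that the paper leaves implicit.
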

\begin{proof}
 See the equations (3.8) and (3.9) in the proof of Lemma 3.2 in \cite{golse-sr}, substituting $E_\eps$ by $E_\eps+L_\eps$.
\end{proof}

In order to deal with the singular term $L_\eps^\perp \rho_\eps $ as $\eps$ tends to zero, for $\rho_\eps$ converging to a Radon measure,  we shall actually symmetrize the nonlinear 
term as 
in Definition \ref{def:poupaud} with
respect to the total measure $\rho_\eps+ \gamma \delta_{\xi_\eps}$. This can be done
 by taking into account the dynamics of the charge.
\begin{proposition}
 \label{prop:weak-formulation-3}
 Let $\Phi \in C_c^\infty(\R_+\times  \R^2)$. We have for all $t\geq 0$
 \begin{equation*}\begin{split}
 &\int_{\RR}\Phi(t,x)\rho_\eps(t,x)\,dx+ \gamma \Phi(t,\xi_\eps(t))-\int_{\RR}\Phi(0,x)\rho_\eps(0,x)\,dx-\gamma \Phi(0,\xi_\eps(0))\\
 &=\int_0^t \int_{\RR}\partial_t\Phi(s,x) \rho_\eps(s,x)\,ds\,dx+\gamma\int_0^t \partial_t \Phi(s,\xi_\eps(s))\,ds\\
 &+\int_0^t \mathcal{H}_{\Phi(s,\cdot)}[\rho_\eps(s,\cdot)+ \gamma\delta_{\xi_\eps(s)},\rho_\eps(s,\cdot)+ \gamma\delta_{\xi_\eps(s)}]\,ds\\
 &-\int_0^t \int_{\RR}\left(D\nabla^\perp\Phi(s,x):\int_{\RR}v\otimes v f_\eps(s,x,v)\,dv\right)\,dx\,ds\\
 &- \int_0^t \eta_\eps(s)\cdot (D\nabla^\perp \Phi(s,\xi_\eps(s))\: \eta_\eps(s))\,ds\\
&+ R_\eps(t),
                          \end{split}
                          \end{equation*}
 where
 \begin{equation*}
  |R_\eps(t)|\leq C\|\Phi\|_{W^{3,\infty}(\R_+\times \RR)}(1+t)\eps.
 \end{equation*}
We recall that $\mathcal{H}_{\Phi}[\cdot,\cdot]$ is defined in Definition \ref{def:poupaud}.

\end{proposition}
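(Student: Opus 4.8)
The plan is to integrate the conservation law of Proposition~\ref{prop:weak-formulation-2} against a test function, add by hand the contribution of the Dirac mass $\gamma\delta_{\xi_\eps}$ coming from the charge dynamics, and then reorganize the two field contributions into the single symmetric bracket $\mathcal{H}_{\Phi}[\rho_\eps+\gamma\delta_{\xi_\eps},\rho_\eps+\gamma\delta_{\xi_\eps}]$. First I would test the equation of Proposition~\ref{prop:weak-formulation-2} against $\Phi(s,\cdot)$, integrate by parts in $x$, and integrate on $[0,t]$; this produces $\int\partial_t\Phi\,\rho_\eps$, the nonlinear term $\int(E_\eps^\perp+L_\eps^\perp)\cdot\nabla\Phi\,\rho_\eps$, the stress term $-\int D\nabla^\perp\Phi:\int v\otimes v\,f_\eps\,dv$ (after a second integration by parts, using $w^\perp\cdot\nabla\Phi=-w\cdot\nabla^\perp\Phi$), and an $\eps$-term $-\eps\int\partial_t\big(\int v^\perp f_\eps\,dv\big)\cdot\nabla\Phi$. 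In parallel, differentiating $\gamma\Phi(t,\xi_\eps(t))$ and using $\dot\xi_\eps=\eta_\eps/\eps$ gives $\gamma\partial_t\Phi(\xi_\eps)+\tfrac{\gamma}{\eps}\nabla\Phi(\xi_\eps)\cdot\eta_\eps$.

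The core algebraic step is the symmetrization. Writing $E_\eps^\perp(x)=\int\frac{(x-y)^\perp}{|x-y|^2}\rho_\eps(y)\,dy$ and symmetrizing in $(x,y)$ identifies $\int E_\eps^\perp\cdot\nabla\Phi\,\rho_\eps=\mathcal{H}_{\Phi}[\rho_\eps,\rho_\eps]$, as in Proposition~\ref{prop:symm}. For the charge field I would expand the cross term of $\mathcal{H}_{\Phi}[\rho_\eps+\gamma\delta_{\xi_\eps},\rho_\eps+\gamma\delta_{\xi_\eps}]$, which by symmetry of $H_\Phi$ and $H_\Phi(\xi_\eps,\xi_\eps)=0$ equals $\gamma\int H_\Phi(x,\xi_\eps)\,\rho_\eps\,dx$, and compare it with $\int L_\eps^\perp\cdot\nabla\Phi\,\rho_\eps$. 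The two differ only through the value of $\nabla\Phi$ at the charge: using the crucial identity $\int\frac{(x-\xi_\eps)^\perp}{|x-\xi_\eps|^2}\rho_\eps(x)\,dx=-E_\eps(t,\xi_\eps)^\perp$ (the self-consistent field evaluated at the charge), one obtains
\[
\int(E_\eps^\perp+L_\eps^\perp)\cdot\nabla\Phi\,\rho_\eps=\mathcal{H}_{\Phi}[\rho_\eps+\gamma\delta_{\xi_\eps},\rho_\eps+\gamma\delta_{\xi_\eps}]-\gamma\,\nabla\Phi(\xi_\eps)\cdot E_\eps(t,\xi_\eps)^\perp .
\]

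The hard part is to absorb the singular $O(1/\eps)$ term $\tfrac{\gamma}{\eps}\nabla\Phi(\xi_\eps)\cdot\eta_\eps$ coming from the fast gyration of the charge. Here I would invert the rotation in the velocity equation of the charge, which yields $\eta_\eps/\eps=E_\eps(t,\xi_\eps)^\perp-\tfrac{\eps}{\gamma}\dot\eta_\eps^\perp$. Substituting, the $E_\eps(t,\xi_\eps)^\perp$ part cancels \emph{exactly} the non-symmetric residue of the previous display, while the remaining contribution $-\eps\,\nabla\Phi(\xi_\eps)\cdot\dot\eta_\eps^\perp$ is integrated by parts in time. The boundary terms and the interior term carrying $\nabla\partial_t\Phi$ are $O(\eps)$ by the bound $\sup_{t,\eps}|\eta_\eps(t)|<\infty$ of Corollary~\ref{coro:uni}, whereas the term involving $\tfrac{d}{ds}\nabla\Phi(s,\xi_\eps)=\nabla\partial_t\Phi+\tfrac{1}{\eps}D\nabla\Phi\,\eta_\eps$ produces, through its $\tfrac1\eps$ part, the quadratic charge contribution via the pointwise identity $(D\nabla\Phi\,\eta_\eps)\cdot\eta_\eps^\perp=-\eta_\eps\cdot(D\nabla^\perp\Phi\,\eta_\eps)$ (a direct computation with the Hessian of $\Phi$), giving precisely $-\int_0^t\eta_\eps\cdot(D\nabla^\perp\Phi(\xi_\eps)\eta_\eps)\,ds$.

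Finally I would collect into $R_\eps(t)$ all the $O(\eps)$ remainders: the time–boundary terms in $\eta_\eps$ and in $j_\eps:=\int v^\perp f_\eps\,dv$, together with the interior terms carrying $\nabla\partial_t\Phi$ (the latter coming both from the $\eps$-term of Proposition~\ref{prop:weak-formulation-2} after an integration by parts in time and from the charge computation). Each is estimated using Corollary~\ref{coro:uni}: $|\eta_\eps|\le C$, and $\int|v|^2 f_\eps\le C$ combined with the mass bound gives $\int|j_\eps|\,dx\le C$ by Cauchy--Schwarz, whence $|R_\eps(t)|\le C\|\Phi\|_{W^{3,\infty}(\R_+\times\RR)}(1+t)\eps$, the successive integrations by parts in space and time accounting for the derivatives of $\Phi$ in that norm. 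The computation is otherwise routine; the only genuinely delicate point is the exact cancellation of the $1/\eps$ gyration term against the non-symmetric residue of the plasma--charge interaction, which is exactly what forces the symmetrization to be carried out with respect to the total vorticity $\rho_\eps+\gamma\delta_{\xi_\eps}$ rather than $\rho_\eps$ alone.
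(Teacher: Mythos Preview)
Your argument is correct and in fact somewhat cleaner than the paper's. The paper reaches the same formula by introducing the guiding-center variable $h_\eps=\xi_\eps+\frac{\eps}{\gamma}\eta_\eps^\perp$, for which $\dot h_\eps=E_\eps^\perp(\xi_\eps)$, and computing $\gamma\Phi(t,\xi_\eps(t))$ via $\gamma\Phi(t,h_\eps(t))$. This produces the cross term $\gamma E_\eps^\perp(\xi_\eps)\cdot\nabla\Phi(\xi_\eps)$ needed for the symmetrization, but leaves a residue $\gamma\int_0^t E_\eps^\perp(\xi_\eps)\cdot[\nabla\Phi(h_\eps)-\nabla\Phi(\xi_\eps)]\,ds$ that the paper Taylor-expands to second order; the quadratic remainder is controlled via Corollary~\ref{coro:virial-3} (the virial-type bound on $\int_0^t|E_\eps(\xi_\eps)|\,ds$), and the linear part is then converted into the $\eta_\eps\cdot D\nabla^\perp\Phi\,\eta_\eps$ term through a further integration by parts and a symmetry trick ($J=-J+O(\eps)$ using $\operatorname{curl}\nabla\Phi=0$). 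Your route bypasses all of this: you invert the charge ODE once, $\eta_\eps/\eps=E_\eps^\perp(\xi_\eps)-\tfrac{\eps}{\gamma}\dot\eta_\eps^\perp$, so that the non-symmetric residue cancels \emph{before} any Taylor expansion, and a single integration by parts in time on $-\eps\int\nabla\Phi(\xi_\eps)\cdot\dot\eta_\eps^\perp$ delivers the quadratic term via $(D\nabla\Phi\,\eta)\cdot\eta^\perp=-\eta\cdot(D\nabla^\perp\Phi\,\eta)$. As a bonus, your remainders only involve second derivatives of $\Phi$, so Corollary~\ref{coro:virial-3} is not needed here at all. Both approaches are valid; yours is more direct.
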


\begin{proof}
We apply Proposition \ref{prop:weak-formulation-2} with the test function $\Phi$. After symmetrizing the term $ E_\eps^\perp \rho_\eps$ as in Definition \ref{def:poupaud}, we obtain
 \begin{equation}\label{eq:w1}\begin{split}
 &\int_{\RR}\Phi(t,x)\rho_\eps(t,x)\,dx-\int_{\RR}\Phi(0,x)\rho_\eps(0,x)\,dx
 =\int_0^t \int_{\RR}\partial_t\Phi(s,x) \rho_\eps(s,x)\,ds\,dx\\
 &+\int_0^t \mathcal{H}_{\Phi(s,\cdot)}[\rho_\eps(s),\rho_\eps(s)]\,ds\\
 &+\int_0^t \int_{\RR} L_\eps^\perp(s,x)\cdot \nabla \Phi(s,x)\rho_\eps(s,x)\,dx\,ds\\
 &-\int_0^t \int_{\RR}\left(D\nabla^\perp\Phi(s,x):\int_{\RR}v\otimes v f_\eps(s,x,v)\,dv\right)\,dx\,ds+R_{\eps}^1,\end{split}\end{equation}where
 \begin{equation*}\begin{split}
  R_{\eps}^1 &=\eps \int_0^t \iint_{\RR}  f_\eps(s,x,v)v^\perp \cdot \partial_t \nabla \Phi(s,x)\,dv\,dx\,ds\\
&-\eps \iint_{\RR \times \RR}f_\eps(t,x,v)v^\perp \cdot \nabla \Phi(t,x)\,dx\,dv+\eps \iint_{\RR \times \RR}f_\eps^0(x,v)v^\perp \cdot \nabla \Phi(0,x)\,dx\,dv,
                          \end{split}
                          \end{equation*}
so that by Corollary \ref{coro:uni},
                          \begin{equation*}
                           |R_{\eps}^1|\leq C\eps\left(t \|\partial_t \nabla \Phi\|_{L^\infty}+\|\nabla \Phi\|_{L^\infty}\right).
                          \end{equation*}

                          \medskip

 Next, we insert the motion of the point charge. We introduce the combination 
$$h_\eps(t)=\xi_\eps(t)+\frac{\eps}{\gamma} \eta_\eps(t)^\perp,$$
so that by the mean-value theorem and by Corollary \ref{coro:uni} again, 
\begin{equation*}
  \begin{split}
\gamma   \Phi(t,\xi_\eps(t))- \gamma \Phi(0,\xi_\eps(0))&=\gamma \Phi(t,h_\eps(t))-\gamma \Phi(0,h_\eps(0))+R_{\eps}^2,\end{split}\end{equation*}
   with $$|R_{\eps}^2|\leq C\eps \|\nabla \Phi\|_{L^\infty}.$$ 
On the other hand, we observe that
$$\dot{h}_\eps(t)= E_\eps^\perp(t,\xi_\eps(t)),$$
therefore
   \begin{equation*}
   \begin{split}  
\gamma   \Phi(t,h_\eps(t))-\gamma \Phi(0,h_\eps(0))&=
\gamma   \int_0^t \partial_t \Phi(s,h_\eps(s))\,ds+ \gamma \int_0^t E_\eps^\perp(s,\xi_\eps(s))\cdot \nabla \Phi(s,h_\eps(s))\,ds\\
   &=\gamma \int_0^t \partial_t \Phi(s,\xi_\eps(s))\,ds+ \gamma \int_0^t E_\eps^\perp(s,\xi_\eps(s))\cdot \nabla \Phi(s,\xi_\eps(s))\,ds\\
   &+\gamma \int_0^t E_\eps^\perp(s,\xi_\eps(s))\cdot [\nabla \Phi(s,h_\eps(s))-\nabla \Phi(s,\xi_\eps(s))]\,ds\\
   &+R_{\eps}^3,
  \end{split}
 \end{equation*}
 where $$|R_{\eps}^3|\leq C \eps t( \|\partial_t \nabla \Phi\|_{L^\infty}+\|\partial_t  \Phi\|_{L^\infty}).$$
 
 \medskip
 
Moreover, we observe that 
%  \begin{equation*}\begin{split} \gamma E_\eps^\perp(s,\xi_\eps(s))\cdot \nabla \Phi(s,\xi_\eps(s))&
 %                 =-\int_{\RR} L_\eps^\perp(s,x)\cdot \nabla \Phi(s,\xi_\eps(s))\rho_\eps(s,x)\,dx,\end{split}\end{equation*}
%and therefore
\begin{equation*}\begin{split} 
 \int_{\RR} L_\eps^\perp(s,x)\cdot &\nabla \Phi(s,x)\rho_\eps(s,x)\,dx+\gamma  E_\eps^\perp(s,\xi_\eps(s))\cdot \nabla \Phi(s,\xi_\eps(s))\\
&= \int_{\RR} L_\eps^\perp(s,x)\cdot \big[\nabla \Phi(s,x)-\nabla \Phi(s,\xi_\eps(s))\big]
\rho_\eps(s,x)\,dx\\
&=2\mathcal{H}_{\Phi(s,\cdot)}[\rho_\eps(s), \gamma \delta_{\xi_\eps(s)}].\end{split}\end{equation*}

Noticing that 
\begin{equation*}
\begin{split}
\mathcal{H}_{\Phi}[\rho_\eps+\gamma \delta_{\xi_\eps},\rho_\eps+\gamma \delta_{\xi_\eps}]
&=\mathcal{H}_{\Phi}[\rho_\eps,\rho_\eps]+2 \mathcal{H}_{\Phi}[\rho_\eps,\gamma \delta_{\xi_\eps}]
\end{split}
\end{equation*}
and inserting this latter in \eqref{eq:w1} we obtain
                    \begin{equation}\label{eq:w2}\begin{split}
 &\int_{\RR}\Phi(t,x)\rho_\eps(t,x)\,dx-\int_{\RR}\Phi(0,x)\rho_\eps(0,x)\,dx+ \gamma \Phi(t,\xi_\eps(t))-\gamma \Phi(0,\xi_\eps(0))\\
& =\int_0^t \int_{\RR}\partial_t\Phi(s,x) \rho_\eps(s,x)\,ds\,dx+\gamma \int_0^t \partial_t \Phi(s,\xi_\eps(s))\,ds\\
 &+\int_0^t \mathcal{H}_{\Phi(s,\cdot)}[\rho_\eps(s)+\gamma \delta_{\xi_\eps(s)},\rho_\eps(s)+\gamma \delta_{\xi_\eps(s)}]\,ds\\
 &-\int_0^t \int_{\RR}\left(D\nabla^\perp\Phi(s,x):\int_{\RR}v\otimes v f_\eps(s,x,v)\,dv\right)\,dx\,ds\\
 &+\gamma\int_0^t E_\eps^\perp(s,\xi_\eps(s))\cdot [\nabla \Phi(s,h_\eps(s))-\nabla \Phi(s,\xi_\eps(s))]\,ds\\
 &+R_{\eps}^1+ R_{\eps}^2+ R_{\eps}^3.
 \end{split}\end{equation}
                    
                   \medskip

We next estimate the last (non remainder) term in \eqref{eq:w2}.
                We have
                    \begin{equation*}
                     \begin{split}
                \gamma     \int_0^t E_\eps^\perp(s,\xi_\eps(s))&\cdot [\nabla \Phi(s,h_\eps(s))-\nabla \Phi(s,\xi_\eps(s))]\,ds
                     \\&= \int_0^t E_\eps^\perp(s,\xi_\eps(s))\cdot [D\nabla \Phi(s,\xi_\eps(s))\: \eps \eta_\eps^\perp(s)]\,ds+R_{\eps}^4,
                     \end{split}\end{equation*}
with $$|R_{\eps}^4|\leq C\eps^2\sup_{t\in \R_+}|\eta_\eps(t)|^2\|D^3\Phi\|_{L^\infty}
\int_0^t |E_\eps(s,\xi_\eps(s))|\,ds\leq C\eps \|D^3\Phi\|_{L^\infty}(\eps^2+t),$$
where we used  Corollary \ref{coro:virial-3}.

\medskip

We now claim that
\begin{equation}\label{eq:claim}\begin{split}
\int_0^t  E_\eps^\perp(s,\xi_\eps(s))\cdot& [D\nabla \Phi(s,\xi_\eps(s))\: \eps \eta_\eps^\perp(s)]\,ds\\
&=-\int_0^t \eta_\eps(s)\cdot  [D\nabla^\perp \Phi(s,\xi_\eps(s))\:  \eta_\eps(s)]\,ds+R_{\eps}^5,\end{split}\end{equation}where
$$|R_{\eps}^5|\leq C\eps (1+t) \|\Phi\|_{W^{3,\infty}},$$
 which together with \eqref{eq:w2} and the estimates on the remainders will yield the conclusion of the proposition.

\medskip

\noindent \textbf{Proof of  \eqref{eq:claim}.}
Recalling  that $\eps E_\eps(\xi_\eps)^\perp=\eta_\eps+\frac{\eps^2}{\gamma}\dot{\eta}_\eps^\perp,$ we have
\begin{equation*}%\label{eq:charge}
 \begin{split}
& \int_0^t \eps E_\eps^\perp(s,\xi_\eps(s))\cdot [D\nabla \Phi(s,\xi_\eps(s))\:  \eta_\eps^\perp(s)]\,ds
 \\&=\int_0^t \eta_\eps(s)\cdot  [D\nabla \Phi(s,\xi_\eps(s))\:  \eta_\eps^\perp(s)]\,ds 
 +\frac{\eps^2}{\gamma} \int_0^t \dot{\eta}_\eps ^\perp(s) \cdot [D\nabla \Phi(s,\xi_\eps(s))\:  \eta_\eps^\perp(s)]\,ds\\
 &=I+J.\end{split}\end{equation*}

 On the one hand, for $F=\nabla \Phi(s,\cdot)$, a simple computation shows that
$$a\cdot (DF\: a^\perp)=-a\cdot (D(F^\perp)\: a),\quad \forall a\in \RR,$$
hence
$$I=-\int_0^t \eta_\eps(s)\cdot  [D\nabla^\perp \Phi(s,\xi_\eps(s))\:  \eta_\eps(s)]\,ds .$$

On the other hand, integrating by parts in $J$ we get 
 \begin{equation}\label{ineq:1}\begin{split}
J&=-\frac{\eps^2}{\gamma} \int_0^t {\eta}_\eps^\perp(s) \cdot \frac{d}{ds}[D\nabla \Phi(s,\xi_\eps(s))
 \:  \eta_\eps^\perp(s)]\,ds+R_{\eps}^6,\end{split}\end{equation}where \begin{equation*}
 R_{\eps}^6=\frac{\eps^2}{\gamma} {\eta}_\eps^\perp(t) \cdot [D\nabla \Phi(t,\xi_\eps(t))\:  \eta_\eps^\perp(t)]-
 \frac{\eps^2}{\gamma} {\eta}_\eps^\perp(0) \cdot [D\nabla \Phi(0,\xi_\eps(0))\:  \eta_\eps^\perp(0)]
\end{equation*}
so that
$$|R_\eps^6|\leq C\eps^2 \|D^2 \Phi\|_{L^\infty}.$$
Next, we compute 
\begin{equation*}
 \begin{split}
&  -\frac{\eps^2}{\gamma} \int_0^t {\eta}_\eps ^\perp(s) \cdot \frac{d}{ds}[D\nabla \Phi(s,\xi_\eps(s))
 \:  \eta_\eps^\perp(s)]\,ds\\
 &=-\frac{\eps^2}{\gamma} \int_0^t {\eta}_\eps ^\perp(s) \cdot [\frac{d}{ds}(D\nabla \Phi(s,\xi_\eps(s)))
 \:  \eta_\eps^\perp(s)]\,ds
 -\frac{\eps^2}{\gamma}\int_0^t {\eta}_\eps ^\perp(s) \cdot [D\nabla \Phi(s,\xi_\eps(s))
 \:  \dot{\eta}_\eps^\perp(s)]\,ds
 \end{split}\end{equation*}
 and using that $|\dot{\xi}_\eps|=|\eta_\eps|/\eps\leq C/\eps$ in the first term of the RHS we obtain
 \begin{equation*}
 \begin{split}
&  -\frac{\eps^2}{\gamma} \int_0^t {\eta}_\eps^\perp(s) \cdot \frac{d}{ds}[D\nabla \Phi(s,\xi_\eps(s))
 \:  \eta_\eps^\perp(s)]\,ds\\
 &=-\frac{\eps^2}{\gamma}\int_0^t {\eta}_\eps ^\perp(s) \cdot [D\nabla \Phi(s,\xi_\eps(s))
 \:  \dot{\eta}_\eps^\perp(s)]\,ds+R_{\eps}^7,
 \end{split}
\end{equation*}
                    with $$|R_{\eps}^7|\leq                     Ct\: \eps\: \|D^3 \Phi\|_{L^\infty}. $$

                    Coming back to \eqref{ineq:1}, we obtain therefore
                    \begin{equation*}\begin{split}
                    J=-\frac{\eps^2}{\gamma}
                     \int_0^t {\eta}_\eps ^\perp(s) \cdot [D\nabla \Phi(s,\xi_\eps(s))
 \:  \dot{\eta}_\eps^\perp(s)]\,ds+R_{\eps}^8
                     \end{split}
                    \end{equation*}
with $$|R_{\eps}^8|\leq C\eps(1+t) \| \Phi \|_{W^{3,\infty}}.$$
Now, we observe that for all $F\in C^1(\RR,\RR)$, we have
$$a\cdot( D F \: b)=b\cdot (DF\: a)+\text{curl}(F) a^\perp\cdot b,\quad \forall (a,b)\in \RR\times \RR,$$
where we have defined $\text{curl}(F)=\partial_2 F_1-\partial_1 F_2.$ Applying this to $F=\nabla \Phi(s,\cdot)$, so that $\text{curl}(F)=0$, to 
$a=\eta_\eps^\perp$ and $b=\dot{\eta}_\eps^\perp$, we get $$J=-J+R_{\eps}^8,$$ hence \eqref{eq:claim} follows.

\end{proof}

\subsection{Estimate on the trajectory of the charge}

\begin{corollary}\label{coro:holder}Let $T>0$. There exists $K_0>1$ and $\eps_0>0$, depending only on $T$, such that for  all $0<\eps<\eps_0$ and for all $0\leq s<t\leq T$,
\begin{equation}
|\xi_\eps(t)-\xi_\eps(s)|\leq K_0 \left((t-s)^{1/2}+\eps^{1/3}\right).
\end{equation}

\end{corollary}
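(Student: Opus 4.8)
The plan is to replace $\xi_\eps$ by its guiding center and reduce the whole estimate to a control of the drift integral. Set $h_\eps(t)=\xi_\eps(t)+\frac{\eps}{\gamma}\eta_\eps(t)^\perp$, as in the proof of Proposition~\ref{prop:weak-formulation-3}, so that $\dot h_\eps(t)=E_\eps^\perp(t,\xi_\eps(t))$ and
\[
\xi_\eps(t)-\xi_\eps(s)=\bigl(h_\eps(t)-h_\eps(s)\bigr)-\frac{\eps}{\gamma}\bigl(\eta_\eps(t)^\perp-\eta_\eps(s)^\perp\bigr).
\]
By Corollary~\ref{coro:uni} the last term is $O(\eps)$, hence harmless for the claimed $\eps^{1/3}$ error, and everything reduces to estimating $|h_\eps(t)-h_\eps(s)|\le\int_s^t|E_\eps(\tau,\xi_\eps(\tau))|\,d\tau$. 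First I would dispose of the two easy regimes. If $t-s\le\eps^{4/3}$, then Corollary~\ref{coro:virial-3} gives directly $\int_s^t|E_\eps(\xi_\eps)|\,d\tau\le C\eps+\frac{C}{\eps}(t-s)\le C\eps^{1/3}$, which already yields the bound. If $t-s\ge c_0$ for a fixed constant $c_0$ to be chosen below, the uniform bound $\sup_t|\xi_\eps(t)|\le C$ of Corollary~\ref{coro:uni} gives $|\xi_\eps(t)-\xi_\eps(s)|\le 2C\le K_0(t-s)^{1/2}$ as soon as $K_0\ge 2C/c_0^{1/2}$.

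The real work is the intermediate range $\eps^{4/3}\le t-s\le c_0$, where one must upgrade the ballistic bound $\frac{C}{\eps}(t-s)$ coming from Corollary~\ref{coro:virial-3} to a diffusive bound $C(t-s)^{1/2}$. This is where cancellation must be exploited: because the system is Hamiltonian there is no dissipation furnishing a free $L^2_t$ bound on the drift, so the gain has to come from the rapid rotation of $\eta_\eps$, of period $O(\eps^2)$, which produces cancellations in $\int_s^t E_\eps^\perp(\xi_\eps)\,d\tau$ that are destroyed by the crude inequality $\int|E_\eps^\perp|=\int|E_\eps|$ underlying Corollary~\ref{coro:virial-3}. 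The route I would pursue is to partition $[s,t]$ at the gyro-scale $\lambda\sim\eps^{2}$, on which Corollary~\ref{coro:virial-3} controls the displacement of $h_\eps$ on a single cell by $O(\eps)$, and then to show that the $\sim(t-s)/\eps^2$ successive cell contributions do not simply add but cancel, leaving a net displacement of order $(t-s)^{1/2}$. Concretely I would reopen the second-order virial identity of Proposition~\ref{prop:virial} and integrate it \emph{twice} in time against the oscillating factor carried by $\eta_\eps$, so that the singular term $\gamma/|X_\eps-\xi_\eps|$ is paired with a doubly averaged, and hence much smaller, remainder.

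The main obstacle is precisely this intermediate estimate: making the gyro-averaging cancellation quantitative and uniform in $\eps$, i.e. proving that $\int_s^t E_\eps^\perp(\tau,\xi_\eps(\tau))\,d\tau$ scales like $(t-s)^{1/2}$ rather than $(t-s)/\eps$ on the window $\eps^{4/3}\le t-s\le c_0$. I expect this to require combining the conservation laws of Proposition~\ref{prop:uni} (which pin down $|h_\eps|$ and $|\eta_\eps|$) with the non-concentration estimate of Proposition~\ref{prop:delort-2} (to keep the singular field $E_\eps(\xi_\eps)$ integrable in time across the close-encounter times between the charge and the bulk), while the smallness assumption \eqref{hyp:ini-small} is invoked once more to absorb the self-interaction $\|f_\eps^0\|_{L^1}\,\gamma|E_\eps(\xi_\eps)|$ exactly as in the proof of Corollary~\ref{coro:virial-3}. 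Once the intermediate bound is in hand, fixing $c_0$, then $K_0$ as above, and collecting the three regimes yields the stated inequality $|\xi_\eps(t)-\xi_\eps(s)|\le K_0\bigl((t-s)^{1/2}+\eps^{1/3}\bigr)$.
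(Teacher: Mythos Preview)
Your argument has a genuine gap in the intermediate regime $\eps^{4/3}\le t-s\le c_0$, and the mechanism you propose there looks unworkable. The guiding-center drift $\dot h_\eps=E_\eps^\perp(\tau,\xi_\eps(\tau))$ does not carry the fast phase $\eta_\eps$ at all, so there is no obvious oscillating factor against which to average: the cancellations you expect from the rotation of $\eta_\eps$ have already been fully consumed in passing from $\xi_\eps$ to $h_\eps$. What remains is a genuinely slow drift governed by the bulk field at the charge, and ``reopening the virial identity and integrating twice against the oscillating factor'' does not touch it. Your reference to Proposition~\ref{prop:delort-2} as a device ``to keep $E_\eps(\xi_\eps)$ integrable in time'' is also off target: non-concentration gives spatial smallness of mass on balls, not time-integrability of the field along a trajectory.

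The paper's proof avoids estimating $\int_s^t E_\eps(\xi_\eps)\,d\tau$ altogether in this regime. It applies the full weak formulation of Proposition~\ref{prop:weak-formulation-3} to a cutoff $\Phi(x)=\chi\bigl((x-\xi_\eps(s))/r\bigr)$ with $r=K_1((t-s)^{1/2}+\eps^{1/3})$, obtaining (schematically)
\[
\bigl|\Phi(\xi_\eps(t))-\Phi(\xi_\eps(s))\bigr|
\;\le\;
\Bigl|\int\Phi\,\rho_\eps(t)-\int\Phi\,\rho_\eps(s)\Bigr|
+C\|\Phi\|_{W^{2,\infty}}(t-s)+C(1+T)\eps\|\Phi\|_{W^{3,\infty}}.
\]
If $|\xi_\eps(t)-\xi_\eps(s)|>2r$, the left side equals $1$. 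On the right side the scaling of $r$ forces $\|\Phi\|_{W^{2,\infty}}(t-s)\lesssim K_1^{-2}$ and $\eps\|\Phi\|_{W^{3,\infty}}\lesssim K_1^{-3}$, while the first term is bounded by the mass of $\rho_\eps$ on a ball of radius $2r$, which is $\lesssim|\ln r|^{-1/2}$ by Proposition~\ref{prop:delort-2}. Choosing $K_1$ large makes the right side $<1$, a contradiction. Thus the $(t-s)^{1/2}$ comes not from temporal cancellation in the drift but from the coupling of $\xi_\eps$ to $\rho_\eps$ through the weak formulation, together with the non-concentration of $\rho_\eps$; this is the idea you are missing.
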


\begin{proof}By Proposition \ref{prop:weak-formulation-3} and by Remark \ref{rem:bound}, we have for all $\Phi\in C_c^\infty(\RR)$ and for all $0\leq s<t\leq T$
\begin{equation*}
\begin{split}
&\left|\int_{\RR}\Phi(x)\rho_\eps(t,x)\,dx+ \Phi(\xi_\eps(t))-
\int_{\RR}\Phi(x)\rho_\eps(s,x)\,dx-\Phi(\xi_\eps(s))\right|\\
&\leq C\|\Phi\|_{W^{2,\infty}}(t-s)\sup_{\tau \in [0,T]}
\left(\|\rho_\eps(\tau)\|_{L^1}^2+\|\rho_\eps(\tau)\|_{L^1}+\iint_{\RR}|v|^2f_\eps(\tau,x,v)\,dx\,dv+|\eta_\eps(\tau)|^2\right)\\
&+|R_\eps(t)|+|R_\eps(s)|,
\end{split}
\end{equation*}
and by Corollary \ref{coro:uni} it follows that
\begin{equation}\label{ineq:holder}
\begin{split}
&\left|\int_{\RR}\Phi(x)\rho_\eps(t,x)\,dx+ \Phi(\xi_\eps(t))-
\int_{\RR}\Phi(x)\rho_\eps(s,x)\,dx- \Phi(\xi_\eps(s))\right|\\
&\leq C\|\Phi\|_{W^{2,\infty}}(t-s)+C(1+T)\:\eps\: \|\Phi\|_{W^{3,\infty}}.
\end{split}
\end{equation}

Let $K_1>1$ be a sufficiently large number to be determined later, depending only on $T$. Let $\eps_0=K_1^{-6}$.  We first claim that
\begin{equation}\label{claim:holder}\begin{split}&\forall 0<\eps<\eps_0,\quad \forall 0\leq s<t\leq T\quad \text{with }t-s\leq K_1^{-4},\\
&|\xi_\eps(t)-\xi_\eps(s)|\leq 2K_1\left((t-s)^{1/2}+\eps^{1/3}\right).\end{split}\end{equation}
                                                                                   Otherwise, there exist $0<\eps<\eps_0$ and
$0\leq s<t\leq T$ with $t-s\leq K_1^{-4}$ but
$|\xi_\eps(t)-\xi_\eps(s)|>2K_1\left((t-s)^{1/2}+\eps^{1/3}\right)$. We set 
$$\Phi(x)=\chi\left(\frac{x-\xi_\eps(s)}{K_1\left((t-s)^{1/2}+\eps^{1/3}\right)}\right),$$where  $\chi$ is a cut-off function 
such that $\chi=1$ on $B(0,1)$ and $\chi$ vanishes on $B(0,2)^c$. In particular, we have $\Phi(\xi_\eps(t))=0$ and $\Phi(\xi_\eps(s))=1$. 
Moreover, since $K_1(t-s)^{1/2}<1$ and $K_1\eps^{1/3}\leq K_1^{-1}<1$, we have
$$\|\Phi\|_{W^{2,\infty}}\leq CK_1^{-2}(t-s)^{-1},\quad \|\Phi\|_{W^{3,\infty}}\leq CK_1^{-3}\eps^{-1}.$$ In view of \eqref{ineq:holder} and using Proposition \ref{prop:delort-2}, 
we get
\begin{equation*}
\begin{split}
1&\leq 2\sup_{\tau\in [0,T]}\int_{B(\xi_\eps(\tau),2K_1\left((t-s)^{1/2}+\eps^{1/3}\right)}\rho_\eps(\tau,x)\,dx+C(1+ T)\:K_1^{-3}
\\
&\leq 2\sup_{\tau\in [0,T]}\int_{B(\xi_\eps(\tau),2K_1^{-3})}\rho_\eps(\tau,x)\,dx+C(1+ T)\:K_1^{-3}
\\
&\leq C\left(|\ln (2K_1^{-3})|^{-1/2}+(1+T)\:K_1^{-3}\right)\leq \frac{1}{2}
\end{split}
\end{equation*}if we choose $K_1$ sufficiently large (note that this choice may be done explicit). This yields a 
contradiction, and \eqref{claim:holder} follows.

\medskip

Now, we split $[0,T]$ as $[0,T]=\cup_{i=0}^{N-1} [t_i,t_{i+1}]$, with $|t_{i+1}-t_i|=K_1^{-4}$, for $i=1,\ldots N-1$, and $|t_1-t_0|\leq K_1^{-4}$. 
Let $0<\eps<\eps_0$. Let $0\leq s<t\leq T$ such that $|t-s|>K_1^{-4}$ and $i<j$ such that $t\in [t_i,t_{i+1})$ and $s\in [t_j,t_{j+1})$. We have by \eqref{claim:holder} 
\begin{equation*}
\begin{split}
|\xi_\eps(t)-\xi_\eps(s)|&\leq |\xi_\eps(t)-\xi_\eps(t_i)|+|\xi_\eps(t_i)-\xi_\eps(t_j)|+|\xi_\eps(t_j)-\xi_\eps(s)|\\
&\leq 2K_1\left(|t-t_i|^{1/2}+2\eps^{1/3}+|s-t_j|^{1/2}\right)\\
&+2K_1(N+1)\left(K_1^{-2}+\eps^{1/3}\right)\\
&\leq 2K_1^{-1}(N+3)+2K_1(N+3)\eps^{1/3}\\
&\leq 2K_1(N+3) \left(|t-s|^{1/2}+\eps^{1/3}\right).
\end{split}
\end{equation*}
Taking $K_0=2K_1(N+3)$, we are led to the result.
\end{proof}

\subsection{Time equicontinuity for the densities}

In this paragraph we prove the following
\begin{lemma}
 \label{lemma:equicontinuity}Let $T>0$. There exists $K_1>0$ and $\eps_0>0$, depending only on $T$, such that for all $0<\eps<\eps_0$ and for all $0\leq s<t\leq T$, 
 \begin{equation*}
  \|\rho_\eps(t)-\rho_\eps(s)\|_{W^{-3,1}(\RR)}\leq K_1\left((t-s)^{1/2}+\eps^{1/3}\right).
 \end{equation*}

\end{lemma}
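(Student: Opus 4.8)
The plan is to use the duality $\|\cdot\|_{W^{-3,1}(\RR)}=\sup\{\langle\cdot,\Phi\rangle:\Phi\in C_c^\infty(\RR),\ \|\Phi\|_{W^{3,\infty}}\leq 1\}$ and to test the weak formulation of Proposition \ref{prop:weak-formulation-3} against a function $\Phi$ independent of time. Subtracting the identities of that proposition at times $t$ and $s$ (so that the contributions at time $0$ cancel) and using $\partial_t\Phi=0$, all the $\partial_t\Phi$ terms drop and we are left with
\begin{equation*}
\begin{split}
\int_\RR\Phi\,\rho_\eps(t)\,dx-\int_\RR\Phi\,\rho_\eps(s)\,dx
&=-\gamma\big(\Phi(\xi_\eps(t))-\Phi(\xi_\eps(s))\big)+\int_s^t\mathcal{H}_\Phi[\rho_\eps+\gamma\delta_{\xi_\eps},\rho_\eps+\gamma\delta_{\xi_\eps}]\,d\tau\\
&-\int_s^t\!\!\int_\RR D\nabla^\perp\Phi:\Big(\int_\RR v\otimes v\,f_\eps\,dv\Big)\,dx\,d\tau-\int_s^t\eta_\eps\cdot(D\nabla^\perp\Phi(\xi_\eps)\,\eta_\eps)\,d\tau+\widetilde R_\eps,
\end{split}
\end{equation*}
with $|\widetilde R_\eps|\leq C(1+T)\,\eps\,\|\Phi\|_{W^{3,\infty}}$ coming from $|R_\eps(t)|+|R_\eps(s)|$.

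It then remains to bound each term on the right by $C(T)\|\Phi\|_{W^{3,\infty}}\big((t-s)^{1/2}+\eps^{1/3}\big)$. The charge term is the crucial one: by Corollary \ref{coro:holder}, $|\Phi(\xi_\eps(t))-\Phi(\xi_\eps(s))|\leq\|\nabla\Phi\|_{L^\infty}|\xi_\eps(t)-\xi_\eps(s)|\leq K_0\|\Phi\|_{W^{1,\infty}}\big((t-s)^{1/2}+\eps^{1/3}\big)$, and this is exactly where the square-root-in-time and $\eps^{1/3}$ rates are produced. For the nonlinear term I invoke Remark \ref{rem:bound}, namely $|H_\Phi(x,y)|\leq\|D^2\Phi\|_{L^\infty}$ by the mean-value theorem, together with the uniform mass bound $\|\rho_\eps\|_{L^1}+\gamma=\|f_\eps^0\|_{L^1}+\gamma\leq C$; hence $|\mathcal{H}_\Phi[\rho_\eps+\gamma\delta_{\xi_\eps},\rho_\eps+\gamma\delta_{\xi_\eps}]|\leq C\|\Phi\|_{W^{2,\infty}}$ and its time integral is $\leq C\|\Phi\|_{W^{2,\infty}}(t-s)$. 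The stress term and the charge-velocity term are linear in $(t-s)$ and are controlled by Corollary \ref{coro:uni}, using $\int\!\int|v|^2f_\eps\,dv\,dx\leq C$ and $|\eta_\eps|\leq C$, each giving a bound $\leq C\|\Phi\|_{W^{2,\infty}}(t-s)$.

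Collecting these estimates yields
$$\Big|\int_\RR\Phi(\rho_\eps(t)-\rho_\eps(s))\,dx\Big|\leq C\|\Phi\|_{W^{3,\infty}}\Big((t-s)+\eps+(t-s)^{1/2}+\eps^{1/3}\Big).$$
Since $0\leq t-s\leq T$ and $0<\eps<\eps_0<1$, one has $(t-s)\leq T^{1/2}(t-s)^{1/2}$ and $\eps\leq\eps^{1/3}$, so the right-hand side is $\leq C(T)\|\Phi\|_{W^{3,\infty}}\big((t-s)^{1/2}+\eps^{1/3}\big)$. Taking the supremum over $\|\Phi\|_{W^{3,\infty}}\leq 1$ gives the claim with $K_1$ depending only on $T$. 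Note that the exponent $-3$ is forced: $\widetilde R_\eps$ is the only contribution requiring control of third derivatives of $\Phi$, all other terms needing merely $W^{2,\infty}$. This lemma is essentially bookkeeping layered on the weak formulation, so I do not anticipate a genuine obstacle; the only two substantive inputs are the trajectory H\"older bound (Corollary \ref{coro:holder}), which alone determines the two rates, and the boundedness of $H_\Phi$ (Remark \ref{rem:bound}). The main point requiring care is simply to track, term by term, which Sobolev norm of $\Phi$ is actually used.
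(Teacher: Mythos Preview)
Your proof is correct and follows essentially the same route as the paper: test the weak formulation of Proposition~\ref{prop:weak-formulation-3} with a time-independent $\Phi$, bound the nonlinear, stress, and charge-velocity terms linearly in $(t-s)$ via Remark~\ref{rem:bound} and Corollary~\ref{coro:uni}, control the remainder by $C(1+T)\eps\|\Phi\|_{W^{3,\infty}}$, and use Corollary~\ref{coro:holder} on $|\Phi(\xi_\eps(t))-\Phi(\xi_\eps(s))|$ to produce the rates $(t-s)^{1/2}+\eps^{1/3}$. The paper simply compresses the first three bounds into the single already-established inequality~\eqref{ineq:holder}, whereas you spell them out term by term; the content is the same.
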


\begin{proof} As for Corollary \ref{coro:holder}, the proof relies on Proposition \ref{prop:weak-formulation-3} and Remark \ref{rem:bound}. By \eqref{ineq:holder} we have 
for all $\Phi\in C_c^\infty(\RR)$ and for all $0\leq s<t\leq T$
\begin{equation*}
\begin{split}
&\left|\int_{\RR}\Phi(x)\rho_\eps(t,x)\,dx
-\int_{\RR}\Phi(x)\rho_\eps(s,x)\,dx\right|\\
&\leq \left| \Phi(\xi_\eps(t))- \Phi(\xi_\eps(s))\right|+ C\|\Phi\|_{W^{2,\infty}}(t-s)+C(1+T)\:\eps\:\|\Phi\|_{W^{3,\infty}}.
\end{split}
\end{equation*} In view of Corollary \ref{coro:holder}, this yields by the mean-value theorem
\begin{equation*}\begin{split}
 \Big|\int_{\RR}\Phi(x)\rho_\eps(t,x)\,dx&
-\int_{\RR}\Phi(x)\rho_\eps(s,x)\,dx\Big|\leq K_0\|\nabla \Phi\|_{L^{\infty}}
\left( (t-s)^{1/2}+\eps^{1/3}\right)\\
&+C\|\Phi\|_{W^{2,\infty}}(t-s)
+C(1+T)\:\eps\:\|\Phi\|_{W^{3,\infty}},\end{split}
\end{equation*}
from which the conclusion follows.

\end{proof}

\begin{remark}
 In \cite{golse-sr} (see also \cite{miot-16}), it is proved instead that the sequence of densities is uniformly bounded in $C^{1/2}(\R_+,W^{-2,1}(\RR))$. Here 
 we loose one derivative, due to 
 the contribution of the point charge appearing in the estimate for the remainder in the proof of Proposition \ref{prop:weak-formulation-3}.
\end{remark}

\subsection{Compactness}\label{subsec:compactness}
In this paragraph we use the previous estimates to show that
\begin{proposition}
\label{prop:compactness}
 There exists a subsequence such that $(\rho_{\eps_n})$ converges to some $\rho$ in $C_w(\R_+,\mathcal{M}_+(\RR))$ as $n\to +\infty$, and $\rho$ belongs to $L^\infty(\R_+,H^{-1}(\RR))$. The sequence $(\xi_{\eps_n})$ 
converges to some $\xi$ in $C^{1/2}([0,T],\RR)$ for all $T>0$.
\end{proposition}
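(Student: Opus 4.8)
The plan is to extract the two limits separately by Arzel\`a--Ascoli type arguments, handling the charge trajectories in the classical sup-norm topology and the densities in the weak topology of measures, and then to identify the regularity of the limits.

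For the charge I would start from the uniform bound $\sup_{t\in\R_+}\sup_{0<\eps<1}|\xi_\eps(t)|<+\infty$ provided by Corollary \ref{coro:uni}, together with the modulus of continuity of Corollary \ref{coro:holder}, namely $|\xi_\eps(t)-\xi_\eps(s)|\leq K_0((t-s)^{1/2}+\eps^{1/3})$ for $0<\eps<\eps_0$ and $0\leq s<t\leq T$. The family $(\xi_\eps)$ is therefore bounded and, as $\eps\to 0$, asymptotically equicontinuous on $[0,T]$; Arzel\`a--Ascoli followed by a diagonal extraction over $T\in\N$ yields a subsequence $(\xi_{\eps_n})$ converging uniformly on every compact time interval to a limit $\xi$. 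Passing to the limit in the H\"older estimate, where the term $\eps_n^{1/3}$ disappears, gives $|\xi(t)-\xi(s)|\leq K_0(t-s)^{1/2}$, so that $\xi\in C^{1/2}([0,T],\RR)$ for all $T>0$.

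For the densities I would combine compactness at fixed time with time equicontinuity. On the one hand, Proposition \ref{prop:uni} gives $\|\rho_\eps(t)\|_{L^1}=\|\rho_\eps^0\|_{L^1}\leq C$ and Corollary \ref{coro:uni} a uniform bound on $\int_{\RR}|x|^2\rho_\eps(t,x)\,dx$; together these make $(\rho_\eps(t))$ bounded and tight in $\mathcal{M}_+(\RR)$ for each fixed $t$, hence relatively compact in the weak-$\ast$ topology. On the other hand, Lemma \ref{lemma:equicontinuity} controls $\|\rho_\eps(t)-\rho_\eps(s)\|_{W^{-3,1}(\RR)}$, so that for every fixed $\Phi\in C_c^\infty(\RR)$ the scalar functions $t\mapsto\int_{\RR}\Phi\,d\rho_\eps(t)$ are uniformly bounded and asymptotically equicontinuous on $[0,T]$ (using the duality between $W^{-3,1}$ and $W^{3,\infty}$). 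Fixing a countable family $(\Phi_k)$ dense in $C_0(\RR)$, I would apply Arzel\`a--Ascoli to each $t\mapsto\int\Phi_k\,d\rho_{\eps_n}(t)$ and extract, by a diagonal procedure over $k$ and over $T\in\N$ and along the subsequence already chosen for $\xi$, a single subsequence (still denoted $\eps_n$) along which all these scalar functions converge uniformly on compact time intervals.

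It remains to upgrade this to convergence in $C_w(\R_+,\mathcal{M}_+(\RR))$ and to identify the limit's regularity, and this last passage is where the main care is required. For a general $\Phi\in C_0(\RR)$ and $\delta>0$, approximating $\Phi$ by some $\Phi_k$ with $\|\Phi-\Phi_k\|_{\infty}<\delta$ and using the uniform mass bound $\sup_n\|\rho_{\eps_n}(t)\|_{L^1}\leq C$ shows that $t\mapsto\int\Phi\,d\rho_{\eps_n}(t)$ is Cauchy uniformly on $[0,T]$; the second-moment bound guarantees tightness and hence no loss of mass at infinity, so the limiting functional is represented by a genuine positive measure $\rho(t)\in\mathcal{M}_+(\RR)$, and one obtains $\sup_{t\in[0,T]}|\int_{\RR}\Phi\,(d\rho_{\eps_n}(t)-d\rho(t))|\to 0$, i.e. convergence in $C_w(\R_+,\mathcal{M}_+(\RR))$. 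Finally, since the remark following \eqref{ineq:e-barre} shows that $(\rho_\eps)$ is uniformly bounded in $L^\infty(\R_+,H^{-1}(\RR))$, a weak-$\ast$ limit in that space exists and is identified with $\rho$ by testing against smooth functions, giving $\rho\in L^\infty(\R_+,H^{-1}(\RR))$. The principal obstacle is precisely converting pointwise-in-time weak-$\ast$ convergence against a countable dense family into uniform-on-compacts convergence in $C_w$; the tightness from the second moment and the equicontinuity from Lemma \ref{lemma:equicontinuity} are exactly the two ingredients that make this approximation argument go through.
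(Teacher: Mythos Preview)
Your proof is correct and follows essentially the same route as the paper: both use Corollary \ref{coro:uni} and Corollary \ref{coro:holder} together with an Arzel\`a--Ascoli argument (the paper packages the ``asymptotic equicontinuity'' version as Lemma \ref{lemma:ascoli-bis}) for the charge, and both use the uniform $L^1$/moment bounds plus Lemma \ref{lemma:equicontinuity} for the densities, concluding with the $H^{-1}$ bound from \eqref{ineq:e-barre}. The only difference is presentational: where the paper first obtains compactness in $C([0,T],W^{-3,1})$ and then invokes \cite[Lemma 3.2]{Schochet} as a black box to upgrade to $C_w(\R_+,\mathcal{M}_+(\RR))$, you spell out that upgrade by hand via a countable dense family in $C_0(\RR)$ and the tightness coming from the second moment.
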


To show Proposition \ref{prop:compactness} we shall use a straightforward adaptation of Ascoli's theorem:
\begin{lemma}\label{lemma:ascoli-bis}Let $T>0$. Let $(F,d)$ be a complete metric space.
 Let $(g_\eps)$ be a family of $C([0,T], F)$ such that 
 \begin{enumerate}
  \item For all $t\in [0,T]$, the family $(g_\eps(t))$ is relatively compact in $F$;
  \item There exists $C>0$ and a sequence $r_\eps \to 0$ as $\eps \to +0$ such that for all $t,s\in [0,T]$, for all $\eps>0$, 
  $d(g_\eps(t),g_\eps(s))\leq C|t-s|^{1/2}+r_\eps$.
 \end{enumerate}Then the family $(g_\eps)$ is relatively compact in $C([0,T],F)$.
\end{lemma}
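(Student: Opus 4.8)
The plan is to prove a weakened form of the Arzel\`a--Ascoli theorem in which the classical equicontinuity hypothesis is replaced by the asymptotic equicontinuity supplied by (2): the additive term $r_\eps$ is harmless precisely because it vanishes in the regime $\eps\to 0$ relevant to the application. Concretely, I will show that any sequence $(\eps_n)$ with $\eps_n\to 0$ admits a subsequence along which $(g_{\eps_n})$ converges in $C([0,T],F)$ for the uniform metric $d_\infty(f,g)=\sup_{t\in[0,T]}d(f(t),g(t))$. Since $F$ is complete, $(C([0,T],F),d_\infty)$ is complete, so it suffices to extract a $d_\infty$-Cauchy subsequence.

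First I would fix a countable dense subset $D=\{t_k\}_{k\geq 1}$ of $[0,T]$. By hypothesis (1), for each fixed $k$ the set $\{g_{\eps_n}(t_k)\}_n$ is relatively compact in $F$, so a standard diagonal extraction produces a subsequence, still denoted $(g_{\eps_n})$ after relabelling, such that $g_{\eps_n}(t_k)$ converges in $F$ as $n\to\infty$ for every $k$.

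Next I would upgrade this pointwise convergence on $D$ to uniform Cauchyness on $[0,T]$ using (2) together with the compactness of $[0,T]$. Given $\delta>0$, choose $\eta>0$ with $C\eta^{1/2}<\delta/8$, and select finitely many points $s_1,\dots,s_N\in D$ such that every $t\in[0,T]$ lies within distance $\eta$ of some $s_i$ (possible by density and compactness). Since $\eps_n\to 0$ we have $r_{\eps_n}\to 0$, so there is $N_1$ with $r_{\eps_n}<\delta/8$ for $n\geq N_1$; and since $g_{\eps_n}(s_i)$ converges for each of the finitely many indices $i$, there is $N_0$ with $d(g_{\eps_n}(s_i),g_{\eps_m}(s_i))<\delta/4$ for all $n,m\geq N_0$ and all $i$. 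For an arbitrary $t\in[0,T]$, picking $s_i$ with $|t-s_i|<\eta$, the triangle inequality combined with (2) gives, for $n,m\geq\max(N_0,N_1)$,
\[
d(g_{\eps_n}(t),g_{\eps_m}(t))\leq \big(C\eta^{1/2}+r_{\eps_n}\big)+d(g_{\eps_n}(s_i),g_{\eps_m}(s_i))+\big(C\eta^{1/2}+r_{\eps_m}\big)<\delta.
\]
As the bound is independent of $t$, this yields $\sup_{t\in[0,T]}d(g_{\eps_n}(t),g_{\eps_m}(t))<\delta$ for all large $n,m$, i.e. $(g_{\eps_n})$ is $d_\infty$-Cauchy. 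By completeness of $C([0,T],F)$ it converges uniformly to some limit $g$, which is continuous as a uniform limit of continuous maps, establishing the asserted relative compactness.

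I expect the only genuinely delicate point to be the role of the assumption $\eps_n\to 0$: unlike in the classical statement, equicontinuity holds only modulo the term $r_{\eps_n}$, so uniform Cauchyness can be obtained solely along sequences for which $r_{\eps_n}\to 0$. This is exactly why the diagonal extraction must be coupled with the tail smallness of $r_{\eps_n}$ rather than with a single equicontinuity modulus, and why the conclusion is meaningful for the family indexed by $\eps$ as $\eps\to 0$.
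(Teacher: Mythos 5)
Your proof is correct and is exactly the ``straightforward adaptation of Ascoli's theorem'' that the paper invokes without supplying any argument: a diagonal extraction over a countable dense subset of $[0,T]$ using hypothesis (1), followed by an upgrade to uniform Cauchyness in which the classical equicontinuity modulus is replaced by $C|t-s|^{1/2}+r_{\eps_n}$, with the additive term absorbed because $r_{\eps_n}\to 0$, and completeness of $C([0,T],F)$ closing the argument. Your final remark is also the right one to make: since $r_\eps$ is only controlled as $\eps\to 0$, the conclusion must be understood as relative compactness along sequences $\eps_n\to 0$ (for $\eps$ bounded away from zero hypothesis (2) gives no equicontinuity and the literal statement can fail), and this is precisely the way the lemma is used in the proof of Proposition \ref{prop:compactness}.
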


Recalling that  $(\rho_\eps)$ is uniformly bounded in $L^\infty(\R_+, \mathcal{M}_+(\RR))$ and in view of Lemma \ref{lemma:equicontinuity}, 
we can apply this Lemma to $F=W^{-3,1}$ for any $T>0$.  Arguing as in the proof of Lemma 3.2 in \cite[Lemma 3.2]{Schochet} and using a diagonal argument, we can then show the existence of
$\eps_n\to 0$ as $n\to +\infty$ such that $(\rho_{\eps_n})$ converges to some $\rho$ in $C_w(\R_+,\mathcal{M}_+(\RR))$.  By \eqref{ineq:e-barre}, the family $(\rho_\eps)$ is bounded 
in $L^\infty(\R_+,H^{-1}(\RR))$ so we infer that $\rho$ belongs to $L^\infty(\R_+,H^{-1}(\RR))$.

\medskip

On the other hand, the sequence $(\xi_{\eps_n})$ is uniformly bounded in $L^\infty(\R_+,\RR)$ in view of
Corollary \ref{coro:uni}. Recalling that
Corollary \ref{coro:holder} holds, applying
Lemma \ref{lemma:ascoli-bis} and a diagonal argument, we obtain a subsequence, still denoted in the same way, such that $(\xi_{\eps_n})$ 
converges to some $\xi$ in $C^{1/2}([0,T],\RR)$ for all $T>0$.
Therefore the proposition is proved.

\subsection{Existence of a defect measure}

The following lemma in an extension of Lemma 3.3 in \cite{golse-sr}.

\begin{lemma}[\cite{golse-sr}, Lemma 3.3]\label{lemma:radial}
 Under the assumptions of Theorem \ref{thm:main}, the sequence $(f_{\eps_n})$ is relatively compact in $L^\infty(\R_+,\mathcal{M}(\RR\times \RR))$ weak - $*$. 
 Moreover, any accumulation point $f$ satisfies 
$$\nabla_v\cdot(v^\perp f)=0,\quad \text{in }\mathcal{D}'(\R_+^\ast \times \RR \times \RR).$$
\end{lemma}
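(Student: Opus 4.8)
The plan is to separate the statement into the (routine) compactness assertion and the (delicate) constraint $\nabla_v\cdot(v^\perp f)=0$. For the first part I would note that, by Proposition \ref{prop:uni} together with \eqref{hyp:ini-1-bis}, the family $(f_\eps)$ is uniformly bounded in $L^\infty(\R_+,L^1(\RR\times\RR))$, hence bounded in $L^\infty(\R_+,\mathcal{M}(\RR\times\RR))$, which is the dual of the separable space $L^1(\R_+,C_0(\RR\times\RR))$. Banach--Alaoglu then gives a subsequence $(f_{\eps_n})$ converging weakly-$\ast$ to some $f$, with $f\ge 0$ since each $f_{\eps_n}\ge 0$.

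For the constraint I would multiply the Vlasov equation by $\eps^2$ and read off, in $\mathcal{D}'(\R_+^\ast\times\RR\times\RR)$,
\[
\nabla_v\cdot(v^\perp f_\eps)=-\eps^2\partial_t f_\eps-\eps\,v\cdot\nabla_x f_\eps-\eps\,E_\eps\cdot\nabla_v f_\eps-\eps\,L_\eps\cdot\nabla_v f_\eps .
\]
Testing against $\psi\in C_c^\infty(\R_+^\ast\times\RR\times\RR)$, the left-hand side converges to $\langle\nabla_v\cdot(v^\perp f),\psi\rangle$ by weak-$\ast$ convergence (note that $v^\perp\cdot\nabla_v\psi\in C_c$), so it suffices to show each term on the right tends to $0$. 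After integration by parts the first two are bounded by $C\eps^2\|\partial_t\psi\|_\infty$ and $C\eps\|\nabla_x\psi\|_\infty$ times $\sup_t\|f_\eps(t)\|_{L^1}$ on the compact support of $\psi$, using Corollary \ref{coro:uni}. The electric-field term is handled as in \cite{golse-sr}: estimating $\int|E_\eps|\rho_\eps\le\|E_\eps-\overline E_\eps\|_{L^2}\|\rho_\eps\|_{L^2}+\|\overline E_\eps\|_\infty\|\rho_\eps\|_{L^1}$ one gets
\[
\eps\int_0^T\!\!\int_{\RR}|E_\eps|\,\rho_\eps\,dx\,dt\le C\,T\,(\eps^2\|f_\eps^0\|_\infty)^{1/2}+C\,T\,\eps,
\]
which vanishes by \eqref{ineq:e-barre}, Corollary \ref{coro:uni} and \eqref{hyp:ini-2-bis}.

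The genuinely new term, and the main obstacle, is the charge contribution $\eps\iint L_\eps f_\eps\cdot\nabla_v\psi$. A crude bound fails here, since the virial estimate (Corollary \ref{coro:virial-3}) only yields $\int_0^T\!\int_{\RR}\rho_\eps/|x-\xi_\eps|\,dx\,dt=O(\eps^{-1})$, so $\eps\iint|L_\eps|\rho_\eps$ is merely $O(1)$. I would first remove the singular part by the mean-value theorem: writing $\nabla_v\psi(t,x,v)=\nabla_v\psi(t,\xi_\eps(t),v)+[\nabla_v\psi(t,x,v)-\nabla_v\psi(t,\xi_\eps(t),v)]$, the difference contributes $O(\eps)$ because $|L_\eps(t,x)|\,|x-\xi_\eps(t)|=\gamma$ and $\|f_\eps\|_{L^1}$ is uniformly bounded (recall \eqref{def:L}). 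This leaves the diagonal part $\eps\int_0^T\!\int_{\RR}\nabla_v\psi(t,\xi_\eps(t),v)\cdot\big(\int_{\RR}L_\eps(t,x)f_\eps(t,x,v)\,dx\big)\,dv\,dt$, which still carries the full singularity of $L_\eps$ and cannot be controlled by its absolute value.

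To treat the diagonal part I expect to argue by cancellation rather than by size. The cleanest route is to identify the weak-$\ast$ limit of the vector measure $\eps L_\eps f_\eps$: the mean-value bound above shows this limit is carried by the charge trajectory $x=\xi(t)$ (using $\xi_\eps\to\xi$ uniformly from Proposition \ref{prop:compactness}), of the form $\delta_{x=\xi(t)}\otimes\lambda(t,v)$, so that the diagonal part converges to $\int\lambda(t,v)\cdot\nabla_v\psi(t,\xi(t),v)$. It then remains to prove $\nabla_v\cdot\lambda=0$, which is the heart of the matter. I expect this to follow from the fast Larmor gyration: inserting the charge's equation of motion $\eps E_\eps(\xi_\eps)^\perp=\eta_\eps+\tfrac{\eps^2}{\gamma}\dot{\eta}_\eps^\perp$ (as already exploited in Proposition \ref{prop:weak-formulation-3}) trades the singular self-field for the charge momentum $\eta_\eps$, whose rapid oscillation averages to zero against the smooth-in-time test function, forcing $\lambda$ to be rotation-invariant in $v$. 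This oscillation/averaging step is the principal difficulty; the remainder is a routine adaptation of \cite{golse-sr}.
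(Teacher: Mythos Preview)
Your treatment of compactness and of the first three right-hand terms ($\eps^2\partial_t f_\eps$, $\eps v\cdot\nabla_x f_\eps$, $\eps E_\eps\cdot\nabla_v f_\eps$) matches the paper. The divergence is in the charge term, where you assert that a direct absolute-value estimate fails and then embark on a cancellation/oscillation argument whose key step (identifying the limit of $\eps L_\eps f_\eps$ and proving $\nabla_v\cdot\lambda=0$ via the charge's equation of motion) is only sketched.

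That assertion is mistaken: a direct bound does work, and this is exactly what the paper does. The ingredient you overlooked is the $L^\infty$ bound $\|f_\eps(t)\|_{L^\infty}=\|f_\eps^0\|_{L^\infty}$ from Proposition~\ref{prop:uni}, together with the compact $v$-support of the test function. One splits the $x$-integral at radius $r=\|f_\eps^0\|_{L^\infty}^{-1/2}$ around $\xi_\eps(t)$. On $B(\xi_\eps(t),r)$ one replaces $f_\eps$ by $\|f_\eps^0\|_{L^\infty}$, integrates $v$ over the (bounded) support of $\psi$, and uses $\int_{B(0,r)}|x|^{-1}dx=2\pi r$; this yields the bound $C\eps\,\|f_\eps^0\|_{L^\infty}\,r=C\eps\,\|f_\eps^0\|_{L^\infty}^{1/2}$. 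On the complement $|L_\eps|\le\gamma/r=\gamma\,\|f_\eps^0\|_{L^\infty}^{1/2}$, and pairing with $\|f_\eps\|_{L^1}$ gives the same bound. Since $\eps\,\|f_\eps^0\|_{L^\infty}^{1/2}=(\eps^2\|f_\eps^0\|_{L^\infty})^{1/2}\to 0$ by assumption \eqref{hyp:ini-2-bis}, the charge term vanishes in the limit. No virial estimate and no cancellation are needed here; your elaborate route is unnecessary, and as written it is not a proof.
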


\begin{proof} We follow the arguments of \cite{golse-sr}. 
We have
\begin{equation*}\begin{split}
                  v^\perp\cdot \nabla_v f_\eps=-\partial_t(\eps^2f_\eps)-\nabla_x\cdot(\eps v f_\eps)-\nabla_v\cdot (\eps (L_\eps +E_\eps) f_\eps).
                 \end{split}
\end{equation*}
By Corollary \ref{coro:uni}, the first two terms of the RHS converge to zero in the sense of distributions on $\R_+^\ast \times \RR \times \RR$. We next focus on the last  
term. Let 
$\Phi$ be a test function with support included in $[0,R]\times B_{\RR}(0,R)^2$ for some $R>0$. We have by \eqref{ineq:e-barre}
\begin{equation*}
 \begin{split}
&\eps  \left|\int_0^{+\infty}
  \iint_{\RR\times \RR} E_\eps(t,x)\cdot \nabla_v \Phi(t,x,v)f_\eps(t,x,v)\,dx\,dv\,dt\right|\\&
  \leq \eps \|\nabla_v\Phi\|_{L^\infty} R\left(\|\rho_\eps\|_{L^\infty(L^2)}\|E_\eps-\overline{E}_\eps\|_{L^\infty(L^2)}+\|\rho_\eps\|_{L^\infty(L^1)}\|\overline{E}_\eps\|_{L^\infty}\right)\\
  &\leq C\|\nabla_v\Phi\|_{L^\infty}\left( \eps \|f_\eps^0\|_{L^\infty}^{1/2}+\eps\right).
 \end{split}
\end{equation*}

On the other hand,
\begin{equation*}
 \begin{split}
&\eps  \left|\int_0^{+\infty}
  \iint_{\RR\times \RR} L_\eps(t,x)\cdot \nabla_v \Phi(t,x,v)f_\eps(t,x,v)\,dx\,dv\,dt\right|\\&
  \leq \eps \|\nabla_v\Phi\|_{L^\infty} \pi R^2\,\gamma\, \|f_\eps^0\|_{L^\infty}\int_0^R 
  \left( \int_{B(\xi_\eps(t),\|f_\eps^0\|_{L^\infty}^{-1/2})}\frac{dx}{|x-\xi_\eps(t)|}\right)\,dt\\
  &+\eps \|\nabla_v \Phi\|_{L^\infty}\,\gamma \|f_\eps^0\|_{L^\infty}^{1/2}\int_0^R
  \int_{\RR\setminus B(\xi_\eps(t),\|f_\eps^0\|_{L^\infty}^{-1/2})}\int_{\RR}f_\eps(t,x,v)\,dx\,dv\,dt\\
  &\leq C\|\nabla_v\Phi\|_{L^\infty}\left(\eps \|f_\eps^0\|_{L^\infty}^{1/2}+\eps \|f_\eps^0\|_{L^\infty}^{1/2}\right)\\
  &\leq C \eps \|f_\eps^0\|_{L^\infty}^{1/2}.
 \end{split}
\end{equation*}
Since $\eps \|f_\eps^0\|_{L^\infty}^{1/2}$ tends to zero as $\eps \to 0$ by assumption \eqref{hyp:ini-2-bis}, we infer that
$v^\perp \cdot \nabla_v f_\eps=\nabla_v\cdot(v^\perp f_\eps) \to 0$ in the sense of distributions.

Now, the sequence  $(f_{\eps_n})$ is uniformly bounded in $ L^\infty(\R_+,L^1(\RR\times \RR))$, thus it is relatively compact in 
$ L^\infty(\R_+,\mathcal{M}(\RR\times \RR))$ weak - $*$. Let $f$ be an accumulation point. In view of the previous estimates we obtain in the limit: 
$\nabla_v\cdot(v^\perp f)=0$ in the sense of distributions.

\end{proof}

\begin{proposition}
 \label{prop:radial-terms}Under the assumptions of Theorem \ref{thm:main}, there exists a subsequence $(f_{\eps_{n_k}})$  and there exists $f=f(t,x,|v|)\in 
 L^\infty(\R_+,\mathcal{M}(\RR\times \RR))$ such that $(f_{\eps_n})$ converges to $f$ 
in $L^\infty(\R_+, \mathcal{M}_+(\RR \times \RR))$ weak - 
$\ast$ and such that  $\rho=\int f\,dv$. Moreover, there exists a measure 
 $\nu_0\in L^\infty(\R_+,\mathcal{M}(\RR\times \mathbb{S}^1))$ 
 such that  for all $\Phi$ continuous on $\mathbb{S}^1$, 
$$\int_{\RR}\left(f_{\eps_{n_k}}(t,x,v)-f(t,x,|v|)\right)\Phi\left(\frac{v}{|v|}\right)|v|^2\,dv$$ converges to$$\int_{\mathbb{S}^1}\Phi(\theta)\: d\nu_0(\theta)$$ in the 
sense of distributions on $\R_+\times \RR$. In particular, we have:
 \begin{equation*}
\int_{\RR}v_1v_2f_{\eps_{n_k}}\,dv\to \int_{\mathbb{S}^1}\theta_1\theta_2d\nu_0(\theta)\quad\text{as } k\to +\infty\end{equation*}
 and \begin{equation*}
 \int_{\RR}(v_2^2-v_1^2)f_{\eps_{n_k}}\,dv  \to \int_{\mathbb{S}^1}(\theta_2^2-\theta_1^2)d\nu_0(\theta)\quad\text{as } k\to +\infty
 \end{equation*} in the sense of distributions on $\R_+\times \RR$.
\end{proposition}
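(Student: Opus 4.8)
The plan is to build everything on Lemma~\ref{lemma:radial} together with the uniform kinetic energy bound of Corollary~\ref{coro:uni}. First I would invoke Lemma~\ref{lemma:radial} to extract a subsequence $(f_{\eps_{n_k}})$ converging weak-$\ast$ in $L^\infty(\R_+,\mathcal{M}(\RR\times\RR))$ to a limit $f$ satisfying $\nabla_v\cdot(v^\perp f)=0$ in $\mathcal{D}'(\R_+^\ast\times\RR\times\RR)$. Since the field $v\mapsto v^\perp$ is the infinitesimal generator of the rotation flow $\Theta_s\colon(t,x,v)\mapsto(t,x,R_sv)$, this divergence-free identity is equivalent to the invariance $(\Theta_s)_\# f=f$ for every $s$, that is, $f$ is invariant under rotations of the velocity variable; this is precisely the meaning of the notation $f=f(t,x,|v|)$. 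To obtain $\rho=\int f\,dv$ I would test $f_{\eps_{n_k}}$ against $\psi(x)\chi_R(v)$ with $\chi_R$ a velocity cutoff, use the bound $\sup_t\iint|v|^2 f_\eps\,dx\,dv\le C$ to control the tail $\iint\psi(x)(1-\chi_R(v))\,f_{\eps_{n_k}}\,dx\,dv\le C\|\psi\|_\infty R^{-2}$ uniformly in $k$, pass to the weak-$\ast$ limit and then let $R\to+\infty$; comparing with $\rho_{\eps_{n_k}}\to\rho$ from Proposition~\ref{prop:compactness} yields $\rho=\int f\,dv$.

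For the angular defect, the idea is to record the angular distribution of the weighted measure $|v|^2 f_{\eps_n}$. I would introduce the measures $\mu_{\eps_n}$ on $\RR\times\mathbb{S}^1$ obtained, for each $t$, as the push-forward of $|v|^2 f_{\eps_n}(t,x,v)\,dx\,dv$ under $(x,v)\mapsto(x,v/|v|)$: for $g\in C_0(\RR\times\mathbb{S}^1)$,
$$\int_{\RR\times\mathbb{S}^1} g\,d\mu_{\eps_n}(t)=\iint_{\RR\times\RR} g\big(x,\tfrac{v}{|v|}\big)|v|^2 f_{\eps_n}(t,x,v)\,dx\,dv.$$
The factor $|v|^2$ annihilates any contribution at $v=0$, so the push-forward is well defined, and the energy bound gives $\|\mu_{\eps_n}(t)\|_{\mathcal{M}}\le C$ uniformly in $t$ and $n$, so $(\mu_{\eps_n})$ is bounded in $L^\infty(\R_+,\mathcal{M}(\RR\times\mathbb{S}^1))$; extracting a further subsequence, Banach--Alaoglu produces a weak-$\ast$ limit $\mu$. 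Defining likewise $\mu^f$ as the push-forward of $|v|^2 f$, which is a fixed element of $L^\infty(\R_+,\mathcal{M}(\RR\times\mathbb{S}^1))$ since $\int|v|^2\,df\le\liminf_k\int|v|^2\,df_{\eps_{n_k}}\le C$ by weak-$\ast$ lower semicontinuity, I would set $\nu_0:=\mu-\mu^f$.

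With this definition the verification is direct: for $\psi\in C_c^\infty(\R_+\times\RR)$ and $\Phi\in C(\mathbb{S}^1)$ the push-forward identity gives $\langle\mu_{\eps_{n_k}},\psi\Phi\rangle=\int\psi(t,x)\int f_{\eps_{n_k}}\Phi(v/|v|)|v|^2\,dv\,dx\,dt$, which converges to $\langle\mu,\psi\Phi\rangle$; subtracting the exact identity $\langle\mu^f,\psi\Phi\rangle=\int\psi(t,x)\int f\,\Phi(v/|v|)|v|^2\,dv\,dx\,dt$ yields precisely the claimed distributional convergence of $\int(f_{\eps_{n_k}}-f)\Phi(v/|v|)|v|^2\,dv$ to $\int_{\mathbb{S}^1}\Phi\,d\nu_0$. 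The two particular cases then follow by taking $\Phi(\theta)=\theta_1\theta_2$ and $\Phi(\theta)=\theta_2^2-\theta_1^2$, using $v_1v_2=|v|^2\theta_1\theta_2$ and $v_2^2-v_1^2=|v|^2(\theta_2^2-\theta_1^2)$ together with the vanishing of the radial contribution, $\int_{\mathbb{S}^1}\theta_1\theta_2\,d\theta=\int_{\mathbb{S}^1}(\theta_2^2-\theta_1^2)\,d\theta=0$. The main obstacle is conceptual rather than computational: the weight $|v|^2\Phi(v/|v|)$ does not belong to $C_0$ (it grows at infinity, and $\Phi(v/|v|)$ is discontinuous at the origin without the $|v|^2$ factor), so the plain weak-$\ast$ convergence $f_{\eps_n}\to f$ cannot be applied to it; the content of the proposition is exactly that the kinetic energy concentrating at large velocities is captured, together with its angular profile, by the single bounded measure $\nu_0$. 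The two delicate points are therefore the radial rigidity of $f$ extracted from the distributional identity, and securing the weak-$\ast$ compactness of the push-forwards $\mu_{\eps_n}$ uniformly in time via the energy bound.
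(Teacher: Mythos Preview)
Your proposal is correct and follows essentially the same route as the paper, which simply invokes Lemma~\ref{lemma:radial} and refers to the argument of Golse--Saint-Raymond \cite[Theorem~A, pp.~802--803]{golse-sr} without giving details. Your write-up in fact spells out that argument: the rotational invariance of the limit $f$ from $\nabla_v\cdot(v^\perp f)=0$, the push-forward of $|v|^2 f_{\eps_n}$ to $\RR\times\mathbb{S}^1$ with compactness supplied by the kinetic-energy bound, and the identification $\nu_0=\mu-\mu^f$ together with the vanishing of the radial contribution for the two distinguished test functions.
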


\begin{proof} In view of Lemma \ref{lemma:radial}, which extends \cite[Lemma 3.3]{golse-sr}, we may argue exactly as in the 
 beginning of the proof of \cite[Theorem A]{golse-sr} (pages 802--803) to find a measure $\nu_0$ satisfying the previous properties. We do not provide the details here.

 \end{proof}

\subsection{Proofs of Theorems \ref{thm:main} and Theorem \ref{thm:main-structure} completed} We consider the subsequence $(\rho_{\eps_{n_k}})$ of $(\rho_{\eps_{n}})$, which we still denote
by $(\rho_{\eps_{n}})$ for simplicity. In order to prove Theorem \ref{thm:main} we have to pass to the limit in the weak formulation 
given by Proposition \ref{prop:weak-formulation-3}. Let $\Phi$ be a test function and let $t\geq 0$.
On the one hand, the compactness statements of Proposition \ref{prop:compactness} directly imply that
\begin{equation*}\begin{split}
 &\int_{\RR}\Phi(t,x)\rho_{\eps_n}(t,x)\,dx+ \Phi(t,\xi_{\eps_n}(t))-\int_{\RR}\Phi(0,x)\rho_{\eps_n}(0,x)\,dx-\Phi(0,\xi_{\eps_n}(0))\\
 &-\int_0^t \int_{\RR}\partial_t\Phi(s,x) \rho_{\eps_n}(s,x)\,ds\,dx-\int_0^t \partial_t \Phi(s,\xi_{\eps_n}(s))\,ds
\end{split}
\end{equation*}
converges to 
\begin{equation*}\begin{split}
 &\int_{\RR}\Phi(t,x)\rho(t,x)\,dx+\Phi(t,\xi(t))-\int_{\RR}\Phi(0,x)\rho(0,x)\,dx-\Phi(0,\xi(0))\\
 &-\int_0^t \int_{\RR}\partial_t\Phi(s,x) \rho(s,x)\,ds\,dx-\int_0^t \partial_t \Phi(s,\xi(s))\,ds
\end{split}
\end{equation*}
as $n$ tends to $\infty$.

We turn now to the nonlinear terms in Proposition \ref{prop:weak-formulation-3}. The sequence $(\rho_{\eps_n})$ is uniformly bounded in $L^\infty(\R_+,\mathcal{M}(\R^2))$ and
it satisfies the non-concentration property of Proposition \ref{prop:delort-2}. It was proved 
in \cite{Delort} (see also \cite{majda-93,Schochet}) that this, together with the  convergence of $(\rho_{\eps_n})$ to $\rho$, implies the convergence
of $\int \mathcal{H}_{\Phi(s,\cdot)}[\rho_{\eps_n}(s),\rho_{\eps_n}(s)]\,ds$ to $\int \mathcal{H}_{\Phi(s,\cdot)}[\rho(s),\rho(s)]\,ds$. By Proposition \ref{prop:delort-2}, this also yields the convergence of
$\int \mathcal{H}_{\Phi(s,\cdot)}[\rho_{\eps_n}(s),\delta_{\xi_{\eps_n}}(s)]\,ds$ to $\int \mathcal{H}_{\Phi(s,\cdot)}[\rho(s),\delta_{\xi}(s)]\,ds$
(this is done in the proof of (25) in \cite{miot-parme}).

We finally handle the last terms of Proposition \ref{prop:weak-formulation-3}. By virtue of Proposition \ref{prop:radial-terms} we already know that
\begin{equation*}\begin{split}
 \int_0^t \int_{\RR}&\left(D\nabla^\perp\Phi(s,x):\int_{\RR}v\otimes v f_{\eps_n}(s,x,v)\,dv\right)\,dx\,ds
%& =\int_0^t \int_{\RR} \left(
%(\partial_1^2-\partial_2^2)\Phi(s,x)\int_{\RR}v_1v_2f_{\eps_n}(s,x,v)\,dv+\partial_{1}\partial_2\Phi(s,x)\int_{\RR}(v_2^2-v_1^2)f_{\eps_{n}}(s,x,v)\,dv\right)\,dx\,ds
\end{split}
\end{equation*}
converges to
\begin{equation*}
\int_0^t \int_{\RR}\left(D\nabla^\perp\Phi(s,x):\int_{\mathbb{S}^1}\theta\otimes \theta\,d\nu_0(s,x,\theta)\right)\,dx\,ds.
\end{equation*}
Moreover, since $(\eta_{\eps_n})$ is uniformly bounded, there exists $\alpha$ and $\beta$ in $L^\infty(\R_+,\R)$ 
such that, after extracting a subsequence (still denoted in the same way), $\eta_{\eps_n,1}\eta_{\eps_n,2}$ converges to $\alpha$ and $\eta_{\eps_n,2}^2-\eta_{\eps_n,1}^2$ 
to $2\beta$ in $L^\infty(\R_+)$ weak - $\ast$. But
$D\nabla^\perp \Phi(s,\xi_{\eps_n}(s))$ converges to  $D\nabla^\perp \Phi(s,\xi(s))$ locally uniformly on $\R_+$, so
$$\int_0^t \eta_{\eps_n}(s)\cdot [D\nabla^\perp\Phi(s,\xi_{\eps_n}(s))\: \eta_{\eps_n}(s)]\,ds$$ converges to
$$\int_0^t  (\partial_{11}-\partial_{22})\Phi(s,\xi(s))\alpha(s)+2\partial_{12} \Phi(s,\xi(s))\beta(s)\,ds.$$
Considering the measure
$$d\nu(t,x)=\int_{\mathbb{S}^1}\theta\otimes \theta d\nu_0(t,x,\theta)+\begin{pmatrix} -\beta(t) & \alpha(t) \\ \alpha(t) & {\beta(t)}\end{pmatrix}\delta_{\xi(t)}\in L^\infty(\R_+,\mathcal{M}(\RR)),$$
we conclude the proof.

\section{Proof of Theorem  \ref{thm:main-bis}}

We  begin with the derivation of the first equation for $\rho$.
Let $\eta:\R_+ \to [0,1]$ be smooth such that $\eta$ vanishes on $[0,1]$ and $\eta=1$ on $[2,+\infty)$ and set $\eta_\delta=\eta(\cdot/\delta)$, which converges to $1$ almost everywhere.
 
 Let $\Phi$ be a test function and
$$\Phi_\delta(t,x)=\Phi(t,x)\eta_\delta\left(|x-\xi(t)|\right),$$
so that $\Phi_\delta(t,\xi(t))=\partial_t \Phi_\delta(t,\xi(t))\equiv 0$ and $\nabla \Phi_\delta(t,\xi(t))\equiv 0$.

On the one hand,  by Lebesgue's dominated convergence theorem, $\int \Phi_\delta(t,x)\rho(t,x)\,dx$  tends to $\int \Phi(t,x)\rho(t,x)\,dx$ as $\delta\to 0$.

Next, as noted in Remark \ref{rem:bounded}, we have $E\in L^\infty_\textrm{\loc}(L^\infty)$.  Moreover, as $\rho\in L^1_{\text{loc}}(L^p)$ for $p>2$ the quantity $\frac{1}{|x-\xi|})\rho$ belongs to $L^1_{\text{loc}}$. According to Proposition \ref{prop:symm} we may reexpress
the nonlinear term of \eqref{NLE} as
\begin{equation*}
\begin{split}&\mathcal{H}_{\Phi_\delta(t)}
=\int_{\RR} \left(E^\perp(t,x)+ \gamma\frac{(x-\xi(t))^\perp}{|x-\xi(t)|^2} \right)\cdot \nabla \Phi_\delta(t,x)\rho(t,x)\,dx\\
&=\int_{\RR} \left(E^\perp(t,x)+ \gamma \frac{(x-\xi(t))^\perp}{|x-\xi(t)|^2} \right)\cdot\nabla \Phi(t,x)  \eta_\delta\left({|x-\xi(t)|}\right)\rho(t,x)\,dx\\
&+\int_{\RR} E^\perp(t,x) \cdot  
\frac{x-\xi(t)}{|x-\xi(t)|}\Phi(t,x)\eta'_\delta\left({|x-\xi(t)|}\right)\rho(t,x)\,dx\\
&=I_\delta+J_\delta,
\end{split}
\end{equation*}where we have used that $a^\perp\cdot a=0$.
On the one hand,  Lebesgue's dominated convergence theorem implies that $I_\delta$ converges to
$$ \int_{\RR} \left(E^\perp(t,x)+ \gamma\frac{(x-\xi(t))^\perp}{|x-\xi(t)|^2} \right)\cdot\nabla \Phi(t,x) \rho(t,x)\,dx$$ as $\delta \to 0$.
On the other hand, we have by H\"older's inequality
\begin{equation*}
 \begin{split}
  |J_\delta|&\leq \frac{C}{\delta}
  \int_{|x-\xi(t)|\leq 2 \delta} |E(t,x)||\rho(t,x)|\,dx
  \leq  \frac{C}{\delta}\|E(t)\|_{L^\infty}\|\rho(t)\|_{L^p}\delta^{2-\frac{2}{p}},\end{split}
\end{equation*}
so $J_\delta$ vanishes in the limit $\delta\to 0$.

Finally, we compute
\begin{equation*}
 \begin{split}
\partial_t \Phi_\delta (t,x)&=\partial_t \Phi(t,x)  \eta_\delta\left(|x-\xi(t)|\right)+ \Phi(t,x) \frac{1}{ \delta}\eta'\left(|x-\xi(t)|\right)\dot{\xi}(t)\cdot \frac{\xi(t)-x}{|x-\xi(t)|}
 \end{split}
\end{equation*}and using that $|\dot{\xi}(t)|\leq C$ we find as above that the integral $\int \partial_t \Phi_\delta(t,x)\rho(t,x)\,dx$ converges to $\int \partial_t \Phi(t,x)\rho(t,x)\,dx$ as 
$\delta \to 0$. Therefore, we have proved that $\rho$ satisfies the first equation of \eqref{syst:VW} in the sense of distributions. Inserting this equation in \eqref{NLE} for 
any function $\Phi$ not necessarily vanishing near $\xi(t)$, 
we infer that
$$\frac{d}{dt}\Phi(t,\xi(t))= \partial_t \Phi(t,\xi(t))+  E^\perp(t,\xi(t))\cdot \nabla \Phi(t, \xi(t)),$$
which yields the second equation for $\xi$.
\medskip

\noindent \textbf{Acknowledgments} {During the preparation of this worh the author has been 
partially supported by the French ANR projects SchEq ANR-12-JS-0005-01, GEODISP ANR-12-BS01-0015-01, and INFAMIE ANR-15-CE40-01.}


\begin{thebibliography}{99}




%\bibitem{ambrosio-trevisan} L. Ambrosio and D. Trevisan, \emph{Lecture notes on the DiPerna-Lions theory on transport equations in abstract measure spaces} (2015), preprint.


\bibitem{Arsenev}  A. A. Arsenʹev, \emph{Existence in the large of a weak solution of Vlasov's system of equations} (Russian), \u{Z}. Vy\u{c}isl. Mat. i Mat. Fiz. \textbf{15} (1975), 136--147, 276.

\bibitem{barre-chiron-masmoudi} J. Barr\'e, D. Chiron, T. Goudon and N. Masmoudi, \emph{From Vlasov-Poisson and Vlasov-Poisson-Fokker-Planck Systems to Incompressible Euler Equations: the case with finite charge}, preprint
arXiv:1502.07890, 2015.


\bibitem{bostan-finot-hauray} M. Bostan, A. Finot and M. Hauray, \emph{The effective Vlasov-Poisson system for strongly magnetized plasmas}, preprint arXiv:1511.00169, 2015.

\bibitem{brenier} Y. Brenier, \emph{Convergence of the Vlasov-Poisson system to the incompressible Euler
equations}, Comm. Partial Differential Equations \textbf{25} (2000), 737--754.


\bibitem{italiens-miot} S. Caprino, C. Marchioro, E. Miot and M. Pulvirenti, \emph{On the attractive plasma-charge model in 2-D},  Comm. Partial Differential Equations \textbf{37} (2012), no. 7, 1237--1272.

\bibitem{caprino-marchioro} S. Caprino and C. Marchioro, \emph{On the plasma-charge model}, Kin. Rel. Mod. \textbf{3}, no.2 (2010), 241--254.


\bibitem{bresiliens-miot} G. Crippa, M. C. Lopes Filho, E. Miot and H. J. Nussenzveig Lopes, 
\emph{Flows of vector fields with point singularities and the vortex-wave system}, Discrete and continuous dynamical systems \textbf{5} (2016), 2405--2417.



\bibitem{Delort} J.-M. Delort, \emph{Existence de nappes de tourbillon en dimension deux} (French) [Existence of vortex sheets in dimension two], J. Amer. Math. Soc. \textbf{4} (1991), no. 3, 553--586.



\bibitem{filbet-rodrigues} F. Filbet and L. M. Rodrigues,
\emph{Asymptotically stable particle-in-cell methods for the Vlasov-Poisson system with a strong external magnetic field}, 
SIAM J. Numer. Analysis (2016). 

\bibitem{frenod-sonnendrucker-98} E. Fr\'enod and E. Sonnendr\"ucker, \emph{Homogenization of the Vlasov equation and of the Vlasov-Poisson system with a strong external magnetic field}, Asymptot. Anal. \textbf{18} (1998), no. 3-4, 
193--213.

\bibitem{frenod-sonnendrucker-99}  E. Fr\'enod and E. Sonnendr\"ucker, \emph{Long time behavior of the two-dimensional Vlasov equation with a strong external magnetic field}, Math. Models Methods Appl. Sci. \textbf{10} (2000), no. 4, 539--553.

\bibitem{frenod-sonnendrucker-01}E. Fr\'enod and E. Sonnendr\"ucker, \emph{The Finite Larmor Radius Approximation},
SIAM J. Math. Anal. \textbf{32} (2001), no. 6, 1227--1247.



\bibitem{golse-sr} F. Golse and L. Saint-Raymond, \emph{The Vlasov-Poisson system with strong 
magnetic field}, J. Math. Pures Appl. (9) \textbf{78} (1999), no. 8, 791--817.

\bibitem{golse-sr-2} F. Golse and L. Saint-Raymond, \emph{The Vlasov-Poisson system with strong magnetic
field in quasineutral regime},  Mathematical Models and Methods in Applied
Sciences \textbf{13} (2003), no. 5, 661--714.


\bibitem{han-kwan} D. Han-Kwan, \emph{The three-dimensional Finite Larmor Radius Approximation}, Asymptot. Anal. \textbf{66} (2010), no.1, 9--33.

\bibitem{hauray-nouri}  M. Hauray and A. Nouri, \emph{Well-posedness of a diffusive gyro-kinetic model},
Ann. Inst. H. Poincar\'e Anal. Non Lin\'eaire \textbf{28} (2011), no. 4, 529--550.



%\bibitem{ghn} P. Ghendrih, M. Hauray and A. Nouri, \emph{Derivation of a gyrokinetic model. Existence and uniqueness of specific stationary solution}, Kinet. Relat. Models \textbf{2} (2009), no. 4, 707--725.


\bibitem{lacave-miot} C. Lacave and E. Miot, \emph{Uniqueness for the vortex-wave system when the vorticity is constant near the point vortex},  SIAM J. Math. Anal. \textbf{41} 
(2009), no. 3, 1138--1163.



 \bibitem{LP} P. L. Lions and B. Perthame, \emph{Propagation of moments
and regularity  for the 3-dimensional Vlasov-Poisson system}, Invent.
Math. \textbf{105} (1991), 415--430.




\bibitem{majda-93} A. J. Majda, \emph{Remarks on weak solutions for vortex sheets with a distinguished sign},
Indiana Univ. Math. J.  \textbf{42} (1993), 921-939.

\bibitem{majda-bertozzi} A. J. Majda and A. L. Bertozzi, {\it Vorticity and incompressible flow}, Cambridge Texts in Applied Mathematics \text{27}. Cambridge University Press, Cambridge, 2002.


\bibitem{loeper} G. Loeper,
\emph{Uniqueness  of  the  solution  to  the  Vlasov-Poisson  system  with  bounded
density}, J. Math. Pures Appl. \textbf{86} (9)(2006), no. 1, 68--79.

\bibitem{livrejaune}C. Marchioro and M. Pulvirenti, {\it Mathematical
Theory of Incompressible Nonviscous Fluids}, Springer-Verlag, New York, 1994.

\bibitem{marchioro} C. Marchioro, \emph{On the Euler equations with a singular external velocity field},
Rend. Sem. Mat. Univ. Padova \textbf{89} (1990), 61--69.

\bibitem{marchioro-pulvirenti} C. Marchioro and M. Pulvirenti, \emph{On the vortex-wave system}

\bibitem{miot-16} E. Miot, \emph{On the gyrokinetic limit for the 2D Vlasov-Poisson system}, preprint, 2016.

\bibitem{miot-parme} E. Miot, \emph{Two existence results for the vortex-wave system},  Riv. Math. Univ. Parma \textbf{3} (2012), no. 1, 131--146.



\bibitem{ukai}  S. Okabe and T. Ukai,\emph{On classical solutions in the large in time of the two-dimensional Vlasov equation}, Osaka J. Math. \textbf{15} (1978), 245--261.


\bibitem{poupaud} F. Poupaud, \emph{Diagonal defect measures, adhesion dynamics and Euler equation}, Methods Appl. Anal. \textbf{9} (2002), no. 4, 533--561.
%

\bibitem{SR-01}L. Saint-Raymond, \emph{The gyrokinetic approximation for the Vlasov-Poisson system}, Math. Mod. Meth. Appl. Sci. \textbf{10} (9) (2000), 1305--1332.


\bibitem{SR-02} L. Saint-Raymond,  \emph{Control of large velocities in the two-dimensional gyrokinetic approximation},
J. Math. Pures Appl. (9) \textbf{81} (2002), no. 4, 379--399.



\bibitem{Schochet} S. Schochet, 
\emph{The weak vorticity formulation of the 2-D Euler equations and concentration-cancellation},
Comm. Partial Differential Equations \textbf{20} (1995), no. 5--6, 1077--1104.



\end{thebibliography}
\end{document}